\newif\ifdraft
\numberwithin{figure}{section}
\DeclareFontFamily{OMS}{rsfs}{\skewchar\font'60}
\DeclareFontShape{OMS}{rsfs}{m}{n}{<-5>rsfs5 <5-7>rsfs7 <7->rsfs10 }{}
\DeclareSymbolFont{rsfs}{OMS}{rsfs}{m}{n}
\DeclareSymbolFontAlphabet{\scr}{rsfs}
\definecolor{labelkey}{gray}{0.5}
\tikzset{commutative diagrams/arrow style=math font}
\newlength{\myarrowsize} 
\newenvironment{diagram*}[2]{%
\[%
\begin{tikzpicture}[>=cmto,baseline=(current bounding box.center),%
	to/.style={->,font=\scriptsize,cap=round},%
	into/.style={cmhook->,font=\scriptsize,cap=round},%
	onto/.style={-cmonto,font=\scriptsize,cap=round},%
	math/.style={matrix of math nodes, row sep=#2, column sep=#1,%
		text height=1.5ex, text depth=0.25ex}]%
}{%
\end{tikzpicture}%
\]%
\ignorespacesafterend%
}
 \DeclareMathOperator{\Hom}{Hom}
\DeclareMathOperator{\disc}{disc}
\DeclareMathOperator{\Kosz}{Kosz}
\DeclareMathOperator{\R}{\myR}
\newcommand{\dR}{\myR}
\newcommand{\bGr}{\mathbb{G}r}
\newcommand{\wtilde}{\widetilde}
\newcommand{\hooklongrightarrow}{\lhook\joinrel\longrightarrow}
\newcommand{\theoremref}[1]{\hyperref[#1]{Theorem~\ref*{#1}}}
\newcommand{\lemmaref}[1]{\hyperref[#1]{Lemma~\ref*{#1}}}
\newcommand{\definitionref}[1]{\hyperref[#1]{Definition~\ref*{#1}}}
\newcommand{\propositionref}[1]{\hyperref[#1]{Proposition~\ref*{#1}}}
\newcommand{\conjectureref}[1]{\hyperref[#1]{Conjecture~\ref*{#1}}}
\newcommand{\corollaryref}[1]{\hyperref[#1]{Corollary~\ref*{#1}}}
\newcommand{\exampleref}[1]{\hyperref[#1]{Example~\ref*{#1}}}
\let\old@caption\caption
\renewcommand*{\caption}[1]{%
  \setcounter{figure}{\value{equation}}%
  \stepcounter{equation}%
  \old@caption{#1}\relax%
}
\newcounter{intro}
\newtheorem{intro-conjecture}[intro]{Conjecture}
\newtheorem{intro-corollary}[intro]{Corollary}
\newtheorem{intro-theorem}[intro]{Theorem}
\newcommand{\parref}[1]{\hyperref[#1]{\S\ref*{#1}}}
\newcommand*\if@single[3]{%
  \setbox0\hbox{${\mathaccent"0362{#1}}^H$}%
  \setbox2\hbox{${\mathaccent"0362{\kern0pt#1}}^H$}%
  \ifdim\ht0=\ht2 #3\else #2\fi
  }
\newcommand*\rel@kern[1]{\kern#1\dimexpr\macc@kerna}
\newcommand*\widebar[1]{\@ifnextchar^{{\wide@bar{#1}{0}}}{\wide@bar{#1}{1}}}
\newcommand*\wide@bar[2]{\if@single{#1}{\wide@bar@{#1}{#2}{1}}{\wide@bar@{#1}{#2}{2}}}
\newcommand*\wide@bar@[3]{%
  \begingroup
  \def\mathaccent##1##2{%
    \if#32 \let\macc@nucleus\first@char \fi
    \setbox\z@\hbox{$\macc@style{\macc@nucleus}_{}$}%
    \setbox\tw@\hbox{$\macc@style{\macc@nucleus}{}_{}$}%
    \dimen@\wd\tw@
    \advance\dimen@-\wd\z@
    \divide\dimen@ 3
    \@tempdima\wd\tw@
    \advance\@tempdima-\scriptspace
    \divide\@tempdima 10
    \advance\dimen@-\@tempdima
    \ifdim\dimen@>\z@ \dimen@0pt\fi
    \rel@kern{0.6}\kern-\dimen@
    \if#31
      \overline{\rel@kern{-0.6}\kern\dimen@\macc@nucleus\rel@kern{0.4}\kern\dimen@}%
      \advance\dimen@0.4\dimexpr\macc@kerna
      \let\final@kern#2%
      \ifdim\dimen@<\z@ \let\final@kern1\fi
      \if\final@kern1 \kern-\dimen@\fi
    \else
      \overline{\rel@kern{-0.6}\kern\dimen@#1}%
    \fi
  }%
  \macc@depth\@ne
  \let\math@bgroup\@empty \let\math@egroup\macc@set@skewchar
  \mathsurround\z@ \frozen@everymath{\mathgroup\macc@group\relax}%
  \macc@set@skewchar\relax
  \let\mathaccentV\macc@nested@a
  \if#31
    \macc@nested@a\relax111{#1}%
  \else
    \def\gobble@till@marker##1\endmarker{}%
    \futurelet\first@char\gobble@till@marker#1\endmarker
    \ifcat\noexpand\first@char A\else
      \def\first@char{}%
    \fi
    \macc@nested@a\relax111{\first@char}%
  \fi
  \endgroup
}
\DeclareMathAlphabet{\smallchanc}{OT1}{pzc}%
                                 {m}{it}
\DeclareFontFamily{OT1}{pzc}{}
\DeclareFontShape{OT1}{pzc}{m}{it}%
             {<-> s * [1.100] pzcmi7t}{}
\DeclareMathAlphabet{\mathchanc}{OT1}{pzc}%
                                 {m}{it}
\newcommand{\mcA}{\mathchanc{A}}
\newcommand{\mcB}{\mathchanc{B}}
\newcommand{\mcR}{\mathchanc{R}}
\newcommand{\sA}{\mathscr{A}}
\newcommand{\sB}{\mathscr{B}}
\newcommand{\sE}{\mathscr{E}}
\newcommand{\sF}{\mathscr{F}}
\newcommand{\sG}{\mathscr{G}}
\newcommand{\sL}{\mathscr{L}}
\newcommand{\sM}{\mathscr{M}}
\newcommand{\sN}{\mathscr{N}}
\newcommand{\sO}{\mathscr{O}}
\newcommand{\sW}{\mathscr{W}}
\newcommand{\sfA}{{\sf A}}
\newcommand{\sfB}{{\sf B}}
\newcommand{\sfK}{{\sf K}}
\newcommand{\bC}{\mathbb{C}}
\newcommand{\bF}{\mathbb{F}}
\newcommand{\bG}{\mathbb{G}}
\newcommand{\bN}{\mathbb{N}}
\newcommand{\bP}{\mathbb{P}}
\newcommand{\bQ}{\mathbb{Q}}
\newcommand{\bZ}{\mathbb{Z}}
\newcommand{\frF}{\mathfrak{F}}
\newcommand{\frL}{\mathfrak{L}}
\newcommand{\frQ}{\mathfrak{Q}}
\newcommand{\frS}{\mathfrak{S}}
\newcommand{\frc}{\mathfrak{c}}
\newcommand{\fre}{\mathfrak{e}}
\newcommand{\frf}{\mathfrak{f}}
\newcommand{\frh}{\mathfrak{h}}
\newcommand{\fri}{\mathfrak{i}}
\newcommand{\frl}{\mathfrak{l}}
\newcommand{\fro}{\mathfrak{o}}
\newcommand{\frr}{\mathfrak{r}}
\newcommand{\frt}{\mathfrak{t}}
\DeclareSymbolFont{largesymbolsA}{U}{jkpexa}{m}{n}
\DeclareMathSymbol{\varprod}{\mathop}{largesymbolsA}{16}
\newcommand{\LeftEqNo}{\let\veqno\@@leqno}
\newcommand{\properideal}%
        {\subsetneq}
\newcommand{\wt}{\widetilde}
\newcommand{\leteq}{\colon\!\!\!=}
\DeclareMathOperator{\codim}{codim}
\DeclareMathOperator{\Ex}{Ex}
\DeclareMathOperator{\Gr}{{Gr}}
\DeclareMathOperator{\id}{{id}}
\DeclareMathOperator{\mor}{{Mor}}
\DeclareMathOperator{\Ob}{{Ob}}
\DeclareMathOperator{\ob}{{Ob}}
\DeclareMathOperator{\rank}{{rank}}
\DeclareMathOperator{\sgn}{{sgn}}
\DeclareMathOperator{\supp}{{supp}}
\newcommand{\factor}[2]{\left. \raise .2em\hbox{\ensuremath{#1}\vphantom{$I^d$}}
\hskip -.1em \right/ \hskip -.4em \raise -.3em\hbox{\ensuremath{#2}}}%
\newcommand\mtimes[3]{{\varprod_{#1}^{#2}}_{\raise 1ex \hbox{\scriptsize #3}}}%
\newcommand{\myR}{{\mcR\!}}
\newcommand{\kdot}{{{\,\begin{picture}(1,1)(-1,-2)\circle*{2}\end{picture}\,}}}
\def\dimcoh#1.#2.#3.{h^{#1}(#2,#3)}
\def\hypcoh#1.#2.#3.{\mathbb H_{\vphantom{l}}^{#1}(#2,#3)}
\def\loccoh#1.#2.#3.#4.{H^{#1}_{#2}(#3,#4)}
\def\dimloccoh#1.#2.#3.#4.{h^{#1}_{#2}(#3,#4)}
\def\lochypcoh#1.#2.#3.#4.{\mathbb H^{#1}_{#2}(#3,#4)}
\def\seslong#1.#2.#3.{0  \longrightarrow  #1   \longrightarrow 
 #2 \longrightarrow #3 \longrightarrow 0} 
\def\sesshort#1.#2.#3.{0
 \rightarrow #1 \rightarrow #2 \rightarrow #3 \rightarrow 0}
\def\Iff#1#2#3{
\hfil\hbox{\hsize =#1
\vtop{\noin #2}
\hskip.5cm 
\lower.5\baselineskip\hbox{$\Leftrightarrow$}\hskip.5cm
\vtop{\noin #3}}\hfil\medskip}
\newcommand{\union}\cup
\newcommand{\intersect}\cap
\newcommand{\Union}\bigcup
\newcommand{\Intersect}\bigcap
\def\myoplus#1.#2.{\underset #1 \to {\overset #2 \to \oplus}}
\newcommand{\resto}[1]{\raise -.5ex\hbox{$\vert$}_{#1}}
\def\qis{\,{\simeq}_{\text{qis}}\,}
\newcommand\noin{\noindent}
\newcommand\tightdot{\!\cdot\!}
\newcommand{\sectionsize}{} 
\newcommand{\theoremsize}{} 
\renewcommand{\subsectionautorefname}{\sectionsize\sf \subsectionautorefname}
\@ifdefinable\equationname{\let\equationname\equationautorefname}
\def\equationautorefname~#1\@empty\@empty\null{\protect{\theoremsize
    (#1\@empty\@empty\null)}}%
\@ifdefinable\AMSname{\let\AMSname\AMSautorefname}
\def\AMSautorefname~#1\@empty\@empty\null{
  ( #1\@empty\@empty\null)}%
\@ifdefinable\itemname{\let\itemname\itemautorefname}
\def\itemautorefname~#1\@empty\@empty\null{\theoremsize{%
    {#1}}\@empty\@empty\null%
}%
\newcommand{\basetheorem}[3]{%
    \newtheorem{#1}{#2}[#3]
    \newtheorem*{#1*}{#2}
    \expandafter\def\csname #1autorefname\endcsname{#2}
}%
\newcommand{\maketheorem}[3]{%
    \newaliascnt{#1}{#2}
    \newtheorem{#1}[#1]{\theoremsize #3}
    \aliascntresetthe{#1}
    \expandafter\def\csname #1autorefname\endcsname{#3}
    \newtheorem{#1*}{#3}
}%
\newcommand{\baseremark}[3]{%
    \newtheorem{#1}{#2}{#3}
    \newtheorem*{#1*}{#2}
    \expandafter\def\csname #1autorefname\endcsname{#2}
}%
\newcommand{\makeremark}[3]{%
    \newaliascnt{#1}{#2}
    \newtheorem{#1}[equation]{#3}
    \aliascntresetthe{#1}
    \expandafter\def\csname #1autorefname\endcsname{\theoremsize\sf #3}
    \newtheorem{#1*}{#3}
}%
\theoremstyle{plain}   
\theoremstyle{definition}    
\theoremstyle{plain}
\newtheorem{proposition}[prop]{Proposition}
\newtheorem{corollary}[cor]{Corollary}
\newtheorem{lemma}[lem]{Lemma}
\theoremstyle{definition}
\newtheorem{num}[defini]{}
\newtheorem{definition}[defini]{Definition}
\newtheorem{example}[theorem]{Example}
\theoremstyle{remark}
\newtheorem{remark}[rem]{Remark}
\newtheorem{subrem}[equation]{Remark}
\newtheorem{notation}[notat]{Notation}
\newtheorem{not-rem}[notr]{Notation-Remark}
\newtheorem{claim}[clam]{Claim}
\setlist[enumerate]{itemsep=3pt,topsep=3pt,leftmargin=2em,label={(\roman*)}}
\numberwithin{equation}{theorem}
\renewcommand\thesubsection{\thesection.\Alph{subsection}}
\newcommand\af{a_{\wtilde f}}
\begin{document}

\title
{Hodge sheaves underlying flat projective families}

\author{S\'andor J Kov\'acs} \address{S\'andor Kov\'acs, Department of Mathematics,
  University of Washington, Box 354350, Seattle, Washington, 98195, U.S.A}
\email{\href{mailto:skovacs@uw.edu }{skovacs@uw.edu }}
\urladdr{\href{http://sites.math.washington.edu/~kovacs/current/}
  {http://sites.math.washington.edu/~kovacs/current/}}

\author{Behrouz Taji} \address{Behrouz Taji, School of Mathematics and Statistics,
The University of New South Wales Sydney, NSW 2052 Australia}
\email{\href{mailto:b.taji@unsw.edu.au}{b.taji@unsw.edu.au}}
\urladdr{\href{http://www.maths.usyd.edu.au/u/behrouzt/}
  {http://www.maths.usyd.edu.au/u/behrouzt/}}

\thanks{S\'andor Kov\'acs was supported in part by NSF Grants DMS-1565352,
  DMS-1951376, and DMS-2100389.}

\keywords{Families of manifolds, flat projective families, variation of Hodge structures,
  Hodge sheaves, derived category of coherent sheaves, direct image sheaves, 
  hyperresolutions.}

\subjclass[2010]{14D06, 14E05, 14E30, 14D07, 14F05.}


\setlength{\parskip}{0.19\baselineskip}


\begin{abstract}
  We show that, for any fixed weight, there is a natural system of Hodge sheaves, whose
  Higgs field has no poles, arising from a flat projective family of varieties
  parametrized by a regular complex base scheme, extending the analogous classical
  result for smooth projective families due to Griffiths.  As an application, based
  on positivity of direct image sheaves, we establish a criterion for base spaces of
  rational Gorenstein families to be of general type.
  A key component of our arguments is centered around the construction of
  derived categorical objects generalizing relative logarithmic forms for smooth
  maps and their functorial properties.
\end{abstract}
  
\maketitle

\tableofcontents

\section{Introduction and main results}
\label{sect:Section1-Introduction}

In a series of seminal works \cite{Gri68I}, \cite{Gri68II}, and \cite{Gri70},
Griffiths established that a degeneration of polarized Hodge structures (of fixed
weight) in a smooth projective family $f: X\to B$ induces
\begin{enumerate}
\item\label{item:15} a flat bundle $(\mathcal V, \nabla)$ on $B$, equipped with a
\item\label{item:16} system of Hodge bundles $(\sE, \theta)$, and a
\item\label{item:17} natural analytic data defined by a harmonic metric.
\end{enumerate}

Following this discovery, these fundamental results were later successfully developed
further in two major new directions.  Through nonabelian Hodge theory,
Simpson~\cite{MR1179076} and Mochizuki~\cite{MR2310103} established topological
characterizations of \autoref{item:15}, \autoref{item:16}, and \autoref{item:17},
regardless of a geometric origin (a smooth projective family).  In a different
direction by replacing \autoref{item:15} and the Hodge filtration by filtered
holonomic $\mathcal D$-modules, Hodge modules were introduced by
Saito~\cite{MR1047415} as a generalization of variations of Hodge structures (VHS for
short) for non-smooth families. None of these two general theories will be used in
this paper.

For smooth projective families we know that the direct summands of $\sE$ are
represented by the cohomology of sheaves of relative K\"ahler forms; an
algebro-geometric datum.  From a geometric point of view the existence of
\autoref{item:16} for non-smooth families and one that is similarly of
algebro-geometric origin is of special interest,\!\footnote{See for example
  \cite{Gri84}*{II, VII}, \cite{Ste76}, \cite{Steen76}, and more recently
  \cite{Vie-Zuo03a} and \cite{Taj20}.}  cf.~\ref{ssec:sing-famil}.

Our first goal in this paper is to establish that in fact any flat family of
projective varieties gives rise to systems of Hodge sheaves (with no poles), as soon
as the base of the family is smooth. Moreover, we will see that, similar to the
smooth case, they arise from cohomology of objects---in the derived category---that
play the role of relative K\"ahler forms for non-smooth families,
cf.~\autoref{sect:Section3-DBExtension}.

\begin{theorem}\label{thm:DBI}
  Let $f:X\to B$ be a flat projective morphism of reduced complex schemes with
  connected fibers, where $B$ is a smooth complex variety. Further let $w\in\bN$,
  $0\leq w \leq \dim(X/B)$. Then, there exists a functorial system of reflexive Hodge
  sheaves $(\overline \sE = \bigoplus_{i=0}^w \overline \sE_{i}, \overline \theta)$,
  $\overline \theta: \overline \sE_{i} \to \Omega^1_B \otimes \overline \sE_{i+1}$,
  of weight $w$ on $B$.  If in addition $X$ has only rational singularities and
  $w=\dim(X/B)$, then $\overline \sE_{0}\simeq (f_*\omega_{X/B})^{**}$.
\end{theorem}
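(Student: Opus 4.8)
The plan is to produce $(\overline{\sE},\overline{\theta})$ directly from the derived relative forms $\underline{\Omega}^{p}_{X/B}$ of \autoref{sect:Section3-DBExtension} --- the objects playing the role of sheaves of relative K\"ahler forms for a non-smooth map --- by pushing forward, extracting cohomology sheaves and taking reflexive hulls, and to read off the Higgs field from the connecting map of the filtration of $\underline{\Omega}^{p}_{X}$ by powers of $f^{*}\Omega^{1}_{B}$, exactly as in Griffiths' original argument. On the smooth --- hence normal --- variety $B$ I set
\[
\overline{\sE}_{i}\ :=\ \Bigl(\cH^{i}\bigl(\myR f_{*}\,\underline{\Omega}^{w-i}_{X/B}\bigr)\Bigr)^{**}\qquad(0\le i\le w),
\]
the reflexive hull of the $i$-th cohomology sheaf of $\myR f_{*}\,\underline{\Omega}^{w-i}_{X/B}$; these are coherent and reflexive. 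For the Higgs field I use the exact triangle
\[
f^{*}\Omega^{1}_{B}\otimes\underline{\Omega}^{w-i-1}_{X/B}\ \longrightarrow\ \underline{\Omega}^{w-i}_{X}\langle 1\rangle\ \longrightarrow\ \underline{\Omega}^{w-i}_{X/B}\ \overset{+1}{\longrightarrow}
\]
supplied by \autoref{sect:Section3-DBExtension} --- the derived analogue of $f^{*}\Omega^{1}_{B}\otimes\Omega^{w-i-1}_{X/B}\to\Omega^{w-i}_{X}/F^{2}\Omega^{w-i}_{X}\to\Omega^{w-i}_{X/B}$ for smooth $f$. Applying $\myR f_{*}$, passing to $\cH^{i}$, using the projection formula ($\Omega^{1}_{B}$ is locally free) and the natural map of the second factor to its double dual, the connecting homomorphism becomes a morphism $\cH^{i}\bigl(\myR f_{*}\underline{\Omega}^{w-i}_{X/B}\bigr)\to\Omega^{1}_{B}\otimes\overline{\sE}_{i+1}$; since the target is reflexive, it factors through $\overline{\sE}_{i}$, giving $\overline{\theta}_{i}\colon\overline{\sE}_{i}\to\Omega^{1}_{B}\otimes\overline{\sE}_{i+1}$.

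It then remains to check that $\overline{\theta}=\bigoplus_{i}\overline{\theta}_{i}$ is a Higgs field, $\overline{\theta}\wedge\overline{\theta}=0$ in $\Omega^{2}_{B}\otimes\sHom(\overline{\sE},\overline{\sE})$, and that the construction is functorial; both I would deduce from the filtration and functoriality properties of the objects $\underline{\Omega}^{p}_{X/B}$ recorded in \autoref{sect:Section3-DBExtension} (the composite of two consecutive connecting maps is the obstruction to the existence of the corresponding three-step quotient $\underline{\Omega}^{w-i}_{X}/F^{3}$, and hence vanishes), combined with reflexivity of the $\overline{\sE}_{i}$ over the normal base, which lets these identities be pushed through $(-)^{**}$. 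Moreover, over the open locus $B^{\circ}\subseteq B$ above which $f$ is smooth, the comparison results of \autoref{sect:Section3-DBExtension} identify $\underline{\Omega}^{p}_{X/B}|_{B^{\circ}}$ with $\Omega^{p}_{X^{\circ}/B^{\circ}}$ and the displayed triangle with the classical one, so there $\overline{\sE}_{i}$ is the locally free Hodge bundle $\myRi{i}f^{\circ}_{*}\Omega^{w-i}_{X^{\circ}/B^{\circ}}$ and $\overline{\theta}$ is Griffiths' Kodaira--Spencer map; this is the sense in which the construction extends \cite{Gri70}.

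For the final assertion, assume $w=n:=\dim(X/B)$ and that $X$ has only rational singularities. Then $X$ is Du~Bois, so $\sO_{X}\to\underline{\Omega}^{0}_{X}$ is an isomorphism, and $X$ is Cohen--Macaulay, so its relative dualizing complex reduces to the sheaf $\omega_{X/B}$ in degree $0$. Feeding these two facts into the comparison between the top derived relative form and the relative dualizing complex established in \autoref{sect:Section3-DBExtension} yields $\underline{\Omega}^{n}_{X/B}\simeq\omega_{X/B}$. As $\underline{\Omega}^{n}_{X/B}$ is then a sheaf in degree $0$, the spectral sequence of the derived pushforward gives $\cH^{0}\bigl(\myR f_{*}\underline{\Omega}^{n}_{X/B}\bigr)=\cH^{0}\bigl(\myR f_{*}\omega_{X/B}\bigr)=f_{*}\omega_{X/B}$, whence $\overline{\sE}_{0}=(f_{*}\omega_{X/B})^{**}$.

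I expect the main obstacle to lie entirely in what is deferred to \autoref{sect:Section3-DBExtension}: constructing the objects $\underline{\Omega}^{p}_{X/B}$ with their functoriality, the filtration triangle producing $\overline{\theta}$, the compatibility with $\Omega^{p}_{X^{\circ}/B^{\circ}}$ over the smooth locus of $f$, and the identification $\underline{\Omega}^{n}_{X/B}\simeq\omega_{X/B}$ in the rational case --- the derived-categorical heart highlighted in the abstract. Granting those inputs, the only remaining point requiring care is that a reflexive sheaf is not determined by its restriction to a dense open subset --- only by its restriction to the complement of a codimension-two subset --- so every identification above must be made as an honest morphism on all of $X$, respectively $B$, before one passes to $(-)^{**}$, rather than being verified generically and extended.
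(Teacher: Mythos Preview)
Your construction diverges from the paper's at the decisive step, and the gap is precisely the integrability of $\overline\theta$.

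You define $\overline\sE_i=(\myR^i f_*\,\underline\Omega^{w-i}_{X/B})^{**}$ and attempt to verify $\overline\theta\wedge\overline\theta=0$ directly from the hyperfiltration. The paper does \emph{not} do this. It first sets $\sF_i:=\myR^i f_*\,\underline\Omega^{w-i}_{X/B}$ as you do, but then chooses a good resolution $\pi:\wtilde X\to X$, uses \autoref{cor:pullback} to produce a morphism of Koszul triangles, and after pushing forward obtains a map of systems
\[
\psi:(\sF,\tau)\longrightarrow(\sE^{0},\theta^{0})(a_{\wtilde f}\cdot D_{\wtilde f}),
\]
where $(\sE^{0},\theta^{0})$ is the logarithmic Hodge bundle underlying the Deligne extension of the VHS of $\wtilde f$. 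It then defines $(\overline\sE,\overline\theta):=(\mathrm{im}\,\psi)^{**}$. Integrability (and $\sO_B$-linearity) are \emph{inherited} from $(\sE^{0},\theta^{0})$, where they hold by Katz--Oda and Steenbrink. Thus the proofs of \autoref{thm:DBI} and \autoref{thm:DBI'} are deliberately intertwined: the inclusion into the Deligne extension is not an afterthought but the mechanism that makes $(\overline\sE,\overline\theta)$ a Higgs sheaf.

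Your sentence ``the composite of two consecutive connecting maps is the obstruction to the existence of the corresponding three-step quotient $\underline\Omega^{w-i}_X/F^3$, and hence vanishes'' is not correct as stated: the three-step quotient exists by construction of the hyperfiltration, and its existence does not force the composite to vanish. What one would actually need is to identify $(\id\otimes\tau_{i+1})\circ\tau_i$ composed with $\wedge:\Omega^1_B\otimes\Omega^1_B\to\Omega^2_B$ with a single connecting map coming from the $F^0/F^2$ versus $F^2/F^3$ triangle; for hyperfiltrations in the derived category (as opposed to honest filtrations of complexes) this compatibility is not addressed in \autoref{sec:rel-db-cxs} and is delicate, cf.~\autoref{rem:Neeman}. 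The paper avoids this entirely.

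A secondary issue: your $\overline\sE_i$ may differ from the paper's. Since $\overline\sE_i^{\text{paper}}=(\mathrm{im}\,\psi_i)^{**}$ rather than $\sF_i^{**}$, the two agree only if $\ker\psi_i$ is torsion, which amounts to $\psi_i$ being generically injective. For $i=0$ this is handled via \autoref{thm:log-DB}\ref{item:8}, but for $i>0$ the map $\underline\Omega^{p}_{X/B}\to\myR\pi_*\Omega^{p}_{\wtilde X/B}$ need not be a generic quasi-isomorphism when the fibres of $f$ are themselves singular.

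Finally, for the last assertion your justification is misdirected. The Du~Bois condition $\sO_X\simeq\underline\Omega^0_X$ plays no role here. What the paper uses is \autoref{thm:log-DB}\ref{item:8}, giving $\underline\Omega^{m}_{X/B}\simeq\myR\pi_*\omega_{\wtilde X/B}$, combined with $\pi_*\omega_{\wtilde X}\simeq\omega_X$ (rational singularities) to get $\sF_0\simeq f_*\omega_{X/B}$; one then checks that $\psi_0$ is injective (it is an isomorphism away from $D_{\wtilde f}$ and $\sF_0$ is torsion-free), so $\sG_0\simeq\sF_0$ and $\overline\sE_0\simeq(f_*\omega_{X/B})^{**}$.
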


\begin{subrem}
  See \autoref{def:system} and the subsequent paragraph for the definition of a
  system of reflexive Hodges sheaves and Subsection~\ref{ssec:funct} and
  \autoref{thm:funct} for the functoriality of such a system.
\end{subrem}

\noindent
Our next goal is to compare these Hodge sheaves to the logarithmic system
$(\sE^{\bf 0}, \theta^0)$ underlying the \emph{Deligne canonical extension}
\cite{Deligne70}*{I.5.4} $\mathcal V^{0}$ of integral variation of Hodge structures
of weight $w$ for a smooth model. 
Here we are following the standard parabolic notation for extensions of $\mathcal
V$. That is, for a tuple $\mathbf{\beta} = (\beta_i)_i$ of real numbers $\beta_i$,
$j: B \setminus D_{\wtilde f} \to B$ is the inclusion map and
$D_{\wtilde f} = \sum D_{\wtilde f}^i$ is the discriminant locus of $\wtilde f$
(which is assumed to be normal crossing) is defined as follows.  The sequence of
holomorphic bundles $\mathcal V^{\bf \beta}$ is the decreasing filtration of
$j_* \mathcal V$, defined by the lattice with respect to which
$\rm{res}(\nabla)|_{D_{\wtilde f}^i}$ has eigenvalues in $[\beta_i,
\beta_{i+1})$. Throughout this paper $[\beta_i, \beta_{i+1})$ is fixed to be equal
for all $i$. 
More precisely, given a {suitable resolution} $\pi: \wtilde X \to X$ and the
resulting family $\wtilde f: \wtilde X\to B$, we show that there is a nonnegative
integer $\af$, that encodes how singular the \emph{family} $f$ is and measures the
difference between $\overline \sE$ and $\sE^{0}$, where $\sE^{0}$ is the Deligne
extension of the integral VHS associated to the smooth locus of $\wtilde f$.

\begin{theorem}\label{thm:DBI'}
  In the setting of \autoref{thm:DBI}, let $\pi: \wtilde X\to X$ be a good resolution
  with respect to $f$, and denote the resulting morphism by
  $\wtilde f: \wtilde X \to B$.  Further let $D_{\wtilde f}$ denote the divisorial
  part of the discriminant locus of $\wtilde f$, and assume that it is an snc divisor
  on $B$. (This can be achieved by base changing to an embedded resolution over $B$.)

  Then, there exists an integer
  \begin{equation}
    \label{eq:13}
    0\leq \af\leq \dim (X) 
  \end{equation}
  for which we have an inclusion of systems of equal weights
  \begin{equation}\label{eq:Inclusion}
    (\overline \sE, \overline \theta) \subseteq   \big( \sE^{0} , \theta^{0}
    \big) (\af \tightdot D_{\wtilde f} )  
    \simeq_{\mathbb C^{\infty}}  \mathcal V^{-\af}  ,
  \end{equation}
  This isomorphism 
  is in the category of smooth bundles.
\end{theorem}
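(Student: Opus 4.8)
The plan is to factor the comparison through the good resolution $\pi\colon\wtilde X\to X$ and then to match relative logarithmic forms on $\wtilde X$ with Deligne's canonical extension. Write $B^{\circ}\subseteq B$ for the (open) locus over which $\wtilde f$ is smooth; its complement has $D_{\wtilde f}$ as divisorial part. Over $B^{\circ}$ the family $\wtilde f$ carries a polarized $\bZ$-variation of Hodge structure of weight $w$, and $(\sE^{0},\theta^{0})$ is, by definition, the system attached to its Deligne extension $\mathcal V^{0}$ with Hodge filtration $F^{\bullet}$, so that $\sE^{0}_{i}=\gr^{w-i}_{F}\mathcal V^{0}$. From \autoref{sect:Section3-DBExtension} and \autoref{thm:DBI} recall that $\overline\sE_{i}\simeq(R^{i}f_{*}\Om^{w-i}_{X/B})^{**}$, where $\Om^{p}_{X/B}$ are the relative Du~Bois (derived Kähler) objects and $R^{i}f_{*}$ is hyper-direct image, and that $\overline\theta$ is induced by the connecting map of the relative-to-absolute comparison of $\Om^{\bullet}_{X}$ and $\Om^{\bullet}_{X/B}$. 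Over $B^{\circ}$, \autoref{thm:DBI} identifies $(\overline\sE,\overline\theta)\big|_{B^{\circ}}$ with the weight-$w$ system of Hodge bundles of the variation carried by $\wtilde f$, hence with $(\sE^{0},\theta^{0})\big|_{B^{\circ}}$. The theorem thus reduces to three points: (i) extend this generic identification to a global inclusion of sheaves with pole order $\af$ along $D_{\wtilde f}$; (ii) check that the inclusion respects the Higgs fields; (iii) deduce the $\bC^{\infty}$ statement.

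For (i), the construction of $\Om^{p}_{X/B}$ via hyperresolutions, together with the functoriality of \autoref{thm:funct} applied to $\pi$, furnishes a canonical morphism $\Om^{p}_{X/B}\to R\pi_{*}\Omega^{p}_{\wtilde X/B}(\log E)$ that is an isomorphism over $B^{\circ}$; here $E$ is the reduced divisor $(\wtilde f^{*}D_{\wtilde f})_{\mathrm{red}}$, which is snc because the resolution is good. Applying $R^{i}f_{*}$ with $p=w-i$ (and $Rf_{*}\circ R\pi_{*}=R\wtilde f_{*}$), then passing to reflexive hulls — harmless, since $B$ is smooth and $\overline\sE_{i}$ is reflexive — produces a map $\overline\sE_{i}\to(R^{i}\wtilde f_{*}\Omega^{w-i}_{\wtilde X/B}(\log E))^{**}$ which is an isomorphism over $B^{\circ}$ and hence injective, $\overline\sE_{i}$ being torsion-free. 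The Steenbrink--Illusie comparison of the relative logarithmic de~Rham complex $\Omega^{\bullet}_{\wtilde X/B}(\log E)$, with its stupid filtration (degenerating at $E_{1}$), with the filtered bundle of the limiting mixed Hodge structure identifies $(R^{i}\wtilde f_{*}\Omega^{w-i}_{\wtilde X/B}(\log E))^{**}$ with $\gr^{w-i}_{F}\mathcal V^{\boldsymbol\beta}$ for a parabolic index $\boldsymbol\beta$ governed by the multiplicities of the components of the scheme-theoretic fibre $\wtilde f^{*}D_{\wtilde f}$ (equivalently by the semisimple parts of the local monodromies). Taking $\af$ to be a bound for this defect, so that $\boldsymbol\beta\ge-\af$ and hence $\gr^{w-i}_{F}\mathcal V^{\boldsymbol\beta}\subseteq\gr^{w-i}_{F}\mathcal V^{-\af}=(\gr^{w-i}_{F}\mathcal V^{0})(\af\tightdot D_{\wtilde f})=\sE^{0}_{i}(\af\tightdot D_{\wtilde f})$, composition gives $\overline\sE_{i}\hookrightarrow\sE^{0}_{i}(\af\tightdot D_{\wtilde f})$; the estimate $\af\le\dim X$ is read off from the good resolution together with standard bounds on the monodromy weight filtration (equivalently on the amplitude of $\Om^{p}_{X/B}$). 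If one prefers to avoid Steenbrink in the non-semistable case, the same conclusion follows after a weak semistable reduction $B'\to B$: over $B'$ the log complex computes the canonical extension on the nose, and descent to $B$ reintroduces precisely the twist by $\af\tightdot D_{\wtilde f}$.

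For (ii), $\overline\theta$ and $\theta^{0}$ are both graded pieces of a Gauss--Manin type connection — $\overline\theta$ from the relative-to-absolute comparison of Du~Bois complexes on $X$, and $\theta^{0}$ from $\nabla^{0}$ acting on $F^{\bullet}\mathcal V^{0}$ — and the comparison morphism is compatible with the relevant short exact sequences on $\wtilde X$, namely the log versions $0\to\wtilde f^{*}\Omega^{1}_{B}(\log D_{\wtilde f})\to\Omega^{1}_{\wtilde X}(\log E)\to\Omega^{1}_{\wtilde X/B}(\log E)\to0$. Hence the squares relating the Higgs fields commute over $B^{\circ}$ and, by torsion-freeness, over all of $B$; summing over $i$ yields $(\overline\sE,\overline\theta)\subseteq(\sE^{0},\theta^{0})(\af\tightdot D_{\wtilde f})$. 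Finally (iii) is formal: twisting Deligne's lattice by $\sO_{B}(\af\tightdot D_{\wtilde f})$ shifts the residue of $\nabla^{0}$ along each $D^{i}_{\wtilde f}$ by $-\af$, so $\mathcal V^{0}(\af\tightdot D_{\wtilde f})=\mathcal V^{-\af}$ already as holomorphic bundles, while a holomorphic bundle filtered by subbundles is $\bC^{\infty}$-isomorphic to the direct sum of its graded quotients; applying the latter to $F^{\bullet}$ on $\mathcal V^{0}(\af\tightdot D_{\wtilde f})$ yields the $\bC^{\infty}$-isomorphism $(\sE^{0},\theta^{0})(\af\tightdot D_{\wtilde f})\simeq_{\bC^{\infty}}\mathcal V^{-\af}$ of the theorem.

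I expect the main obstacle to be the core of (i): making the morphism $\Om^{p}_{X/B}\to R\pi_{*}\Omega^{p}_{\wtilde X/B}(\log E)$ precise within the derived-category formalism of \autoref{sect:Section3-DBExtension} — in particular controlling any torsion that could perturb the higher direct images — and then matching the resulting lattice with Deligne's canonical extension with \emph{effective} control, i.e.\ proving $\af\le\dim X$ rather than merely $\af<\infty$; this last point is exactly what is sensitive to the choice of good resolution. The other ingredients — the generic comparison, extension across a divisor via torsion-freeness, Higgs compatibility, and the $\bC^{\infty}$ identification — are either classical or formal once that morphism and that bound are secured.
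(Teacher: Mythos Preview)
Your overall shape is right, but there is a genuine gap in step~(i), rooted in a misreading of the construction. You assert that $\overline\sE_i\simeq(R^if_*\underline\Omega^{w-i}_{X/B})^{**}$, but this is not what \autoref{thm:DBI} (or its proof) provides: in the paper $\overline\sE_i$ is \emph{defined} as the reflexive hull of the \emph{image} of the comparison map $\psi_i\colon R^if_*\underline\Omega^{w-i}_{X/B}\to \sE^0_i(\af\tightdot D_{\wtilde f})$, not as the reflexivization of the source (cf.~\autoref{eq:psi} and \autoref{eq:extension}). For singular $X$ the Du~Bois--to--resolution comparison need not be generically injective, so the two can differ. Once one uses the actual definition, the inclusion \autoref{eq:Inclusion} is tautological; the entire content lies in constructing $\psi$ as a morphism of \emph{systems} with the stated twist, which is done by applying $\dR f_*$ to the morphism of Koszul triangles $\varkappa_{w-i}$ from \autoref{cor:pullback} and then identifying the target $(\sE^0,\theta^0)$ with the Hodge bundles of the Deligne extension via Katz--Oda and Steenbrink.

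Your proposed source for the twist is also different from the paper's, and this is exactly where your own flagged ``main obstacle'' comes from. You claim an untwisted morphism $\underline\Omega^p_{X/B}\to R\pi_*\Omega^p_{\wtilde X/B}(\log E)$ from functoriality and then plan to extract $\af$ from a Steenbrink--Illusie lattice comparison or from semistable reduction. But functoriality (\autoref{thm:log-DB}\autoref{item:4}) only compares relative Du~Bois forms for the \emph{same} base pair $(B,D)$; the target $\Omega^p_{\wtilde X/B}(\log E)$ is the sheaf of log forms relative to $(B,D_{\wtilde f})$, not $(B,\emptyset)$. Passing from $(B,\emptyset)$ to $(B,D_{\wtilde f})$ is precisely the step that produces the twist, and in the paper it is done purely derived-categorically via the hyperfiltration comparison of \autoref{thm:diff-bases} and \autoref{thm:pullback}. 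That construction gives the bound $\af\le\dim X$ for free (the line-bundle exponent is $n-q\le n$ in \autoref{thm:diff-bases}\autoref{item:12}), whereas your route would require an independent monodromy or multiplicity estimate that you do not supply. Your treatments of (ii) and (iii) are essentially correct and match the paper.
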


Here, a \emph{good resolution with respect to $f$} means a desingularization $\pi$
for which $\wtilde f^*D_{\wtilde f}$ has simple normal crossing (snc) support. For a
more detailed and precise definition see \autoref{def:good-hyper-res}.  In
\autoref{thm:DBI'} and in the rest of this article
$(\sE^0, \theta^0) (\af \tightdot D_{\wtilde f})$ denotes the naturally induced
system of Hodge sheaves defined by
\[
  ( \sE^0 \otimes \sO_X( \af \tightdot D_{\wtilde f} ) , \theta^0 \otimes \id).
\]
The integer $\af$ will be called the \emph{discrepancy of $f$ with respect to
  $\pi:\wtilde X\to X$}.  Note that $\af$ can be interpreted as a measure of
degeneration in the family; the smaller this integer, the milder the singularity of
the degeneration.  In particular when $f$ is smooth, we have $\af=0$.

\subsection{Functoriality}
\label{ssec:funct}
An important feature of the construction of $(\overline \sE, \overline \theta)$ in 
\autoref{thm:DBI} is its functoriality. More precisely, 
one can consider a category $\mathfrak{Fam}(n,d)$ of morphisms $f: X\to B$
as in \autoref{thm:DBI}, 
where $\dim (X)=n$ and $\dim (B) =d$,
and a category $\mathfrak{Hodge}(d, w)$ of systems of Hodge sheaves of weight $w$
(see \ref{ssect:funct} for the precise definitions). 
The system $(\overline \sE, \overline \theta)$ in \autoref{thm:DBI} then gives rise 
to a functor between these two 
categories.

\begin{theorem}\label{thm:funct}
Let $n, d, w \in \bN$. There exists a functor $\chi_w: \mathfrak{Fam}(n,d) \to \mathfrak{Hodge}(d,w)$ 
defined by $\chi_w(f:X\to B) = (B, \overline \sE, \overline \theta)$, where $(\overline \sE, \overline \theta)$ 
is the system 
in \autoref{thm:DBI}. Furthermore, for $(f:X\to B)\in \Ob(\mathfrak{Fam}(n,d))$ and 
any open subset $V\subseteq B$, we have 

$$
\chi_w(f: X\to B)|_{V} = \chi_w( f_V: X_{V} \to V ) ,
$$
where $X_V:= f^{-1}(V)$ and $f_V:= f|_{X_V}$. 

\end{theorem}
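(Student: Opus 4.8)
The plan is to read the functor $\chi_w$ off the functorial properties of the relative derived-categorical objects constructed in \autoref{sect:Section3-DBExtension}, of which the system $(\overline{\sE},\overline{\theta})$ of \autoref{thm:DBI} is the cohomological manifestation, and then to verify the categorical axioms and the restriction compatibility as essentially formal consequences. Recall that in that construction one has, on $X$, a relative object $\underline{\Omega}^{\mydot}_{X/B}$ (generalizing the sheaf of relative K\"ahler differentials) equipped with a Hodge-type filtration, that $\overline{\sE}_i$ is the reflexive hull of a suitable direct image $R^{j}f_*$ of the $i$-th graded piece, and that $\overline{\theta}$ arises as the connecting homomorphism of the short exact sequences of that filtration. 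On objects we set $\chi_w(f)=(B,\overline{\sE},\overline{\theta})$, exactly as in the statement.

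To define $\chi_w$ on a morphism $\phi$ of $\mathfrak{Fam}(n,d)$ --- a commutative square with base component $h\colon B'\to B$ and total-space component $u\colon X'\to X$, subject to the conditions of \parref{ssec:funct} --- I would invoke the functoriality of the objects of \autoref{sect:Section3-DBExtension}, which supplies a canonical morphism $Lu^*\underline{\Omega}^{\mydot}_{X/B}\to\underline{\Omega}^{\mydot}_{X'/B'}$ compatible with the Hodge filtrations. Applying $Rf'_*$ to it and precomposing with the base-change transformation $Lh^*Rf_*\to Rf'_*Lu^*$ (available since $f$ is proper), then passing to the associated graded, to the relevant cohomology sheaves, and finally to reflexive hulls, one obtains $\sO_{B'}$-linear maps $h^*\overline{\sE}_i\to\overline{\sE}'_i$; together with the canonical map $h^*\Omega^1_B\to\Omega^1_{B'}$ and the naturality of the connecting homomorphisms, these are compatible with the Higgs fields, hence assemble into a morphism $\chi_w(\phi)\colon\chi_w(f')\to\chi_w(f)$ of $\mathfrak{Hodge}(d,w)$. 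The functor axioms are then bookkeeping: identities are sent to identities because each ingredient --- the comparison morphisms of \autoref{sect:Section3-DBExtension}, the base-change transformation, $Rf'_*$, the cohomology functors, and $(-)^{**}$ --- is normalized to do so; and compatibility with composition reduces to the pseudofunctoriality of $L(-)^*$, the horizontal compatibility of base-change $2$-cells, and the composition law for the comparison morphisms, itself part of what \autoref{sect:Section3-DBExtension} establishes. For the final assertion one specializes $\phi$ to the canonical open-immersion morphism $(X_V\hookrightarrow X,\ V\hookrightarrow B)$: now $h=j\colon V\hookrightarrow B$ is an open immersion, so $Lj^*=j^*$ is exact and preserves reflexivity, the base-change transformation is an isomorphism, and the construction of \autoref{sect:Section3-DBExtension} is compatible with restriction to opens, giving $\underline{\Omega}^{\mydot}_{X/B}|_{X_V}\cong\underline{\Omega}^{\mydot}_{X_V/V}$; hence $\chi_w(\phi)$ is the tautological identification exhibiting $\chi_w(f_V)$ as $\chi_w(f)|_V$.

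The step I expect to be the main obstacle is showing that $\chi_w(\phi)$ is well defined on \emph{all} of $B'$, rather than only generically. A priori the map $h^*\overline{\sE}_i\to\overline{\sE}'_i$ is produced only over the open locus $B'_{\circ}\subseteq B'$ above which cohomology commutes with the base change along $h$ for the graded pieces --- so that $h^*R^{j}f_*\cong R^{j}f'_*u^*$ there --- and above which $Lu^*$ agrees with $u^*$ on the object of \autoref{sect:Section3-DBExtension}; moreover $(-)^{**}$ need not commute with $h^*$. The resolution is that $\overline{\sE}'_i$ is reflexive and $B'$ is smooth, so a morphism defined on the complement of a closed subset of codimension $\ge 2$ extends uniquely; the real work is therefore to check that $B'\setminus B'_{\circ}$ has codimension $\ge 2$, which is precisely where the hypotheses on morphisms in $\mathfrak{Fam}(n,d)$ (in particular on $h$, e.g.\ that the square be Cartesian with $f$ flat, or that $h$ be flat) come in, together with flatness of $f$ and of the chosen good resolution $\wtilde f$ and the codimension estimates already used in the proofs of \autoref{thm:DBI} and \autoref{thm:DBI'}. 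Once this extension principle is in place, it also streamlines the two axiom checks above, since one may then verify them over the dense open $B'_{\circ}$, where everything is an honest composition of base-change isomorphisms.
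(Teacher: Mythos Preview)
Your route diverges from the paper's, and the detour introduces the very obstacle you flag. Morphisms in $\mathfrak{Hodge}(d,w)$ are defined in \parref{ssect:funct} by a map $\sE\to\dR\gamma_*\sE'$, not by $\gamma^*\sE\to\sE'$; accordingly the paper never touches the base-change transformation $L\gamma^*\dR f_*\to\dR f'_*L\gamma'^*$. Instead it works entirely in the pushforward direction: given a square $(\gamma',\gamma)$ in $\mathfrak{Fam}(n,d)$, one picks a log resolution $\pi':\wtilde{X'}\to X'$ factoring through $\wtilde X\times_X X'$, and then the functoriality of the filtration $\bF^{\kdot}$ established in \autoref{thm:log-DB}\autoref{item:3} and \autoref{cor:pullback} yields a commutative square of Koszul triangles
\[
\xymatrix{
\underline{\Kosz}^p_{X/B}\ar[r]\ar[d]_{\varkappa_p} & \dR\gamma'_*\bigl(\underline{\Kosz}^p_{X'/B'}\bigr)(a_f\tightdot f^*D_{\wtilde f})\ar[d]^{\dR\gamma'_*\varkappa_p}\\
\dR\pi_*\underline{\Kosz}^p_{\wtilde f}(\log\Delta_{\wtilde f})\ar[r] & \dR\gamma'_*\dR\pi'_*\underline{\Kosz}^p_{\wtilde{f'}}(\log\Delta_{\wtilde{f'}})(\text{twist}).
}
\]
Applying $\dR f_*$ and using $\dR f_*\dR\gamma'_*\simeq\dR\gamma_*\dR f'_*$ gives a morphism of the diagrams \autoref{eq:ComSquare} for $f$ and $f'$, hence a map of the systems $\psi$ and $\psi'$, hence of their images $(\sG,\theta_\sG)\to\dR\gamma_*(\sG',\theta_{\sG'})$ and reflexive hulls. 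No locus $B'_\circ$ enters, and no commutation of $(-)^{**}$ with $\gamma^*$ is needed.

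Two concrete gaps in your argument: first, the functoriality supplied by \autoref{sec:rel-db-cxs} is pushforward-style (\autoref{thm:log-DB}\autoref{item:3},\autoref{item:4}), not the pullback comparison $Lu^*\underline\Omega^{\kdot}_{X/B}\to\underline\Omega^{\kdot}_{X'/B'}$ you invoke---you would have to derive the latter by adjunction and then handle the change of base separately via \autoref{thm:diff-bases}. Second, and more seriously, you omit the choice of a resolution $\pi'$ compatible with $\pi$ (factoring through $\wtilde X\times_X X'$); this compatibility is precisely what makes the square above commute and hence what lets the induced map descend to the \emph{image} systems $(\sG,\theta_\sG)^{**}=(\overline\sE,\overline\theta)$. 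Without it you only get a map $\sF\to\dR\gamma_*\sF'$, not one that respects the comparison to $(\sE^0,\theta^0)$. The restriction statement then follows because every ingredient of \autoref{eq:ComSquare} is local on $B$.
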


\subsection{Singular families of varieties with base schemes of general type}
\label{ssec:sing-famil}
Viehweg conjectured that for a projective morphism $f:X\to B$ of smooth projective
varieties $X$ and $B$ with connected fibers and $D$ denoting the (divisorial part of)
the discriminant locus of $f$, if $f$ has maximal variation, and its smooth fibers
are canonically polarized, then $(B,D)$ is of log general type, i.e., $\omega_B(D)$
is big.

This conjecture generated considerable interest and for several years. It was finally
resolved, and in fact generalized, through the culmination of work of several authors
including \cite{Kovacs96}, \cite{Kovacs00a}, \cite{Oguiso-Viehweg01},
\cite{Vie-Zuo01}, \cite{Kovacs02}, \cite{VZ02}, \cite{Kovacs03b},
\cite{KK08},\cite{KK08b}, \cite{KK10}, \cite{MR2871152}, \cite{PS15}, \cite{Taj18}
and \cite{CP16}.

In higher dimensions, the minimal model program taught us that when positivity of
canonical sheaves are involved, it is desirable to try to extend results to mildly
singular cases. So, it is natural to ask whether Viehweg's conjecture extends to
families of minimal models.  The simple answer is that the desired positivity fails
already, if one allows Gorenstein terminal singularities, arguably the mildest
possible. In particular, the conjecture fails for Lefschetz pencils,
cf.~\ref{sssec:order-poles-lefsch}.

This could be interpreted as a sign that there is no reasonable generalization of
Viehweg's conjecture to singular families. However, here we offer a potential way to
remedy the situation. Before we state that generalization, first recall that the
initial step in the proof of essentially any result connected to Viehweg's conjecture
has been a related result (which itself was a culmination of work of Fujita,
Kawamata, Koll\'ar, and Viehweg), which states that if a family of varieties of
general type has maximal variation, then the line bundle $\det f_*\omega^m_{X/B}$ is
big, i.e., has maximal Kodaira dimension. Reformulating Viehweg's conjecture in terms
of the bigness of this line bundle has the advantage that it allows one to remove the
condition that the fibers would be canonically polarized or even of general type. So,
by including this initial step of the proof in the conjecture itself one may rephrase
Viehweg's conjecture in terms of $\det f_*\omega^m_{X/B}$ being big, instead of
requiring maximal variation and that the fibers be of general type. This formulation
allows one to quantify (to some extent) the starting assumption for singular families
and ask that not only $\det f_*\omega^m_{X/B}$ be big, but that it should be big
compared to something else.

As an application of \autoref{thm:DBI'}, we show that for Gorenstein families it is
possible to obtain a result similar to Viehweg's conjecture along the lines outlined
above. This requires that we take into account how singular the family is. More
precisely, we show that if $\det f_*\omega^m_{X/B}$ is positive \emph{enough} to
balance the \emph{discrepancy} of the family (discussed above), then the base of the
family is indeed necessarily of (log) general type.

\begin{theorem}
  \label{thm:Cons}
  Let $X$ and $B$ be projective varieties and $f:X\to B$ a flat family of
  geometrically integral varieties with only Gorenstein Du~Bois singularities, such
  that $B$ is smooth and the generic fiber of $f$ has rational singularities.
  Further let $D,D'\subset B$ be effective divisors such that $D+D'= D_{\wtilde f}$
  and let $r_m:= \rank(f_*\omega^m_{X/B})$.  If
  $(\det f_*\omega^m_{X/B})( - m r_m \dim (X)\cdot D)$ is big, then $(B, D')$ is of
  log-general type.
\end{theorem}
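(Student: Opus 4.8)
The plan is to deduce this from \autoref{thm:DBI'} together with the positivity theory for direct image sheaves and Viehweg-type hyperbolicity criteria, in the style of Kov\'acs--Kebekus and Popa--Schnell. First I would fix a good resolution $\pi:\wtilde X\to X$ with respect to $f$ and replace $B$ by an embedded resolution so that $D_{\wtilde f}=\sum D^i_{\wtilde f}$ is snc; by \autoref{thm:DBI'} there is an integer $0\le \af\le\dim(X)$ and an inclusion of systems $(\overline\sE,\overline\theta)\subseteq(\sE^0,\theta^0)(\af\tightdot D_{\wtilde f})$, where $(\sE^0,\theta^0)$ is the logarithmic Higgs system underlying the Deligne canonical extension of the weight-$w$ integral VHS coming from the smooth locus of $\wtilde f$. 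Taking $w=\dim(X/B)$ and using the rational-singularities hypothesis on the generic fiber (so that the condition in the last sentence of \autoref{thm:DBI} holds over the locus where the fibers have rational singularities), we get $\overline\sE_0\simeq(f_*\omega_{X/B})^{**}$. Passing to the $m$-th symmetric/tensor construction and taking determinants, the bigness of $(\det f_*\omega^m_{X/B})(-m r_m\dim(X)\cdot D)$ becomes bigness of a sub-line-bundle of (a determinant of a piece of) the $m$-fold iterated Higgs system twisted by $-mr_m\af\tightdot D_{\wtilde f}$; the key numerical point is that $\af\le\dim(X)$, so the assumed twist by $-mr_m\dim(X)\cdot D$ with $D+D'=D_{\wtilde f}$ absorbs the $\af\tightdot D_{\wtilde f}$ correction and leaves a remainder supported on $D'$.

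\medskip
The core of the argument is then the standard Viehweg--Zuo / Popa--Schnell mechanism: a big sub-line-bundle $\sA$ of $(\sym^{r_m}\sE^0_m)^{\otimes\text{something}}(\ast\cdot D_{\wtilde f})$ that is compatible with the Higgs field produces, after iterating the Higgs field $\theta^0$ and using that it has at most logarithmic poles along $D_{\wtilde f}$, a nonzero map $\sA^{\otimes k}\to \big(\Omega^1_B(\log D_{\wtilde f})\big)^{\otimes k}\otimes(\text{bounded twist})$ for some $k$, hence a nonzero map $\sA \to \sym^k\Omega^1_B(\log D_{\wtilde f})\otimes(\text{the twist})$. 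The reflexivity statement in \autoref{thm:DBI} (the Hodge sheaves $\overline\sE_i$ are reflexive and $\overline\theta$ maps $\overline\sE_i\to\Omega^1_B\otimes\overline\sE_{i+1}$ \emph{without poles}) is what lets me control the twist: iterating $\overline\theta$ rather than $\theta^0$ costs nothing, and the inclusion \eqref{eq:Inclusion} is what transfers the global positivity from $\det f_*\omega^m_{X/B}$ (living naturally on $\overline\sE$) into the logarithmic cotangent world where the iterated Higgs field has a pole-order bound governed by $\af$. Combining these, the assumed bigness of $(\det f_*\omega^m_{X/B})(-mr_m\dim(X)\cdot D)$ yields a big sub-line-bundle of a symmetric power of $\Omega^1_B(\log D')$ (the $D$-part of the pole divisor having been spent on the discrepancy twist), and the theorem of Campana--P\u aun (or the earlier Viehweg--Zuo positivity, plus the fact that a variety whose log-cotangent bundle contains a big line bundle in a symmetric power is of log general type) gives that $(B,D')$ is of log general type.

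\medskip
I would organize the write-up as: (i) reduce to the snc situation and record the consequences of \autoref{thm:DBI}, \autoref{thm:DBI'} with $w=\dim(X/B)$; (ii) build the big sub-line-bundle of the relevant iterated Higgs bundle from the hypothesis on $\det f_*\omega^m_{X/B}$, carefully tracking that the twisting divisor that appears is bounded by $\af\tightdot D_{\wtilde f}\le \dim(X)\cdot D_{\wtilde f}$; (iii) run the iterated-Higgs-field argument to produce a nonzero sheaf map into $\sym^k\Omega^1_B(\log D')$ and conclude by Campana--P\u aun. The main obstacle I expect is step (ii): making the bookkeeping between $f_*\omega^m_{X/B}$ (an $m$-th power object) and the weight-$w$ Hodge system $\overline\sE$ (which only sees $f_*\omega_{X/B}$ directly) precise — this requires either passing through a cyclic cover associated to a section of $\omega^m_{X/B}$ to linearize the $m$-th power (Viehweg's covering trick, which must be checked to be compatible with the Du~Bois/rational-singularity hypotheses and with the good resolution), or invoking a Popa--Schnell-type statement that already packages $\det f_*\omega^m_{X/B}$ as a sub-line-bundle of a Higgs bundle with log poles bounded by the branch divisor. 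Ensuring the pole bound coming out of that construction is exactly $\le mr_m\af\cdot D_{\wtilde f}$, rather than something larger, and that it is absorbed by the hypothesis, is the delicate numerical heart of the proof; everything after that is the by-now-standard hyperbolicity machinery.
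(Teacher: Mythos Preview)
Your outline is correct and matches the paper's route; the paper deduces \autoref{thm:Cons} from a more technical injection statement (\autoref{thm:TheEnd}) and then applies Campana--P\u aun \cite{CP16}*{7.11}. Your flagged obstacle in step (ii) is resolved exactly by your first option: one passes to the $mr_ma$-fold fiber product $X^{mr_ma}\to B$ (using the Gorenstein--flat identity $\bigotimes^{r}f_*\omega_{X/B}\simeq f^r_*\omega_{X^r/B}$ to embed the determinant---this is a fiber product of the \emph{family}, not a symmetric or tensor power of the Hodge sheaf), takes a cyclic cover $Z$ of a resolution associated to a section of $\sM^m$ with $\sM=\omega_{X^{mr_ma}/B}\otimes(\det f_*\omega^m_{X/B})^{-a}$, runs the Koszul-triangle machinery on the auxiliary family $g:Z\to B$, and then iterates the Higgs field using Zuo's weak negativity of $\ker\theta^0$ to produce an injection into $(\Omega^1_B(\log D'))^{\otimes N}$ up to a pseudo-effective twist. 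One numerical correction: the discrepancy twist that enters is bounded by the dimension of the fiber-product total space (hence by $amr_m\dim X$), not by $mr_m\cdot\af$ for the original $f$; the hypothesis still absorbs it as you say.
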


\begin{rem}
  Observe that this theorem includes Viehweg's conjecture: Assuming that the $n$-dimensional 
 variety  $X$ is
  smooth and taking $D=0$. This also shows that this statement is
  stronger than Viehweg's conjecture even in the original situation. Viehweg's
  conjecture predicted that maximal variation of the family implies that the base is
  of log general type with respect to the boundary divisor chosen to be the
  codimension one part of the discrepancy locus. \autoref{thm:Cons} says that this
  can be improved: any divisor $D$ that's part of the discrepancy locus and has the
  property that $(\det f_*\omega^m_{X/B})( - m r_m n\cdot D)$ is big may be
  subtracted from the boundary divisor. In other words, if the pushforward of a
  pluricanonical sheaf is ``bigger'' than any part of the discrepancy locus, then one
  obtains that the base is of log general type with a \emph{smaller} boundary
  divisor. In the extreme case that $(\det f_*\omega^m_{X/B})( - m r_m n\cdot D_f)$
  is big, this means that the base itself has to be of general type.

  This strengthening of Viehweg's conjecture is new even in the case when $X$ is
  smooth, but in \autoref{thm:Cons} we actually allow a singular $X$.  This result
  could also be used in a reverse way to give a lower bound on discrepancy divisors
  of some families, or the discrepancy divisor of any of their resolutins. For
  instance, one obtains a bound for the notorious Lefschetz pencils.

  Notice further, that in \autoref{thm:Cons} there is no assumption on the Kodaira
  dimension of the fibers, which is another way this result is much more general than
  Viehweg's original conjecture.

  On the other end of the spectrum, \autoref{thm:Cons} implies that for every flat
  rational Gorenstein family we have the following implication: 
  \[
    \kappa\big( (\det f_*\omega^m_{X/B})( - mr_m \dim X \cdot D_{\wtilde f}) \big) =
    \dim (B)\implies \kappa(B) = \dim B.
  \]

  Finally note, that if $f:X\to B$ is a KSB-stable family, with $X$ Gorenstein and $B$
  smooth, such that the general fiber of $f$ has rational singularities, then $f$
  satisfies the assumption on the singularities in \autoref{thm:Cons} by \cite{KK10b}
  and hence \autoref{thm:Cons} applies to such families.
\end{rem}

\subsection{Hyperfiltered logarithmic forms in the derived category}
Inspired by the works of Katz-Oda~\cite{KO68} our construction of
$(\overline \sE, \overline \theta)$ in \autoref{thm:DBI} fundamentally depends on the
existence of a filtration, or more precisely the \emph{Koszul filtration}, that is
naturally available for K\"ahler forms of smooth families. In the absence of such
objects with analogous properties for singular families, we pass on to the derived
category $D^b(X)$, where an appropriate hyperfiltration $\bF$ (in the derived sense,
see \autoref{def:Koszul-filtration}) was constructed in \cite{Kovacs05a} and applied
to the complex of Deligne-Du~Bois forms,
which are objects in the bounded derived category of coherent sheaves of $X$. These
objects have similar cohomological properties to the sheaves $\Omega_X^p$ in the
smooth case (see \autoref{def:cx-db-forms}). For more details regarding the complexes
of Deligne-Du~Bois forms see \autoref{sec:rel-db-cxs}, \cite{DuBois81},
\cite{GNPP88}, \cite{Kovacs97b}*{3.1}, \cite{Kovacs-Schwede11}*{\S 4}, and
\cite{PetersSteenbrinkBook}*{7.3.1}.

For smooth projective families, through the Hodge-to-de Rham spectral sequence
degeneration, one uses holomorphicity and transversality properties of $\nabla$ to
extract an underlying system of Hodge bundles.  When $f$ is singular, in the absence
of such tools, including a filtered relative de Rham complex satisfying good
degeneration properties, analogous results cannot be similarly established by the
same methods.

To circumvent this difficulty we construct the \emph{complex of logarithmic Du Bois
  $p$-forms} $\underline\Omega^p_X (\log \Delta) \in \Ob D^b(X)$, which can be endowed
with the structure of a Koszul-type hyperfiltration $\bF_f$ using the construction in
\cite{Kovacs05a}.  Moreover, we show that for a morphism of snc pairs
$f: (X,\Delta)\to (B,D)$ (see \autoref{def:families-pairs})
$(\underline \Omega^p_X (\log \Delta), \bF^{\kdot}_f)$ is filtered quasi-isomorphic
to $(\Omega^p_X (\log \Delta), F_{K}^{\kdot})$, where $F^{\kdot}_{K}$ is the usual
Koszul filtration.  See \autoref{thm:log-DB} for details.

In \autoref{sec:rel-db-cxs}, we show that this hyperfiltration is functorial, and
using this functoriality we establish a natural filtered pullback map from
$\underline \Omega^p_X$ to $\underline \Omega^p_{\wtilde X} (\log \Delta)$, twisted
with a well-understood line bundle that encodes the singularity of the family $f$ in
terms of $\af$ (the \emph{discrepancy} of $f$ with respect to
$\pi:\wtilde X\to X$).  On the other hand, $(\sE^0, \theta^0)$ is determined by
$(\Omega^p_{\wtilde X}(\log \Delta), F^{\kdot}_K)$ by \cite{Ste76} and \cite{KO68}.
Now, the fact that, for each $0\leq p \leq \dim X/B$, the two filtered objects
$\underline\Omega^p_X$ and $\underline \Omega^p_{\wtilde X}(\log \Delta)$ are
functorially related then leads to the formation of
$(\overline\sE, \overline \theta)$ compatible with $(\sE^{\bf 0}, \theta^0)$ (in the
sense of \autoref{eq:Inclusion}), endowing the former with the structure of a system
of Hodge sheaves.


\subsection{Singularities of Higgs fields underlying VHSs of geometric origin} 
The Gauss-Manin connection $\nabla$ arising from a smooth projective family extends
to $\mathcal V^0$ with only logarithmic poles due to its integrability, as shown by
Manin \cite{Man65} and Deligne \cite{Deligne70}*{I.5.4}.  However, in general such
flat connections do not have trivial local monodromy at infinity and thus their
singularities are often not removable.  On the other hand, for a polarized VHS over a
punctured polydisk with unipotent monodromy, the Hodge filtration extends to a
holomorphic filtration of $\mathcal V^0$ by Schmid's Nilpotent Orbit Theorem
\cite{Sch73} and the results of Cattani-Kaplan-Schmid \cite{CKS}. It follows that the
poles of $(\sE^0, \theta^0)$, as a Higgs bundle, are at worst logarithmic. In fact,
at least over a smooth quasi-projective variety, and for a suitable choice of
extension, the same is true for all tame harmonic bundles \cite{MR2283665}*{22.1}.
As a direct consequence of \autoref{thm:DBI} and \autoref{thm:funct} we can show that there is always an
extension of the Higgs bundle $(\sE, \theta)$ underlying $(\mathcal V, \nabla)$ with
zero residues\footnote{In some sense this gives an optimal algebro-geometric
  realization of the fact that $\theta$ is nilpotent.}. In other words, $\theta$ has
removable singularities. We make this point more precise in the following remark.

\begin{remark}
  \label{thm:DBII}
  In the setting of \autoref{thm:DBI}, further assume that $X$ is smooth.  Let $D_f$
  denote the divisorial part of the discriminant locus of $f$ and assume that $D_f$
  and $f^*D_f$ have simple normal crossing support.  Then, for any fixed weight, the
  system $(\overline \sE, \overline \theta)$ in \autoref{thm:DBI} is an extension of
  the Hodge bundle of the same weight underlying the VHS of the smooth locus of $f$,
  that is
  $ ( \overline \sE, \overline \theta )|_{B\setminus D_f} \cong (\sE, \theta)$.
\end{remark}
  
In the context of \autoref{thm:DBII}, we call $(\overline \sE, \overline \theta)$ a
\emph{derived extension}. We note that the inclusion
  $(\overline \sE, \overline\theta) (-\af\tightdot D_f) \subseteq
  (\sE^0, \theta^0)$ guarantees that there is always a subextension of
  $(\sE^0, \theta^0)$ with vanishing residues.

  \autoref{thm:DBII} can be interpreted as providing an analytic criterion for
  detecting when a VHS is not of geometric origin (and similarly for a complex VHS in
  the sense of~\cite{MR1040197}*{p.~868}).

\begin{corollary}
  Let $B$ be a smooth complex variety, $D\subseteq B$ a simple normal crossing
  divisor, and $(\mathcal V, \nabla, \sE = \bigoplus\sE_i, \theta)$ an abstract real
  VHS on $B\setminus D$.  If the given VHS is of geometric origin, then the
  singularity of $\theta$ is removable, i.e., there exists a reflexive Hodge sheaf
  $(\sE',\theta')$ on $B$, with $\theta': \sE'\to \Omega^1_B \otimes \sE'$, such that
  $(\sE', \theta')|_{B\setminus D} \cong (\sE,\theta)$.
\end{corollary}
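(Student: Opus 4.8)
The plan is to reduce the statement to \autoref{thm:DBI}, \autoref{thm:DBI'} and \autoref{thm:DBII}, applied to a flat projective family extending a geometric realization of the given VHS. Being of geometric origin, the VHS admits such a realization: there are a smooth projective morphism $f^\circ\colon X^\circ\to B\setminus D$ and an integer $w$ so that $(\sE,\theta)$ is a direct summand, in the category of systems of Hodge bundles, of the weight-$w$ Hodge bundle $(\sE_{f^\circ},\theta_{f^\circ})$ attached to $R^w f^\circ_*\bR$; by polarizability of the ambient VHS the corresponding idempotent $\varpi$ is a morphism of VHS.

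First I would extend $f^\circ$ to a flat projective family over a large open subset of $B$. Twisting a relatively very ample sheaf by the pullback of an ample sheaf on $B$ if necessary, embed $X^\circ\hookrightarrow\bP^N_{B\setminus D}$ and let $X\subseteq\bP^N_B$ be the scheme-theoretic closure, with $f\colon X\to B$ the structure morphism; then $X$ is reduced and $f|_{B\setminus D}=f^\circ$. At the generic point $\eta$ of any component of $D$ the ring $\sO_{B,\eta}$ is a discrete valuation ring, and $X\times_B\operatorname{Spec}\sO_{B,\eta}$ is reduced and $\sO_{B,\eta}$-torsion-free, hence flat over $\sO_{B,\eta}$; therefore the non-flat locus of $f$ dominates no component of $D$, so $f$ is flat over $B\setminus Z$ for some closed $Z\subseteq B$ with $\codim_B Z\geq 2$. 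Since $B$ is smooth, a reflexive sheaf on $B\setminus Z$, and likewise the twisted systems below, extends uniquely to $B$, so it is harmless to work over $B\setminus Z$ and take reflexive hulls at the end; shrinking $Z$ and enlarging $D$ if necessary, I may further assume $D$ contains the discriminant and that a good resolution with snc discriminant exists, so that \autoref{thm:DBI} and \autoref{thm:DBI'} apply to $f$ over $B\setminus Z$.

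Next I would apply \autoref{thm:DBI} to obtain the reflexive system $(\overline\sE,\overline\theta)$ with $\overline\theta$ having no poles, and replace it by its reflexive hull on $B$. Over $B\setminus D$ the family $f$ coincides with the smooth family $f^\circ$, so combining the compatibility of $\chi_w$ with restriction to opens in \autoref{thm:funct} with \autoref{thm:DBII}---whose smoothness hypothesis is satisfied over $B\setminus D$---yields $(\overline\sE,\overline\theta)|_{B\setminus D}\cong(\sE_{f^\circ},\theta_{f^\circ})$. To split off the summand over all of $B$, I would extend $\varpi$: as a morphism of VHS it extends to Deligne's canonical extension \cite{Deligne70}, and by Schmid's nilpotent orbit theorem \cite{Sch73} and the results of Cattani--Kaplan--Schmid \cite{CKS} this extension preserves the extended Hodge filtrations, hence defines an idempotent $\varpi^0$ of the system $(\sE^0,\theta^0)$ underlying $\mathcal V^0$. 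By the inclusion $(\overline\sE,\overline\theta)\subseteq(\sE^0,\theta^0)(\af\tightdot D)$ from \autoref{thm:DBI'}, the operator $\varpi^0\otimes\id$ acts on $(\sE^0,\theta^0)(\af\tightdot D)$, and I would define $(\sE',\theta')$ to be the reflexive hull of the Higgs subsheaf $(\varpi^0\otimes\id)(\overline\sE)$ with its induced Higgs field. Then $\theta'$ has no poles, and over $B\setminus D$ one computes $(\varpi^0\otimes\id)(\overline\sE)|_{B\setminus D}=\varpi(\sE_{f^\circ})=\sE$ with Higgs field $\theta$, so $(\sE',\theta')|_{B\setminus D}\cong(\sE,\theta)$, which is the claim.

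The step I expect to be the main obstacle is the identification $(\overline\sE,\overline\theta)|_{B\setminus D}\cong(\sE_{f^\circ},\theta_{f^\circ})$: one must know that over the locus where $f$ is smooth the derived construction of \autoref{thm:DBI} reproduces Griffiths' classical Hodge bundle together with its Higgs field exactly, and not merely some extension of it. This is precisely the content of \autoref{thm:DBII}, but care is needed to apply its smoothness hypothesis only over $B\setminus D$ and to match it with the restriction property of $\chi_w$ in \autoref{thm:funct}. A secondary, more bookkeeping point is arranging the good-resolution and snc hypotheses of \autoref{thm:DBI'} after passing to $B\setminus Z$ with $\codim_B Z\geq 2$, so that $\varpi^0$ has a genuine system to act on. The remaining ingredients---torsion-freeness over the discrete valuation rings at the generic points of $D$, functoriality of the canonical extension, and extension of reflexive sheaves across codimension $\geq 2$---are standard.
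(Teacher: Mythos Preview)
Your reduction to \autoref{thm:DBI}, \autoref{thm:funct}, and \autoref{thm:DBII} is exactly the paper's approach: the corollary is stated there as a direct consequence of those results, with no separate proof given. Your compactification step (closure in $\bP^N_B$, flatness away from codimension~$\geq 2$ via torsion-freeness over the DVRs at generic points of $D$, reflexive extension across $Z$) correctly fills in what the paper leaves implicit.

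Two small points. First, since $f|_{B\setminus D}=f^\circ$ is smooth and a good resolution is an isomorphism over the smooth locus, the discriminant $D_{\wtilde f}$ is automatically contained in $D$ and hence already snc; ``enlarging $D$'' is therefore unnecessary, and if actually performed would weaken the conclusion you are after---just delete that clause. Second, your idempotent argument addresses the broader reading of ``geometric origin'' (direct summand of a geometric VHS), whereas the paper appears to intend the narrower reading in which the VHS \emph{equals} the Hodge bundle of a family; under that reading the splitting via $\varpi^0$ is superfluous and the corollary is literally the restriction identity from \autoref{thm:funct} together with \autoref{thm:DBII}. Your extension of $\varpi$ to $(\sE^0,\theta^0)$ and the check that $\theta^{\af}$ restricted to $\varpi^0(\overline\sE)$ lands in $\Omega^1_B\otimes\varpi^0(\overline\sE)$ (using that $\varpi^0$ commutes with $\theta^{\af}$ and that $\overline\theta$ already has no poles) are correct, so your more general statement also goes through.
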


\subsubsection{Order of poles for Lefschetz pencils}\label{sssec:order-poles-lefsch} 
We emphasize that Deligne extensions (or their underlying Hodge bundle) have logarithmic poles even in the case of very
mild degenerations such as Lefschetz pencils of non-hyperelliptic curves (a
particular instance of a stable family of curves).  To see this, one may use the
following observation.  Note that $f_*\omega_{X/\mathbb P^1}$ is the first graded
piece of the Hodge sheaves underlying the Deligne extension
$(\sE^{0}= \sE_1^{0} \oplus \sE_2^0, \theta^{0})$ of $\R^1f^\circ_* \bC_X$,
where $f^\circ$ denotes the smooth locus of $f$.  By the weak positivity of
$f_*\omega_{X/\bP^1}$ (see for example \cite{Viehweg95}) we know that every rank-one
direct summand $\sL_j$ in the splitting $f_*\omega_{X/\bP^1}\cong \bigoplus \sL_j$ is
nef.
Over the smooth locus of $f$, the Higgs field $\theta^0$ is locally equal to the
derivative of the period map, so by the local Torelli theorem $\theta^0\neq 0$.
Therefore, we have $\theta^0(\sL_j)\neq 0$, for some $j$.  Now if $\theta^0$ had no
poles, by applying $\theta^0$ to $\sL_j$ and using the weak negativity\footnote{A
  weakly negative sheaf is one whose dual is weakly positive (see \cite{Viehweg95}*{2.3}).}  of the kernel of
$\theta^0$, cf.~\cite{Gri84}, \cite{MR1040197} or~\cite{Zuo00}, we would get an
induced injection
\[
  \sL_j \longrightarrow   \Omega^1_{\bP^1} \otimes \sE_2^0.
\]
It follows, again from the weak negativity of $\sE_2^0$, that after taking determinants
there is an injection
\[
\sL_j^t \otimes (\det \sE_2^0)^{-1} \longrightarrow  (\Omega^1_{\bP^1})^{\otimes t},
\]
where $t=\rank(\sE_2^0)$ and $(\det \sE_2^0)^{-1}$ is nef. But this is absurd,
showing that indeed $\theta^0$ must have poles.

\subsection{Acknowledgments} The second named author owes a special debt of gratitude
to G\'abor Sz\'ekelyhidi for his support and encouragements. We are grateful to Erwan
Rousseau for his help during our joint visit to CIRM.  We would also like to thank
Geordie Williamson and the referee for helpful comments and suggestions.

\section{Preliminary definitions and notation}\label{sec:definitions-notation}

\subsection{Families of pairs}

The study of \emph{pairs} or \emph{log varieties} have led to many advances in
birational geometry and moduli theory. For the questions investigated here a simple
version of pairs will suffice, namely we will restrict to the case when the boundary
divisor is reduced.

\begin{definition}\label{def:families-pairs}
  A \emph{reduced pair} $(X,\Delta)$ consists of a normal scheme $X$ and an effective
  reduced divisor $\Delta\subset X$.
  An \emph{snc pair} is a reduced pair $(X,\Delta)$ such that $X$ is smooth and
  $\Delta$ is an snc divisor. 
  A \emph{morphism of (reduced) pairs} $f:(X,\Delta)\to (B,D)$ is a morphism
  $f:X\to B$ of normal schemes such that $\supp\Delta\supseteq f^{-1}(\supp D)$.
  Assuming that $D$ is $\bQ$-Cartier, we will use the notation
  $f^{-1}D\leteq (f^*D)_{\mathrm{red}}$ to denote the \emph{reduced preimage} of $D$.
  Using this notation the above criterion can be replaced by $\Delta\geq f^{-1}D$.
  A \emph{morphism of snc pairs} is a morphism of reduced pairs
  $f:(X,\Delta)\to (B,D)$ such that both $(X,\Delta)$ and $(B,D)$ are snc pairs.

  Consider a morphism of reduced pairs $f:(X,\Delta)\to (B,D)$ and a decomposition
  $\Delta=\Delta_v+\Delta_h$ into vertical and horizontal parts, i.e., such that
  $\codim_Bf(\Delta_v)\geq 1$ and that $f|_{\Delta_0}$ dominates $B$, for any
  irreducible component $\Delta_0\subseteq \Delta_h$. Using this decomposition, we
  call a morphism of snc pairs $f:(X,\Delta) \to (B, D)$ an \emph{snc morphism}, if
  $f$ is flat, $\Delta_v=f^{-1}D$ and $f|_{X\setminus \Delta_v}$ is smooth.
  %
  The composition of two (snc) morphisms of pairs is also a (snc) morphism of pairs.

  Further note that the term ``morphism of pairs'' does not have a standard usage and
  it may be used to refer to a somewhat different situation by other authors. We
  added the extra word ``reduced'' to remind the reader of this potential
  difference. We are still not claiming that this definition is standard. We believe
  that an established standard usage of this phrase does not exist at this time.
\end{definition}

\begin{definition}\label{def:Koszul-filtration}
  Let $f:(X,\Delta)\to (B,D)$ be an snc morphism. Then, after removing a codimension
  $2$ subset of $B$, there exists a short exact sequence of locally free sheaves,
  \[
    \xymatrix{%
      0 \ar[r] & f^*\Omega^1_B(\log D) \ar[r] & \Omega^1_X(\log \Delta) \ar[r] &
      \Omega^1_{X/B}(\log \Delta) \ar[r] & 0.  }
  \]
  For each $0\leq q\leq \dim(X/B)$ this induces a descending filtration, called the
  \emph{Koszul filtration} and denoted by $F_K^\kdot\Omega^q_X(\log \Delta)$ such
  that the associated graded quotients of the filtration satisfy that
  \begin{equation}
    \label{eq:3}
    \Gr_{F_K^{\kdot}}^j\Omega^q_X(\log \Delta)\leteq \factor {F_K^j\Omega^q_X(\log
      \Delta)}{F_K^{j+1}\Omega^q_X(\log \Delta)}\simeq f^*\Omega^{j}_B(\log B)
       \otimes \Omega^{q-j}_{X/B}(\log \Delta) .
  \end{equation}
\end{definition}

The reader is referred to \cite{GNPP88} for the definition of simplicial and cubic schemes. 
In this paper a \emph{hyperresolution} will mean a cubic scheme, all of whose 
entries are smooth schemes of finite type over $\bC$.

\begin{definition}\label{def:good-hyper-res}
  Let $(X,\Delta)$ be a reduced pair.  A \emph{good resolution} (or \emph{log
    resolution}) of $(X,\Delta)$, is a proper birational morphism of pairs
  $g:(Y,\Gamma)\to (X,\Delta)$ such that $X$ is quasi-projective, the exceptional set
  $E\leteq \Ex(g)$ of $g$ is a divisor, $\Gamma=g^{-1}_*\Delta+E$ and $(Y,\Gamma)$ is
  an snc pair.

  Let $(X,\Delta)$ be a reduced pair and $f:X\to B$ a morphism. A \emph{good
    resolution} of $(X,\Delta)$ \emph{with respect to $f$} is a good resolution
  $g:(Y,\Gamma)\to (X,\Delta)$ such that in addition to the above, $\Gamma+g^{-1}_*D$
  is also an snc divisor where $D$ is the divisorial part of the discriminant locus
  of $f\circ g$. This can be constructed the following way: let
  $g_0:(Y_0,\Gamma_0)\to (X,\Delta)$ be a (good) resolution of $(X,\Delta)$ and let
  $D_0\subseteq B$ denote the divisorial part of the discriminant locus of
  $f_0\leteq f\circ g_0$, i.e., the smallest effective reduced divisor
  $D_0\subseteq B$ such that
  $g_0\resto{Y_0\setminus f_0^{-1}D_0}: Y_0\setminus f_0^{-1}D_0\to B\setminus D_0$
  is smooth in codimension $1$. Now let
  $g_1:(Y,\Gamma+g_1^{-1}f_0^{-1}D_0)\to (Y_0,\Gamma_0+f_0^{-1}D_0)$ be a good
  resolution such that $g_1$ is an isomorphism over $Y_0\setminus f_0^{-1}D_0$ and
  let $g=g_0\circ g_1:(Y,\Gamma)\to (X,\Delta)$.

  Note that if $\Delta=\emptyset$, then we will often drop $\Gamma$ from the notation
  and just say that $g:Y\to X$ is a \emph{good resolution (with respect to $f$)}.

  A \emph{good hyperresolution} of $(X,\Delta)$, denoted by
  $\varepsilon_\kdot:(X_\kdot,\Delta_\kdot)\to (X,\Delta)$ consists of a
  hyperresolution $\varepsilon_\kdot:X_\kdot\to X$ such that for each $i\in\bN$,
  $\dim X_i\leq \dim X-i$ and for
  $\Delta_\kdot\leteq X_\kdot\setminus (X_\kdot\times_X(X\setminus\Delta))$, either
  $\Delta_i$ is an snc divisor on $X_i$, or $\Delta_i=X_i$.
\end{definition}

\subsection{Hyperfiltrations and spectral sequences}

Let $\mcA$ and $\mcB$ be abelian categories and $D(\mcA)$ and $D(\mcB)$ their derived
categories respectively. Let $\Phi:\mcA\to\mcB$ be a left exact additive functor and
assume that $\myR\Phi: D(\mcA)\to D(\mcB)$, the right derived functor of $\Phi$
exists.

\begin{definition}\cite{Kovacs05a}*{1.2.1}
  Let $\sfK\in \ob (D^b(\mcA))$ be a bounded complex. A \emph{bounded
    hyperfiltration} $\bF^\kdot\sfK$ of $\sfK$ consists of a set of objects
  $\bF^ j\sfK\in \ob (D^b(\mcA))$ for $j=l, \dots, k+1$, for some $l,k\in\bZ$ and
  morphisms
  \[
  \varphi_j\in\Hom_{D^b(\mcA)} (\bF^{j+1}K, \bF^ jK)\qquad\text{for }j=l,\dots,k,
  \] 
  where $\bF^ l \sfK \simeq \sfK$ and $\bF^ {k+1}\sfK \simeq 0$. $\bF^ jK$ will be
  denoted by $\bF^ j$ when no confusion is likely. For convenience let
  $\bF^ i\sfK=\sfK$ for $i< l$ and $\bF^ i\sfK=0$ for $i> k$.
  The \emph{$p$-th associated graded complex} of a hyperfiltration $\bF^\kdot\sfK$ is
  \[
  \bG r^ p_{\bF^\kdot\sfK}:= M(\varphi_p) ,
  \]  
  the mapping cone of the morphism $\varphi_p$.
   
  Let $\bF^\kdot\sfA$ be a hyperfiltration of the object $\sfA$ and
  $\Xi:D(\mcA)\to D(\mcB)$ a functor. Then, there is a natural hyperfiltration of
  $\Xi(\sfA)$ given by
  \[
    \bF^j(\Xi(\sfA))\leteq \Xi(\bF^j\sfA)
  \]
  for each $j\in \bZ$.  We will always consider the object $\Xi(\sfA)$ with this
  natural hyperfiltration, unless otherwise specified.

  Next let $\bF^\kdot\sfA$ and $\bF^\kdot\sfB$ be hyperfiltrations of the objects
  $\sfA$ and $\sfB$ of $D(\mcA)$ respectively. Then, a \emph{(hyper)filtered
    morphism}\footnote{Strictly speaking these morphisms should be called
    \emph{hyperfiltered}, but for simplicity we will call them \emph{filtered}.}
  between $\sfA$ and $\sfB$ is a collection of compatible morphisms
  $\bF^j\alpha:\bF^j\sfA\to\bF^j\sfB$, i.e. for each $j\in \bZ$, the diagrams
  \[
    \xymatrix@C=4em{%
      \bF^{j+1}\sfA \ar[r]^{\bF^{j+1}\alpha} \ar[d] & \bF^{j+1}\sfB \ar[d] \\
      \bF^j\sfA \ar[r]^{\bF^{j}\alpha} & \bF^{j}\sfB \\
    }
  \]
  are commutative in $D(\mcA)$. Notice that in this case these
  morphisms induce a morphism
  $\alpha^j:\bG r^j_{\bF^\kdot\sfA}\to \bG r^j_{\bF^\kdot\sfB}$, for each $j\in \bZ$.

  A filtered morphism $\alpha:\sfA\to\sfB$ is a \emph{filtered quasi-isomorphism} if
  the induced morphism
  $\alpha^j:\bG r^j_{\bF^\kdot\sfA}\overset\simeq\longrightarrow \bG
  r^j_{\bF^\kdot\sfB}$ is an isomorphism for each $j\in \bZ$. It is easy to see, and
  left to the reader, that a filtered quasi-isomorphism (of bounded complexes) is
  necessarily a quasi-isomorphism.
\end{definition}

\begin{example}
  Let $\sfA\in C(\mcA)$ be a complex of objects of the abelian category $\mcA$ and
  let $\sfA =\colon F^0\supseteq F^1 \supseteq \dots \supseteq F^r=0$ be a
  filtration of $\sfA$. Considering the induced morphisms $F^{j+1}\to F^j$ in
  $D^b(\mcA)$ defines a hyperfiltration of the object $\sfA$.
\end{example}

\section{Relative Du Bois complexes of $p$-forms}\label{sec:rel-db-cxs}

\newcommand\bbeta{a_p}%
\newcommand\felta{f}%
\newcommand\ffelta{{f'}}%
\newcommand\gelta{{fg}}%


\noin Our aim in this section is to construct, for all flat morphisms to regular base
schemes, an analogue of relative logarithmic $p$-forms for morphisms of snc pairs.  To
do so, following the construction in \cite{Kovacs05a} (reviewed in
\autoref{sec:appendix}), we will work in the derived category $D^b(X)$.  We use the
notation $\underline\Omega^p_{X/B}(\log \Delta/D)$ to denote this object for a
morphism of pairs $f:(X,\Delta)\to (B,D)$. The ``$D$" is included in the notation to
emphasize the fact that the construction depends on $D$ as well.

With hyperfiltrations playing a role here, similar to that of filtrations in an
abelian category, our first goal is to use these objects to construct a functorial
filtration of $\underline \Omega^p_{X}(\log \Delta)$ (\autoref{thm:log-DB}).  Our
next goal is to establish a connection between $\underline \Omega^p_X$ and
$\underline \Omega^p_X(\log \Delta)$, as hyperfiltered objects
(\autoref{thm:pullback}).  This is where the notion of discrepancy (as was mentioned
in the introduction) naturally appears.  Our final goal in this section is to extend
these relations to distinguished triangles arising from such hyperfiltrations
(\autoref{cor:pullback}).  The latter is of particular interest in the context of
VHSs, as we will see in \autoref{sect:Section3-DBExtension}.

\noindent
We will use the terminology, notation and conventions developed in
\autoref{sec:definitions-notation}.

\begin{definition}
  \label{def:cx-db-forms}
  Let $(X,\Delta)$ be a reduced pair (\autoref{def:families-pairs}) and
  $\varepsilon_\kdot:(X_\kdot,\Delta_\kdot)\to (X,\Delta)$ a {good hyperresolution}
  (\autoref{def:good-hyper-res}).  The \emph{logarithmic Deligne-Du~Bois complex} (or
  \emph{logarithmic DB complex} for short) of $(X,\Delta)$ is defined as
  $\underline\Omega_X^\kdot(\log \Delta)\leteq \myR\left(\varepsilon_\kdot\right)_*
  \Omega_{X_\kdot}^\kdot(\log \Delta_\kdot)$. This is an object in the bounded
  filtered derived category of coherent sheaves on $X$, and the corresponding
  filtration (induced by the filtration b\^ete on each component of $X_{\kdot}$) is
  denoted by $F^\kdot_{DB}\leteq F^\kdot_{DB}\underline\Omega_X^\kdot(\log \Delta)$.
  Both the obeject and this filtration is independent from the good hyperresolution
  used in the definition.  The associated graded objects of this filtration give rise
  to the \emph{complexes of logarithmic DB $p$-forms}:
  $\underline\Omega_X^p(\log \Delta)\leteq
  \left(\bGr^p_{F^\kdot_{DB}}\underline\Omega_X^\kdot(\log \Delta)\right)[p]$. The
  reader is referred to \cite{GNPP88}*{IV.2.1} for details on this definition and
  basic properties of these complexes.
\end{definition}

\noin We will construct relative versions of these complexes, but first we need a
notation.

\begin{definition}\label{def:log-DB-filtration-setup}
  Let $f: (X, \Delta) \to (B,D)$ be a morphism of snc pairs,
  $\Phi_X\leteq f^*\Omega^1_B(\log D)$, $\Psi_X\leteq \Omega^1_X(\log \Delta)$, and
  $\theta_X:\Phi_X\to\Psi_X$ the natural morphism induced by $f$.  Using the notation
  of \autoref{sec:appendix} (cf.~\cite{Kovacs05a}), set
  $\bF^\kdot_\felta\underline\Omega^q_{X/B} (\log\Delta)\leteq
  \bF^\kdot\!\!\bigwedge^p\!\Psi_X$ and define
  $\underline\Omega^p_{X/B} (\log\Delta/D)\leteq \underline\frQ_{\theta_X}^p$, where
  $\underline\frQ_{\theta_X}^p$ is the object constructed in \autoref{spseqegy}
  (cf.~\cite{Kovacs05a}*{2.7}).

  Next, let $f: (X, \Delta) \to (B,D)$ be a morphism of pairs and assume that $(B,D)$
  is an snc pair. I.e., do not assume that $(X,\Delta)$ is snc.  Let
  $\varepsilon_\kdot:(X_\kdot,\Delta_\kdot)\to (X,\Delta)$ be a {good
    hyperresolution}. Then, as in \autoref{def:cx-db-forms}, the logarithmic
  Deligne-Du~Bois complex of $(X,\Delta)$ is defined as
  $\underline\Omega_X^\kdot(\log \Delta)\leteq \myR\left(\varepsilon_\kdot\right)_*
  \Omega_{X_\kdot}^\kdot(\log \Delta_\kdot)$.  Using this representative define a
  filtration as follows: Let $n=\dim X$, $d=\dim B$, and for each
  $0\leq p,q \leq \dim (X/B)=n-d$, and $0\leq j\leq \dim B$, let
  \begin{equation}
    \label{eq:1}
    \bF_\felta^j\underline\Omega_X^q(\log \Delta)\leteq \myR(\varepsilon_\kdot)_*
    \bF^j\Omega_{X_\kdot}^q(\log \Delta_\kdot),
  \end{equation}
  and
  \begin{equation}
    \label{eq:2}
    \underline\Omega_{X/B}^p(\log \Delta/D)\leteq \myR(\varepsilon_\kdot)_*
    \underline\Omega_{X_\kdot/B}^p(\log \Delta_\kdot/D).
  \end{equation}
  The object $\underline \Omega^p_{X/B} (\log \Delta/D) \in D^b(X)$ will be called
  the \emph{$p^\text{th}$-relative logarithmic Deligne-Du~Bois complex of
    $f: (X, \Delta) \to (B,D)$} or simply the \emph{complex of relative logarithmic
    DB $p$-forms of $f$}.

  Next, we will prove that these objects are well-defined and satisfy a list of
  useful properties.
\end{definition}

\begin{theorem}\label{thm:log-DB}%
  Let $f: (X, \Delta) \to (B,D)$ be a 
  morphism of pairs and assume that $(B,D)$ is an snc pair.  Let $n=\dim X$ and
  $d=\dim B$. Then, for each $0\leq p,q \leq \dim (X/B)=n-d$, the objects
  $\underline\Omega^p_{X/B}(\log\Delta/D) \in \Ob D^b(X)$ and a
  the hyperfiltration $\bF_\felta^\kdot \underline\Omega^q_{X}(\log \Delta)$ satisfy
  the following properties.

  \begin{enumerate}
  \item\label{item:18} The object
    $\underline\Omega^p_{X/B}(\log\Delta/D) \in \Ob D^b(X)$ is independent from the
    good hyperresolution used in its definition. In other words, any two objects
    defined as in \autoref{def:log-DB-filtration-setup} using possibly different good
    hyperresolutions are isomorphic in $D^b(X)$.
  \item\label{item:1}
    $\bF_\felta^0\underline\Omega^q_{X}(\log \Delta)=\underline\Omega^q_{X}(\log
    \Delta)$, and $\bF_\felta^{d+1}\underline\Omega^q_{X}(\log \Delta)=0$.
  \item\label{item:8} Let $\phi:(\wt X,\wt \Delta)\to (X,\Delta)$ be a log
    resolution. Then
    \[
      \underline\Omega^{n-d}_{X/B}(\log\Delta/D)\simeq \myR\phi_*\omega_{\wt
        X/B}(\wt\Delta-(f\phi)^{*}D).
    \]
  \item\label{item:2} For each $0\leq j\leq d$,
    \[
      \bG r^j_\felta\underline\Omega^q_{X}(\log \Delta) \leteq \bG
      r^j_{\bF^\kdot_\felta}\underline\Omega^q_{X}(\log \Delta) \simeq f^*\Omega^{j}_B (\log D)  
      \otimes  \underline \Omega^{q-j}_{X/B}(\log\Delta/D)  .
    \]
  \item\label{item:3} The hyperfiltration
    $\bF_\felta^\kdot \underline\Omega^q_{X}(\log \Delta)$ is functorial in the
    following sense. Let $g:(Y,\Gamma)\to (X,\Delta)$ be a morphism of pairs such
    that $\dim Y=\dim X=n$.  Then, for each $0\leq q \leq n-d$, there exists a natural
    filtered morphism in $D^b(X)$
    \[
      \bF_\felta^\kdot\underline\Omega^q_{X}(\log \Delta) \longrightarrow \myR g_*
      \bF_\gelta^\kdot\underline\Omega^q_{Y}(\log \Gamma).
    \]
  \item\label{item:4} The formation of $\underline\Omega^p_{X/B}(\log\Delta/D)$ is
    functorial in the following sense. Let $g:(Y,\Gamma)\to (X,\Delta)$ be a morphism
    of pairs such that $\dim Y=\dim X=n$.
    Then, for each $0\leq p \leq n-d$, there exists a natural 
    morphism in $D^b(X)$,
    \[
      \underline\Omega^p_{X/B}(\log\Delta/D) \longrightarrow \myR g_*
      \underline\Omega^p_{Y/B}(\log \Gamma/D).
    \]
  \item\label{item:5} If $f: (X, \Delta)\to (B,D)$ is an snc morphism, then
    there is a natural filtered quasi-isomorphism
    \[
      \xymatrix@C2em{%
        \bF_\felta^\kdot\underline\Omega^p_{X} (\log\Delta) 
        \overset\simeq\longrightarrow
        F_{K}^\kdot\Omega^p_{X}(\log\Delta),}
    \]
    where $F_{K}^\kdot$ is the Koszul filtration
    \emph{(\autoref{def:Koszul-filtration})}.
  \end{enumerate}
\end{theorem}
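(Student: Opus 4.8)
The plan is to establish the seven properties in a logical order that exploits their interdependence, with the hyperresolution machinery of \cite{Kovacs05a} (reviewed in \autoref{sec:appendix}) doing the technical heavy lifting. First I would prove \ref{item:18}, the well-definedness: two good hyperresolutions $\varepsilon_\kdot\colon X_\kdot\to X$ and $\varepsilon'_\kdot\colon X'_\kdot\to X$ can be dominated by a common refinement, and by the standard descent argument for DB complexes (as in \cite{GNPP88}*{IV.2}, applied to the logarithmic setting componentwise), the object $\myR(\varepsilon_\kdot)_*\underline\Omega^p_{X_\kdot/B}(\log\Delta_\kdot/D)$ is independent of the choice. The key point is that on the level of smooth components, $\underline\Omega^p_{X_i/B}(\log\Delta_i/D)$ is built from the honest short exact sequence $0\to f_i^*\Omega^1_B(\log D)\to\Omega^1_{X_i}(\log\Delta_i)\to\Omega^1_{X_i/B}(\log\Delta_i/D)\to 0$ via the Koszul construction $\underline\frQ^p_{\theta_{X_i}}$ of \autoref{spseqegy}, and this construction is functorial for the face and degeneracy maps of the cubic scheme, so it glues. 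Property \ref{item:1} is then immediate from the definition \eqref{eq:1}, since the b\^ete filtration on each $\Omega^q_{X_i}(\log\Delta_i)$ induces the Koszul filtration with $F^0=$ everything and $F^{d+1}=0$ because $\Omega^1_B(\log D)$ has rank $d$.

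Next I would establish \ref{item:2}, the computation of graded pieces, since \ref{item:8} will follow from it by taking $j=d$, $q=n-d$. The associated graded of the Koszul filtration on each smooth component $X_i$ is computed by \eqref{eq:3}: $\bGr^j_{F_K}\Omega^q_{X_i}(\log\Delta_i)\simeq f_i^*\Omega^j_B(\log D)\otimes\Omega^{q-j}_{X_i/B}(\log\Delta_i/D)$. Applying $\myR(\varepsilon_\kdot)_*$ and using the projection formula (the sheaf $f^*\Omega^j_B(\log D)$ is locally free, so it pulls out of the derived pushforward) yields exactly the claimed formula, with $\underline\Omega^{q-j}_{X/B}(\log\Delta/D)$ appearing as $\myR(\varepsilon_\kdot)_*\underline\Omega^{q-j}_{X_\kdot/B}(\log\Delta_\kdot/D)$. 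For \ref{item:8}, when $j=d$ and $q=n-d$ the relative part $\underline\Omega^{n-d-(n-d)}_{X/B}$ in \ref{item:2} degenerates, and more directly: a log resolution $\phi\colon(\wt X,\wt\Delta)\to(X,\Delta)$ is itself (the first layer of) a good hyperresolution, on which the top relative logarithmic form is the honest line bundle $\omega_{\wt X/B}(\wt\Delta-(f\phi)^*D)=\det\Omega^1_{\wt X/B}(\log\wt\Delta/D)$ computed from the relative cotangent sequence; pushing forward gives the stated isomorphism. One should check this is compatible with the graded-piece description, which is a diagram chase.

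The functoriality statements \ref{item:3} and \ref{item:4} I would handle together: given $g\colon(Y,\Gamma)\to(X,\Delta)$ with $\dim Y=\dim X$, choose compatible good hyperresolutions (first resolve $Y$, then dominate the fiber product with a resolution of $X$, as in the construction in \autoref{def:good-hyper-res}), so that $g$ lifts to a morphism of cubic schemes $g_\kdot\colon Y_\kdot\to X_\kdot$ over $X$. On each component the pullback of logarithmic forms $g_i^*\Omega^q_{X_i}(\log\Delta_i)\to\Omega^q_{Y_i}(\log\Gamma_i)$ respects the Koszul/b\^ete filtration (since $g_i$ is a morphism over $B$), and these assemble, via adjunction and the functoriality of $\myR(-)_*$, into the desired filtered morphism in $D^b(X)$; taking graded pieces gives \ref{item:4}. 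Finally, \ref{item:5}: when $f$ itself is an snc morphism we may take the trivial hyperresolution $X_\kdot = X$ (a single smooth scheme), so by construction $\bF^\kdot_f\underline\Omega^p_X(\log\Delta)$ \emph{is} $F^\kdot_K\Omega^p_X(\log\Delta)$ on the nose after the identifications of \autoref{spseqegy}; the content is that the comparison map constructed functorially in \ref{item:3} (from the trivial resolution to any other) is a filtered quasi-isomorphism, which one checks on graded pieces using that $\underline\Omega^q_{X/B}(\log\Delta/D)\simeq\Omega^q_{X/B}(\log\Delta/D)$ in the snc case (this last identification is itself Property~\ref{item:8} combined with the fact that a resolution of a smooth scheme is computed by the bête filtration, i.e. the classical statement $\underline\Omega^p_{X}\simeq\Omega^p_X$ for $X$ smooth, here in the relative logarithmic form).

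The main obstacle will be \ref{item:3}, the functoriality of the hyperfiltration in $D^b(X)$: one must choose the hyperresolutions of $X$ and $Y$ compatibly so that $g$ actually lifts to a map of cubic schemes, and then verify that the resulting map on derived pushforwards is well-defined independently of all these choices (using \ref{item:18}) \emph{and} filtered. This is exactly where the derived-categorical subtleties of \cite{Kovacs05a} --- that morphisms in $D^b$ only lift up to the ambiguity of the hyperresolution --- must be controlled, and it is the linchpin for everything in \autoref{sect:Section3-DBExtension}.
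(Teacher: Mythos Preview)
Your overall strategy mirrors the paper's, but there is one genuine gap and one organizational difference that matters.

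The gap is in your argument for \ref{item:18}. You invoke ``the standard descent argument for DB complexes (as in \cite{GNPP88}*{IV.2})'' to conclude that $\underline\Omega^p_{X/B}(\log\Delta/D)$ is independent of the hyperresolution. But that descent argument works for the \emph{absolute} $\underline\Omega^p_X(\log\Delta)$ because it arises as a graded piece of the filtered de Rham complex $\underline\Omega^\kdot_X(\log\Delta)$, whose well-definedness is ultimately established via cohomological descent for the constant sheaf. The \emph{relative} object $\underline\Omega^p_{X/B}(\log\Delta/D)=\underline\frQ^p_{\theta_X}$ is not a graded piece of any such complex; it is built by iterated mapping cones as in \autoref{sec:appendix}, and there is no a priori reason two hyperresolutions give quasi-isomorphic outputs. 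Functoriality of the construction under the face maps of a common refinement gives you a comparison \emph{morphism} between the two candidates, but not that it is an isomorphism.

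The paper handles this by reversing your order: it establishes \ref{item:1}--\ref{item:5} first (for a fixed hyperresolution), and proves \ref{item:18} \emph{last}. The key input is \ref{item:8}: at the top relative degree $p=n-d$, only the $0$-th layer of the hyperresolution contributes (since $\dim X_i\leq n-i$ forces $\underline\frQ^{n-d}_{\theta_{X_i}}=0$ for $i\geq 1$), so $\underline\Omega^{n-d}_{X/B}(\log\Delta/D)\simeq\myR\phi_*\omega_{\wt X/B}(\wt\Delta-(f\phi)^*D)$, and the right-hand side is independent of the resolution by \cite{MR2918171}*{2.10} (cf.\ \cite{KollarSingsOfTheMMP}*{10.34}). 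This anchors a descending induction on $p$: given a common refinement of two hyperresolutions, the comparison morphism on $\bG r^j$ is, by \ref{item:2}, of the form $\id\otimes(\text{comparison for }\underline\Omega^{q-j}_{X/B})$; once it is an isomorphism at top degree, the commutative diagram \autoref{eq:12} of distinguished triangles forces it to be an isomorphism on each $\bF^j$, hence on each $\bG r^j$, hence on each $\underline\Omega^{q-j}_{X/B}$. Your sketch for \ref{item:8} already contains the needed ingredients; you just need to run it \emph{before} \ref{item:18} rather than after, and feed it into this induction.

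The remaining items are essentially as you describe. One small refinement: your derivation of \ref{item:4} from \ref{item:3} by ``taking graded pieces'' is correct in spirit---indeed $\underline\Omega^p_{X/B}(\log\Delta/D)\simeq\bG r^0_f\underline\Omega^p_X(\log\Delta)$ by \ref{item:2}---but the paper frames it as a descending induction on $p$, which is what guarantees that the induced map on $\bG r^0$ (a mapping cone) is well-defined and natural once the filtered morphism of \ref{item:3} is in hand.
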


\begin{remark}
  When $\Delta$ and $D$ are empty, we will suppress the ``$\log$'' term from the
  notation. In particular, we will use the notation
  \begin{gather*}
    \underline\Omega^p_{X} \leteq \underline\Omega^p_{X} (\log\emptyset) \\
    \bF^\kdot_{X/B}\underline\Omega^p_{X}\leteq \bF_\felta^\kdot
    \underline\Omega^q_{X}(\log \emptyset), \text{ and} \\
    \underline\Omega^p_{X/B} \leteq \underline\Omega^p_{X/B}
    (\log\emptyset/\emptyset).
  \end{gather*}
\end{remark}



\begin{notation}\label{not:not-for-Gr}
  To avoid cumbersome notation, as in \autoref{thm:log-DB}\autoref{item:2}, we will
  use $\bG r^p_f$ to denote $\bG r^p_{\bF_f^{\kdot}}$, where $\bF^{\kdot}_f$ is the
  hyperfiltration $\bF^{\kdot}_f \underline \Omega^q(\log \Delta)$ in
  \autoref{thm:log-DB}.
\end{notation}

\begin{proof}[Proof of \autoref{thm:log-DB}]
  %
  First, assume in addition that $(X,\Delta)$ is also an snc pair.  Then the
  statements \autoref{item:1} and \autoref{item:2} follow from \autoref{spseqegy},
  and \autoref{item:3} follows from \cite{Kovacs05a}*{4.1}.  For \autoref{item:8},
  first observe that both sides are independent of the choice of $\phi$. This follows
  from \cite{DuBois81}*{6.3} for the left hand side and from \cite{MR2918171}*{2.10}
  (cf.~\cite{KollarSingsOfTheMMP}*{10.34}) for the right hand side. In particular, in
  the snc case we may use $\phi=\id$, and in that case \autoref{item:8} follows from
  \autoref{def:Q}.
  
  Next, let $(X,\Delta)$ be arbitrary and let
  $\varepsilon_\kdot:(X_\kdot,\Delta_\kdot)\to (X,\Delta)$ be a {good
    hyperresolution}.  Using \autoref{def:log-DB-filtration-setup}, \autoref{item:1},
  \autoref{item:8}, \autoref{item:2}, and \autoref{item:3} follow from the snc case
  above: \autoref{item:1} follows directly, \autoref{item:8} follows from the snc
  case, the definition of a good hyperresolution, \autoref{def:good-hyper-res}, and
  \autoref{eq:2}. Item \autoref{item:3} follows by the functoriality of the snc
  case. For \autoref{item:2}, further note that as $(B,D)$ is an snc pair,
  $f^*\Omega_B^{q-j}(\log D)$ is locally free, so one can use the projection formula.

  Statement \autoref{item:4} follows by a descending induction on $p$. The induction
  can be started by \autoref{item:1} and the inductive step follows from
  \autoref{item:2} and \autoref{item:3}. Indeed, choose a good hyperresolution
  $\mu_\kdot:(Y_\kdot,\Gamma_\kdot)\to (Y,\Gamma)$, which is compatible with the
  chosen good hyperresolution of $(X,\Delta)$, i.e., there is a commutative
  diagram
  \[
    \xymatrix{%
      (Y_\kdot,\Gamma_\kdot) \ar[d]_{\mu_\kdot} \ar[r]^-{g_\kdot} &
      (X_\kdot,\Delta_\kdot)      \ar[d]^{\varepsilon_\kdot}  \\
      (Y,\Gamma) \ar[r]_g & (X,\Delta).  }
  \]
  Then, the following diagram is commutative by \autoref{item:3}:
  \begin{equation}
    \label{eq:12}
    \begin{aligned}
      \xymatrix{%
        \bF^{j+1}_\felta\underline\Omega^p_{X} (\log \Delta) \ar[d] \ar[r] &
        \bF^j_\felta\underline\Omega^p_{X} (\log \Delta) \ar[d] \ar[r] &
        \bG r^j_\felta\underline\Omega^p_{X} (\log \Delta) \ar[r]^-{+1} & \\
        \myR g_* \bF^{j+1}_\gelta\underline\Omega^p_{Y} (\log \Gamma) \ar[r] & \myR
        g_*\bF^j_\gelta\underline\Omega^p_{Y} (\log \Gamma) \ar[r] & \myR g_*\bG
        r^j_\gelta\underline\Omega^p_{Y} (\log \Gamma) \ar[r]^-{+1} &, }
    \end{aligned}
  \end{equation}
  and hence induces a compatible natural morphism
  $\bG r^j_\felta\underline\Omega^p_{X} (\log \Delta)\to \myR g_*\bG
  r^j_\gelta\underline\Omega^p_{Y} (\log \Gamma)$.  The already proven
  \autoref{item:2} implies that
  $\underline\Omega^p_{X/B}(\log\Delta/D)\simeq \bG r^0_\felta\underline\Omega^p_{X}
  (\log \Delta)$ and
  $\underline\Omega^p_{Y/B}(\log \Gamma/D)\simeq \bG r^0_\felta\underline\Omega^p_{Y}
  (\log \Gamma)$. This finishes the proof of \autoref{item:4}, and then
  \autoref{item:5} follows from the construction of $\underline\frQ_{\theta_X}^p$
  carried out in this case (cf.~\autoref{sec:appendix}, especially \autoref{def:Q},
  and \cite{Kovacs05a}*{\S 2}). The main point is that the cokernel of the morphism
  $f^*\Omega^1_B(\log D)\to\Omega^1_X(\log\Delta)$ is locally free and hence its
  exterior powers satisfy the required properties, cf.~\autoref{easy}. In fact, the
  construction outlined in \autoref{sec:appendix} was modeled after this case.

  Finally, to prove \autoref{item:18}, observe that we have just proved that the
  other properties stated in the theorem hold for the corresponding object defined by
  any good hyperresolution of $(X,\Delta)$. For any two good hyperresolution there
  exists a third that maps to and is compatible with both of the others, so it is
  enough to prove \autoref{item:18} for two such good hyperresolutions. Then the
  proofs of \autoref{item:3} and \autoref{item:4} show that there is a natural
  filtered morphism between the two objects defined by the two good
  hyperresolutions. It follows from \autoref{item:8} that the induced morphism is an
  isomorphism for $p=n-d$ and then descending induction using the commutative diagram
  \autoref{eq:12} shows that \autoref{item:18} holds for all $p$. This finishes the
  proof of all the claims in the theorem.
\end{proof}

\noin Next we will compare these objects obtained with respect to different bases
replacing $(B,D)$.  We will be using the standard
$\omega_{B/B'}\leteq \omega_B\otimes \tau^*\omega_{B'}^{-1}$ notation.

\begin{theorem}\label{thm:diff-bases}
  Using the notation from \autoref{thm:log-DB}, in addition let
  $\tau:(B,D)\to (B',D')$ be another morphism of pairs, such that $(B',D')$ is also
  an snc pair and $\tau$ is a dominant generically finite morphism.  Let
  $f'=\tau\circ f$ and $\Gamma\leteq D-\tau^{*}D'$.
  %
  Then for each $0\leq p, q\leq \dim (X/B)=n-d$, and $0\leq j\leq d=\dim B=\dim B'$,
  \begin{enumerate}
  \item\label{item:9}  there exists a natural
    morphism
    \[
      \mu_p: \underline\Omega^p_{X/B'} (\log\Delta/D') \longrightarrow
      \underline\Omega^p_{X/B} (\log\Delta/D) \otimes \left(
        f^*\omega_{B/B'}(\Gamma)\right)^{n-d-p+1}.
    \]
  \item\label{item:10} there exists a natural morphism
    \[
      \nu_{j,q}: \bF_\ffelta^j\underline\Omega^q_{X} (\log\Delta) \longrightarrow
      \bF_\felta^j\underline\Omega^q_{X} (\log\Delta) \otimes \left(
        f^*\omega_{B/B'}(\Gamma)\right)^{n-d-q+j}\text{, and}
    \]
  \item\label{item:11} the natural morphisms in \autoref{item:9} and
    \autoref{item:10} are compatible in the following sense. For each $q$ and $j$
    there exists a commutative diagram of distinguished triangles,
    \[
      \xymatrix{%
        \bF^{j+1}_\ffelta\underline\Omega^q_{X} (\log \Delta) \ar[d]^{\nu_{j+1,q}}
        \ar[r] & \bF^j_\ffelta\underline\Omega^q_{X} (\log \Delta)
        \ar[d]^{\nu_{j,q}\otimes \varsigma} \ar[r] & {\begin{aligned}
            \underline\Omega^{q-j}_{X/B}(\log\Delta/D') \\ \otimes f^*\Omega^{j}_{B'}
            (\log D')\end{aligned}}
        \ar[d]^{\mu_{q-j}\otimes\wedge^j\varrho} \ar[r]^-{+1} & \\
        \bF^{j+1}_\felta\underline\Omega^q_{X} (\log \Delta)\otimes \sL_{j,q} \ar[r]
        & \bF^j_\felta\underline\Omega^q_{X} (\log \Delta)\otimes \sL_{j,q} \ar[r] &
        {\begin{aligned} \underline\Omega^{q-j}_{X/B}(\log\Delta/D) \\ \otimes
            \sL_{j,q} \otimes f^*\Omega^{j}_B (\log D) \end{aligned}}
        \ar[r]^-{+1} &, }
    \]
    where $\sL_{j,q} =\left(f^*\omega_{B/B'}(\Gamma)\right)^{n-d-q+j+1}$ and
    \[
      \varsigma=\varrho\otimes \id_{(\det \Phi'_X)^{-1}} : \sO_X \to
      f^*\omega_{B/B'}(\Gamma).
    \]
  \item\label{item:12} there exists a natural filtered morphism
    \[
      \nu_q: \bF_\ffelta^\kdot\underline\Omega^q_{X} (\log\Delta) \longrightarrow
      \bF_\felta^\kdot\underline\Omega^q_{X} (\log\Delta) \otimes
      \left(f^*\omega_{B/B'}(\Gamma)\right)^{n-q}.
    \]
  \end{enumerate}
\end{theorem}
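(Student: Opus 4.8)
To establish the asserted filtered morphism $\nu_q$, the plan is to assemble it out of the level-wise morphisms $\nu_{j,q}$ of \autoref{item:10}, after rescaling them so that they all land in one and the same twist of $\bF^\kdot_\felta\underline\Omega^q_{X}(\log\Delta)$. Write $\sK\leteq f^*\omega_{B/B'}(\Gamma)$, so that \autoref{item:10} furnishes, for each $0\le j\le d$, a natural morphism $\nu_{j,q}\colon \bF^j_\ffelta\underline\Omega^q_{X}(\log\Delta)\to \bF^j_\felta\underline\Omega^q_{X}(\log\Delta)\otimes \sK^{\,n-d-q+j}$. Since $0\le q\le n-d$ the exponent $n-d-q+j$ is nonnegative for every $j$ in this range, and it is maximal, equal to $n-q$, at $j=d$; the deficit at level $j$ is $d-j\ge 0$. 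The point is that the natural section $\varsigma\colon\sO_X\to\sK$ of \autoref{item:11} lets one push each $\nu_{j,q}$ up to the common twist $\sK^{\,n-q}$, and that \autoref{item:11} already encodes the compatibility of these pushed-up maps with the two hyperfiltrations.

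Concretely, I would set
\[
  \bF^j\nu_q\leteq \bigl(\id\otimes\varsigma^{\otimes(d-j)}\bigr)\circ\nu_{j,q}\colon \bF^j_\ffelta\underline\Omega^q_{X}(\log\Delta)\longrightarrow\bF^j_\felta\underline\Omega^q_{X}(\log\Delta)\otimes\sK^{\,n-q}
\]
for $0\le j\le d$ (so $\bF^d\nu_q=\nu_{d,q}$), and $\bF^{d+1}\nu_q\leteq 0$; these are natural since $\nu_{j,q}$ and $\varsigma$ are. It then remains to check that the $\bF^j\nu_q$ commute with the hyperfiltration structure maps, i.e.\ that for $0\le j\le d-1$ the square with top edge the structure map of $\bF^\kdot_\ffelta\underline\Omega^q_{X}(\log\Delta)$, bottom edge that of $\bF^\kdot_\felta\underline\Omega^q_{X}(\log\Delta)$ tensored with $\id_{\sK^{n-q}}$, and verticals $\bF^{j+1}\nu_q$ and $\bF^j\nu_q$, commutes in $D^b(X)$ (for $j=d$ there is nothing to check, the $\bF^{d+1}$-terms vanishing). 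This square is precisely the leftmost square in the commutative diagram of distinguished triangles of \autoref{item:11} — whose verticals are $\nu_{j+1,q}$ and $\nu_{j,q}\otimes\varsigma$, both landing in the twist $\sL_{j,q}=\sK^{\,n-d-q+j+1}$ — with each vertical further composed on the $\felta$-side with $\id\otimes\varsigma^{\otimes(d-j-1)}$; since $\varsigma^{\otimes(d-j)}=\varsigma\otimes\varsigma^{\otimes(d-j-1)}$ these composites are exactly $\bF^{j+1}\nu_q$ and $\bF^j\nu_q$, the total twist becoming $\sL_{j,q}\otimes\sK^{\,d-j-1}=\sK^{\,n-q}$, and commutativity is preserved because, by bifunctoriality of $\otimes$, the twist $\id\otimes\varsigma^{\otimes(d-j-1)}$ commutes past the horizontal structure maps $\varphi\otimes\id$. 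Hence the $\bigl(\bF^j\nu_q\bigr)_{0\le j\le d+1}$ form a filtered morphism
\[
  \nu_q\colon \bF^\kdot_\ffelta\underline\Omega^q_{X}(\log\Delta)\longrightarrow\bF^\kdot_\felta\underline\Omega^q_{X}(\log\Delta)\otimes\sK^{\,n-q},
\]
with underlying morphism at level $j=0$ equal to $\nu_{0,q}\otimes\varsigma^{\otimes d}$ (using \autoref{thm:log-DB}\autoref{item:1} to identify $\bF^0$ with the whole complex on both sides).

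The only real work in this last step is bookkeeping the exponents and invoking bifunctoriality of $\otimes$ in the right place; since \autoref{item:10} and \autoref{item:11}—where the genuine content sits—are already established, \autoref{item:12} follows formally, and I do not expect a substantial obstacle. The care that is needed has in effect been front-loaded into the construction of the section $\varsigma$ and of the comparison diagram of \autoref{item:11}, which was designed precisely so that this twist-uniformization goes through; if I were to worry about anything, it would be making sure that the hyperfiltration conventions (the twisted object $\bF^\kdot_\felta\underline\Omega^q_{X}(\log\Delta)\otimes\sK^{\,n-q}$ carrying the hyperfiltration $\bF^j(-)\otimes\sK^{\,n-q}$ with structure maps $\varphi\otimes\id$) are matched exactly by the maps $\bF^j\nu_q$ defined above — which, by the computation just sketched, they are.
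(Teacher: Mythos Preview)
Your proof is correct and follows essentially the same approach as the paper: both post-compose the level-wise morphisms $\nu_{j,q}$ (respectively $\nu_{j,q}\otimes\varsigma$) from \autoref{item:10}--\autoref{item:11} with the appropriate power $\varsigma^{d-j-1}$ of the section $\varsigma$ in order to land in the uniform twist $\sK^{\,n-q}$, and then read off the required commutativity from the leftmost square of the diagram in \autoref{item:11}. The only difference is presentational---the paper phrases this as ``adding a third row'' to that diagram via $\id\otimes\varsigma^{d-j-1}$ and then erasing the middle row, whereas you write out the composite $\bF^j\nu_q$ explicitly---but the content is identical.
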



\begin{proof}
  We will use the notation of \autoref{sec:appendix}. In particular, let
  $\Phi_X, \Phi'_X$ and $\Psi_X$ be locally free sheaves on $X$ of rank $k,k'$ and
  $n$ respectively, and let $\varrho: \Phi_X'\to \Phi_X$ and
  $\theta_X:\Phi_X\to\Psi_X$ be two morphisms. Further let
  $\theta'_X\leteq \theta_X\circ\varrho: \Phi_X'\to \Psi_X$.

  Then $\varrho$ induces a natural map between the filtration diagrams corresponding
  to the morphisms $\theta_X',\theta_X$ (the maps go from the ones associated to
  $\theta'_X$ to those associated to $\theta_X$ induced by the morphisms
  $\wedge^r\varrho: \bigwedge^r \Phi_X'\to \bigwedge^r\Phi_X$ for various $r$).

  In particular, let $\Phi_X\leteq f^*\Omega^1_B(\log D)$ and
  $\Phi_X'\leteq (f')^*\Omega^1_{B'}(\log D')$. Then for the objects $\frF^p_i$
  defined in \ref{indstep} one obtains a natural morphism
  \begin{equation}
    \label{eq:4}
    \mu_p^\circ: \frF^{n-d-p}_{n-d-p}(\theta'_X) \longrightarrow
    \frF^{n-d-p}_{n-d-p}(\theta_X) , 
  \end{equation}
  for each $0\leq p \leq n-d=\dim(X/B)$.
  Using 
  the definition,
  \begin{gather}\label{eq:5}
    \frQ_{\theta_X}^p\leteq \frF^{n-d-p}_{n-d-p}(\theta_X)\otimes
    \left(\det\Phi_X\right)^{-(n-d-p+1)}, \\
    \label{eq:6}
    \frQ_{\theta'_X}^p\leteq \frF^{n-d-p}_{n-d-p}(\theta'_X)\otimes
    \left(\det\Phi'_X\right)^{-(n-d-p+1)},
  \end{gather}
   (cf.~\autoref{def:Q}), 
  and the fact that in this case we have
  \begin{equation*}
    \det\Phi_X\simeq f^*\Omega^d_B(\log D) \qquad\text{and}\qquad
    \det\Phi'_X\simeq (f')^*\Omega^{d}_{B'}(\log D')
  \end{equation*}
  we find that there exist natural morphisms
  \begin{multline*}
    \mu_p\leteq \mu_p^\circ\otimes
    \id_{\left(f^*\omega_{B/B'}(\Gamma)\right)^{-(n-d-p+1)}} :
    \underline\Omega^p_{X/B'}
    (\log\Delta/D')\simeq \underline\frQ_{\theta'_X}^p \longrightarrow \\
    \longrightarrow\underline\frQ_{\theta_X}^p \otimes
    \left(f^*\omega_{B/B'}(\Gamma)\right)^{n-d-p+1}\simeq 
    \underline\Omega^p_{X/B} (\log\Delta/D) \otimes
    \left(f^*\omega_{B/B'}(\Gamma)\right)^{n-d-p+1}.
  \end{multline*}
  This proves \autoref{item:9}. The same argument, used for
  \begin{equation*}
    \nu_{j,q}^\circ: \frF^{n-q}_{n-d-q+j}(\theta'_X) \longrightarrow
    \frF^{n-q}_{n-d-q+j}(\theta_X) 
  \end{equation*}
  instead of the morphism in \autoref{eq:4} and using the definition
  
  \begin{equation}
    \label{eq:7}
    \bF^j_\felta\underline\Omega^q_{X} (\log \Delta)
    \leteq \frF^{n-q}_{n-d-q+j}(\theta_X)\otimes
    \left(\det\Phi_X\right)^{-(n-d-q+j)}, \\
  \end{equation}
  (cf.~\autoref{def:Q}),
  gives
  \begin{multline*}
    \nu_{j,q}= \nu_{j,q}^\circ \otimes
    \id_{\left(f^*\omega_{B/B'}(\Gamma)\right)^{-(n-d-q+j)}} :
    \bF_\ffelta^j\underline\Omega^q_{X} (\log\Delta) \longrightarrow \\
    \longrightarrow \bF_\felta^j\underline\Omega^q_{X} (\log\Delta) \otimes
    \left(f^*\omega_{B/B'}(\Gamma)\right)^{n-d-q+j},
  \end{multline*}
  which proves \autoref{item:10}. In fact, following this argument one is led to
  consider distinguished triangles of the form
  \[
    \xymatrix@C4ex{%
      \frF^{n-q}_{n-d-q+j+1}(\theta'_X) \ar[d]^{\nu_{j+1,q}^\circ} \ar[r] &
      \frF^{n-q}_{n-d-q+j}(\theta'_X) \otimes \det\Phi'_X
      \ar[d]^{\nu_{j,q}^\circ\otimes \det\varrho} \ar[r] &
      \frF^{n-d-q+j}_{n-d-q+j}(\theta'_X) \otimes \bigwedge^j \Phi'_X
      \ar[d]^{\mu_{q-j}^\circ\otimes\wedge^j\varrho} \ar[r]^-{+1}
      & \\
      \frF^{n-q}_{n-d-q+j+1}(\theta_X) \ar[r] & \frF^{n-q}_{n-d-q+j}(\theta_X) \otimes
      \det\Phi_X \ar[r] & \frF^{n-d-q+j}_{n-d-q+j}(\theta_X) \otimes \bigwedge^j \Phi_X
      \ar[r]^-{+1} &, \\}
  \]
  where the vertical arrows are induced by the morphism $\varrho: \Phi_X'\to \Phi_X$
  as indicated. Tensoring this diagram with $\left(\det\Phi'_X\right)^{-(n-d-q+j+1)}$
  gives
  \begin{equation*}
    \xymatrix{%
      \bF^{j+1}_\ffelta\underline\Omega^q_{X} (\log \Delta)
      \ar[d]^{\nu_{j+1,q}} \ar[r] &
      \bF^j_\ffelta\underline\Omega^q_{X} (\log \Delta) \ar[d]^{\nu_{j,q}\otimes
        \varsigma} \ar[r] & 
       {\begin{aligned}     \underline\Omega^{q-j}_{X/B}(\log\Delta/D') \\   \otimes f^*\Omega^{j}_{B'} (\log D')  \end{aligned}} 
      \ar[d]^{\mu_{q-j}\otimes\wedge^j\varrho} \ar[r]^-{+1} & \\
      \bF^{j+1}_\felta\underline\Omega^q_{X} (\log \Delta)\otimes \sL _{j,q}\ar[r] &
      \bF^j_\felta\underline\Omega^q_{X} (\log \Delta)\otimes \sL _{j,q}\ar[r] &
      {\begin{aligned}   \underline\Omega^{q-j}_{X/B}(\log\Delta/D)  \\   \otimes \sL _{j,q}
      \otimes f^*\Omega^{j}_B (\log D) \end{aligned}}
      \ar[r]^-{+1} &,}
  \end{equation*}
  where $\sL_{j,q} =\left(f^*\omega_{B/B'}(\Gamma)\right)^{n-d-q+j+1}$ and
  $\varsigma=\wedge^d\varrho\otimes \id_{(\det \Phi'_X)^{-1}} : \sO_X \to
  f^*\omega_{B/B'}(\Gamma)$. This proves \autoref{item:11}.

  Finally, in order to prove \autoref{item:12}, first recall that
  $\bF^{j+1}_\felta\underline\Omega^q_{X} (\log \Delta)=0$ and
  $\bF^{j+1}_\ffelta\underline\Omega^q_{X} (\log \Delta)=0$ by
  \autoref{thm:log-DB}\autoref{item:1}, so we may assume that $j<\dim B=d$. With that
  restriction, consider the morphism
  \begin{equation}
    \label{eq:9}
    \varsigma^{d-j-1} : \sL_{j,q} =
    \left(f^*\omega_{B/B'}(\Gamma)\right)^{n-d-q+j+1}\to
    \left(f^*\omega_{B/B'}(\Gamma)\right)^{n-q} =\colon \sL_q. 
  \end{equation}
  This morphism allows us to add one more row to the above commutative diagram:
  \begin{equation*}
    \xymatrix{%
      \bF^{j+1}_\ffelta\underline\Omega^q_{X} (\log \Delta)
      \ar[d]^{\nu_{j+1,q}} \ar[r] &
      \bF^j_\ffelta\underline\Omega^q_{X} (\log \Delta) \ar[d]^{\nu_{j,q}\otimes
        \varsigma} \ar[r] & 
       {\begin{aligned}     \underline\Omega^{q-j}_{X/B}(\log\Delta/D') \\ 
          \otimes f^*\Omega^{j}_{B'} (\log D')  \end{aligned}}    
      \ar[d]^{\mu_{q-j}\otimes\wedge^j\varrho} \ar[r]^-{+1} & \\
      \bF^{j+1}_\felta\underline\Omega^q_{X} (\log \Delta)\otimes \sL _{j,q}\ar[r]
      \ar[d]^{\id\otimes \varsigma^{d-j-1}} 
      &
      \bF^j_\felta\underline\Omega^q_{X} (\log \Delta)\otimes \sL _{j,q}\ar[r]
      \ar[d]^{\id\otimes \varsigma^{d-j-1}}
      &
      {\begin{aligned} \underline\Omega^{q-j}_{X/B}(\log\Delta/D) \\  
      \otimes \sL _{j,q} \otimes f^*\Omega^{j}_B (\log D)  \end{aligned}}
      \ar[d]^{\id\otimes \varsigma^{d-j-1}} \ar[r]^-{+1} &,
      \\
      \bF^{j+1}_\felta\underline\Omega^q_{X} (\log \Delta)\otimes \sL _{q}\ar[r] &
      \bF^j_\felta\underline\Omega^q_{X} (\log \Delta)\otimes \sL _{q}\ar[r] &
      {\begin{aligned}        \underline\Omega^{q-j}_{X/B}(\log\Delta/D) \\ 
       \otimes \sL _{q} \otimes f^*\Omega^{j}_B (\log D) \end{aligned}}
      \ar[r]^-{+1} &.
    }
  \end{equation*}
  Now, erasing the middle row gives us commutative diagrams, for each $j$, with the
  same line bundle multiplier in the last row. In other words this shows that there
  exists a natural filtered morphism
  \[
    \nu_q: \bF_\ffelta^\kdot\underline\Omega^q_{X} (\log\Delta) \longrightarrow
    \bF_\felta^\kdot\underline\Omega^q_{X} (\log\Delta) \otimes \sL_q,
  \]
  as claimed in \autoref{item:12} (cf.~\autoref{eq:9}).
\end{proof}

\begin{theorem}\label{thm:pullback}
  Using the notation from \autoref{thm:log-DB} and \autoref{thm:diff-bases}, we have
  that for each $0\leq p \leq \dim(X/B)$ there exists an integer
  $0 \leq \bbeta \leq \dim X-p$,
  \begin{enumerate}
  \item\label{item:13} a natural filtered morphism
    \begin{equation*}
      \bF^\kdot_{X/B}\underline\Omega^p_{X}
      \longrightarrow   \underline      \bF_\felta^\kdot \underline
      \Omega^p_{X}(\log\Delta) 
      \otimes 
      \sO_X(\bbeta \tightdot f^*D ), \text{ and}
    \end{equation*}
  \item\label{item:6} a natural morphism
    \begin{equation*}
      \underline\Omega^p_{X/B}
      \longrightarrow   \underline       \Omega^p_{X/B}(\log\Delta/D)   \otimes
      \sO_X(\bbeta \tightdot f^*D ).
    \end{equation*}
  \end{enumerate}
  In addition, let $g:(Y,\Gamma)\to (X,\Delta)$ be another morphism of pairs such
  that $\dim Y=\dim X=n$. Then there exist
  \begin{enumerate}[resume]
  \item\label{item:14} a natural filtered morphism
    \begin{equation*}
      \bF^\kdot_{X/B}\underline\Omega^p_{X}
      \longrightarrow      \myR g_*   \bF_{fg}^\kdot \underline
      \Omega^p_{Y}(\log\Gamma) 
      \otimes 
      \sO_X(\bbeta \tightdot f^*D ), \text{ and}
    \end{equation*}
  \item\label{Map1}\label{item:7}  a natural morphism
    \begin{equation*}
      \varkappa_p: \underline\Omega^p_{X/B}
      \longrightarrow   \myR g_* \underline \Omega^p_{Y/B}(\log \Gamma/D)   \otimes
      \sO_X(\bbeta \tightdot f^*D ).  
    \end{equation*}
  \end{enumerate}
\end{theorem}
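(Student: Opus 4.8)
The plan is to derive all four statements, by purely formal manipulations, from \autoref{thm:log-DB} and \autoref{thm:diff-bases}; since both of those theorems already hold for an arbitrary reduced pair $(X,\Delta)$, no separate reduction to the snc case is needed here. The core statement is \autoref{item:13}, and the remaining three follow from it. Granting a natural filtered morphism $\bF^\kdot_{X/B}\underline\Omega^p_{X}\to\bF^\kdot_{f}\underline\Omega^p_{X}(\log\Delta)\otimes\sO_X(\bbeta\tightdot f^*D)$, statement \autoref{item:6} is obtained by passing to $0$-th graded pieces and invoking \autoref{thm:log-DB}\autoref{item:2} with $j=0$, which identifies $\bGr^{0}$ of $\bF^\kdot_{f}\underline\Omega^p_{X}(\log\Delta)$ with $\underline\Omega^p_{X/B}(\log\Delta/D)$ and $\bGr^{0}$ of $\bF^\kdot_{X/B}\underline\Omega^p_{X}$ with $\underline\Omega^p_{X/B}$; here $\bGr^{0}$ is compatible with twisting by the line bundle $\sO_X(\bbeta\tightdot f^*D)$. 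Statement \autoref{item:14} is obtained by post-composing \autoref{item:13} with the filtered morphism of \autoref{thm:log-DB}\autoref{item:3} attached to $g$, tensored with $\id_{\sO_X(\bbeta\tightdot f^*D)}$; here one first notes that $f\circ g\colon(Y,\Gamma)\to(B,D)$ is a morphism of pairs, because $\supp\Gamma\supseteq g^{-1}\supp\Delta\supseteq(f\circ g)^{-1}\supp D$, so that $\bF^\kdot_{fg}\underline\Omega^p_{Y}(\log\Gamma)$ is defined. Finally \autoref{item:7} is obtained by post-composing \autoref{item:6} with \autoref{thm:log-DB}\autoref{item:4} attached to $g$, tensored with the identity of $\sO_X(\bbeta\tightdot f^*D)$, or equivalently by applying $\bGr^{0}$ to \autoref{item:14}. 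The same integer $\bbeta$ serves in all four parts.

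It remains to construct \autoref{item:13}. Set $n=\dim X$ and $\bbeta\leteq n-p$, which visibly satisfies $0\le\bbeta\le\dim X-p$. I will realize the required morphism as a composition of two natural filtered morphisms. The first is an instance of \autoref{thm:log-DB}\autoref{item:3}, applied to the morphism of pairs $\id_X\colon(X,\Delta)\to(X,\emptyset)$ over the base $(B,\emptyset)$. This is legitimate: $\id_X$ is a morphism of pairs, as $\supp\Delta\supseteq\emptyset$; the dimension hypothesis holds trivially; and $f\colon(X,\emptyset)\to(B,\emptyset)$ is a morphism of pairs with $(B,\emptyset)$ an snc pair, since $B$ is smooth. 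The resulting filtered morphism is $\bF^\kdot_{X/B}\underline\Omega^p_{X}\longrightarrow\bF^\kdot\underline\Omega^p_{X}(\log\Delta)$, where on the right the hyperfiltration is the one attached to the morphism $f\colon(X,\Delta)\to(B,\emptyset)$.

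The second is an instance of \autoref{thm:diff-bases}\autoref{item:12}, applied with $\tau\leteq\id_B\colon(B,D)\to(B,\emptyset)$: the identity of $B$ is a dominant generically finite morphism and $(B,\emptyset)$ is an snc pair, so the hypotheses are met. With this choice, in the notation of \autoref{thm:diff-bases} one has $\Gamma=D$ (as $D'=\emptyset$) and $\omega_{B/B}=\sO_B$, hence $f^*\omega_{B/B}(\Gamma)\simeq\sO_X(f^*D)$; taking $q=p$, \autoref{thm:diff-bases}\autoref{item:12} produces a natural filtered morphism $\bF^\kdot\underline\Omega^p_{X}(\log\Delta)\longrightarrow\bF^\kdot_{f}\underline\Omega^p_{X}(\log\Delta)\otimes\sO_X\big((n-p)\tightdot f^*D\big)$, whose source hyperfiltration is the one attached to $f\colon(X,\Delta)\to(B,\emptyset)$ and whose target hyperfiltration is the one attached to $f\colon(X,\Delta)\to(B,D)$. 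Since the source of this second morphism is exactly the target of the first, their composition is the filtered morphism required in \autoref{item:13}, with $\bbeta=n-p=\dim X-p$.

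I expect this construction to be essentially bookkeeping, with no serious obstacle. The one point demanding care is the degenerate application of \autoref{thm:diff-bases} with $\tau=\id_B$ and empty $D'$: one must be comfortable instantiating that theorem in this boundary case and, above all, keep scrupulous track of which of the two base pairs $(B,\emptyset)$ and $(B,D)$ each hyperfiltration is formed over, so that the two morphisms are composable. The value $\bbeta=\dim X-p$ is precisely the exponent $n-q$ appearing in \autoref{thm:diff-bases}\autoref{item:12}, and this is the exact mechanism by which the discrepancy twist $\sO_X(\bbeta\tightdot f^*D)$ is forced; smaller admissible values of $\bbeta$ in special situations (for instance any $\bbeta\ge0$ when $D=0$, where the twist is trivial) come for free but are not needed.
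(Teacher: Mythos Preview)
Your proposal is correct and follows essentially the same approach as the paper: specialize \autoref{thm:diff-bases}\autoref{item:12} to $\tau=\id_B:(B,D)\to(B,\emptyset)$, so that $\omega_{B/B'}\simeq\sO_B$ and $\Gamma=D$, yielding the twist $\sO_X((n-p)f^*D)$; then deduce \autoref{item:6} via \autoref{thm:log-DB}\autoref{item:2}, and \autoref{item:14}, \autoref{item:7} by composing with \autoref{thm:log-DB}\autoref{item:3}, \autoref{item:4}. Your extra first step---invoking \autoref{thm:log-DB}\autoref{item:3} for $\id_X:(X,\Delta)\to(X,\emptyset)$ to pass from $\bF^\kdot_{X/B}\underline\Omega^p_X$ to the hyperfiltration on $\underline\Omega^p_X(\log\Delta)$ over $(B,\emptyset)$---is a welcome clarification that the paper's one-line proof leaves implicit.
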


\begin{proof}
  Let $D'\leteq \emptyset$ and consider the morphism of pairs $\tau:(B,D)\to
  (B,D')$. Observe that $\omega_{B/B'}\simeq \sO_B$ and $\Gamma=D$, hence
  \autoref{item:13} follows from \autoref{thm:diff-bases}\autoref{item:12}.  Then
  \autoref{item:6} follows from \autoref{item:13} and
  \autoref{thm:log-DB}\autoref{item:2}.  The required natural filtered morphisms in
  \autoref{item:14} is simply the composition of the natural filtered morphisms in
  \autoref{item:13} and \autoref{thm:log-DB}\autoref{item:3} (more precisely, the
  latter is twisted with the line bundle $\sO_X(\bbeta \tightdot f^*D)$). Finally,
  \autoref{item:7} follows from \autoref{item:6} and
  \autoref{thm:log-DB}\autoref{item:4}.
\end{proof}

\begin{definition}
\label{def:discrep}
Let $D\subset B$ be a reduced, effective divisor. The smallest non-negative integer
$a\in\bN$ for which a morphism as in \autoref{thm:pullback}\autoref{item:6} for each
$0\leq p\leq \dim(X/B)$ with the choice of $a_p =a \leq \dim X$ exists will be called
the \emph{discrepancy} of $D$ with respect to $f: X\to B$ and will be denoted by
$a_f(D)$.
%
\end{definition}

\setcounter{subsection}{\value{theorem}}
\renewcommand\thesubsection{\thesection.\arabic{subsection}}
\subsection{Koszul triangles}\label{subsect:Koszul}
\renewcommand\thesubsection{\thesection.\Alph{subsection}}
\addtocounter{theorem}1
Let $f: (X, \Delta) \to (B,D)$ be a morphism of pairs and assume that $(B,D)$ is an
snc pair.  Let $n=\dim X$ and $d=\dim B$ and $0\leq p \leq \dim (X/B)=n-d$.

Let $\bG^{0,2}_\felta$ denote the mapping cone of the morphism
$\bF_\felta^2\underline\Omega^p_{X} (\log\Delta) \to
\bF_\felta^0\underline\Omega^p_{X} (\log\Delta)$ and consider the commutative diagram
of distinguished triangles,
\begin{equation}
  \label{eq:11}
  \begin{gathered}
    \xymatrix{%
      \bF_\felta^2\underline\Omega^p_{X} (\log\Delta) \ar[r] \ar[d] &
      \bF_\felta^2\underline\Omega^p_{X} (\log\Delta)
      \ar[r]  \ar[d]  &  0  \ar[d]^\chi  \ar[r]^-{+1}  &  \\
      \bF_\felta^1\underline\Omega^p_{X} (\log\Delta) \ar[r] \ar[d] &
      \bF_\felta^0\underline\Omega^p_{X} (\log\Delta) \ar[r] \ar[d] & \bG
      r^0_\felta\underline\Omega^p_{X} (\log\Delta) \ar[d]
      \ar[r]^-{+1} & \\
      \bG r^1_\felta\underline\Omega^p_{X} (\log\Delta) \ar@{..>}[r] \ar[d]^-{+1} &
      \bG r^{0,2}_\felta \ar@{..>}[r] \ar[d]^-{+1} & \bG
      r^0_\felta\underline\Omega^p_{X}
      (\log\Delta)  \ar[r]^-{+1} \ar[d]^-{+1} & . \\
      &&& }
  \end{gathered}
\end{equation}
Then the dotted arrows exist by
\cite{MR1106349}*{Theorem~1.8}
(cf.~\cite{Kov13}*{Theorem~B.1}) and they maybe identified with the induced morphisms
on the mapping cones.  Therefore we obtain the distinguished triangle
\begin{equation}\label{KTriangle}
  \xymatrix{%
    \bG r^1_\felta\underline\Omega^p_{X} (\log\Delta) \ar[r] &
    \bG r^{0,2}_\felta \ar[r] & \bG r^0_\felta\underline\Omega^p_{X} (\log\Delta)
    \ar[r]^-{+1} &  }
\end{equation}
in which each term is defined as the mapping cone of the vertical morphisms in
\autoref{eq:11} and the morphisms are the ones coming from the mapping cone
construction.  We will refer to this distinguished triangle in \autoref{KTriangle} as
the \emph{$p^\text{th}$-Koszul triangle} of $f: (X, \Delta) \to (B,D)$ and denote it
by $\underline{\Kosz}_{\felta}^{p}(\log \Delta)$.  As before, in case
$\Delta=\emptyset$ and $D=\emptyset$, then we will denote this by
$\underline \Kosz^{p}_{X/B}$.  Replacing
$\bG r^i_\felta\underline\Omega^p_{X} (\log\Delta)$ for $i=0,1$ by isomorphic objects
as in \autoref{thm:log-DB}\autoref{item:2} we obtain an alternative expression for
$\underline{\Kosz}_{\felta}^{p}(\log \Delta)$:
\begin{equation}
  \label{eq:10}
  \xymatrix{%
     f^*\Omega^{1}_B (\log D)\otimes \underline \Omega^{p-1}_{X/B}(\log\Delta/D)
    \ar[r] &    \bG r^{0,2}_\felta \ar[r] &
    \underline \Omega^{p}_{X/B}(\log\Delta/D) 
    \ar[r]^-{+1} & . }
\end{equation}

\begin{remark}\label{rem:Neeman}
  The nine lemma in triangulated categories is somewhat trickier than in abelian
  categories. It is not true that any morphism of triangles induce a distinguished
  triangle on their mapping cones. What \cite{MR1106349}*{Theorem~1.8} and
  \cite{Kov13}*{Theorem~B.1} state is that there exists a $\chi$ (see upper right
  side of \autoref{eq:10}) such that the third row (of mapping cones) forms a
  distinguished triangle. In addition, it follows from \cite{Kov13}*{Theorem~B.1}
  that in the case of \autoref{eq:11} the $\chi$ is in fact uniquely determined. This
  is, of course, not surprising given that the initial object of $\chi$ is $0$, but
  one should remember that we are working in the derived category, so caution is
  warranted.
  
  We would also like to emphasize that we are not merely stating that a distinguished
  triangle exists with the given objects as in \autoref{KTriangle} and
  \autoref{eq:10}, but that the morphisms of the triangle are exactly the ones one
  would hope for, namely the morphisms induced by the mapping cone construction.  In
  particular, this means that the Koszul triangles will inherit any natural property
  carried by the filtrations used in their definition.
\end{remark}

\begin{corollary}\label{cor:pullback}
  Using the above notation, let $g:(Y,\Gamma)\to (X,\Delta)$ be another morphism of
  pairs such that $\dim Y=\dim X=n$. Then the morphisms $\varkappa_p$ obtained in
  \autoref{thm:pullback}\autoref{item:7} are compatible with Koszul triangles, that
  is, there exist natural compatible morphisms of the terms of the following Koszul
  triangles:
  \[
    \varkappa_p : \underline \Kosz^p_{X/B} \longrightarrow \dR g_* \underline
    \Kosz^p_{\gelta} (\log \Gamma)  (a_f(D) \cdot f^* D)
  \]
  (With a slight abuse of notation we will denote these morphisms of Koszul triangles
  by the same symbol).
\end{corollary}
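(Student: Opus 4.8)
The plan is to obtain the morphism of Koszul triangles directly from the filtered morphism already constructed in \autoref{thm:pullback}, by observing that the entire Koszul‑triangle construction of \eqref{eq:11}--\eqref{KTriangle} depends on nothing more than the hyperfiltration $\bF^{\kdot}_\felta$ together with the \emph{uniquely determined} nine‑lemma datum of \cite{MR1106349}*{Theorem~1.8}, and is therefore transported by any filtered morphism. In other words, once one has the filtered pullback map, compatibility with the Koszul triangles should be essentially automatic, modulo the usual care needed in a triangulated category.

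First I would fix $a_p=a_f(D)$ for every $0\leq p\leq\dim(X/B)$; this is legitimate by \autoref{def:discrep} (and by the proof of \autoref{thm:pullback}, where the morphisms of parts \autoref{item:13}, \autoref{item:6}, \autoref{item:14} and \autoref{item:7} are all produced from one common $a_p$). With this choice, \autoref{thm:pullback}\autoref{item:14} becomes a single \emph{filtered} morphism
\[
  \nu^{\kdot}\colon \bF^{\kdot}_{X/B}\underline\Omega^p_{X} \longrightarrow \myR g_*\,\bF^{\kdot}_{\gelta}\underline\Omega^p_{Y}(\log\Gamma)\otimes\sO_X(a_f(D)\tightdot f^*D).
\]
Being filtered, $\nu^{\kdot}$ is a compatible family of morphisms on the pieces $\bF^j$ commuting with the structure maps $\varphi_j\colon\bF^{j+1}_\felta\to\bF^j_\felta$; restricting to $j=0,1,2$ it is exactly a morphism between the two ``upper'' parts of the diagrams \eqref{eq:11}, one for $\felta$ and one for $\gelta$ — the latter being obtained by applying the triangulated functor $\myR g_*$ and the exact functor $-\otimes\sO_X(a_f(D)\tightdot f^*D)$ to \eqref{eq:11} written for $\gelta\colon(Y,\Gamma)\to(B,D)$, which again yields a $3\times 3$ diagram of distinguished triangles of the same shape whose third row is $\myR g_*\underline\Kosz^p_{\gelta}(\log\Gamma)(a_f(D)\tightdot f^*D)$.

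Next I would pass to the cones. The objects $\bG r^1_\felta\underline\Omega^p_{X}$, $\bG r^0_\felta\underline\Omega^p_{X}$ and $\bG r^{0,2}_\felta$ appearing in \eqref{eq:11} are by construction the mapping cones of $\varphi_1$, of $\varphi_0$, and of $\bF^2_\felta\to\bF^0_\felta$, so the compatible maps supplied by $\nu^{\kdot}$ induce maps on each of them, and together these extend the morphism of upper parts to a morphism of the full $3\times 3$ diagrams. The only point needing justification is compatibility with the connecting morphisms of \eqref{eq:11} and \eqref{KTriangle}: this follows from \cite{Kov13}*{Theorem~B.1} (cf.~\autoref{rem:Neeman}), since the morphism $\chi$ sits in the corner with source $0$ and is therefore uniquely determined on either side, hence so is the induced morphism of the third rows, and in particular it is natural. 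Restricting to the third rows then produces the desired $\varkappa_p\colon\underline\Kosz^p_{X/B}\to\myR g_*\underline\Kosz^p_{\gelta}(\log\Gamma)(a_f(D)\tightdot f^*D)$. That its component on the last term coincides with the $\varkappa_p$ of \autoref{thm:pullback}\autoref{item:7}, and that the three terms can be rewritten in the shape of \eqref{eq:10} on both ends, is then immediate: both facts come from the graded piece $\nu^0$ together with \autoref{thm:log-DB}\autoref{item:2}, and the snc hypothesis on $(B,D)$ lets one move the locally free twists $f^*\Omega^j_B(\log D)$ as well as $\sO_X(a_f(D)\tightdot f^*D)$ through the relevant identifications by the projection formula.

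The main obstacle is the standard one about triangulated categories: the mapping cone is only weakly functorial, so a priori a morphism of the upper parts of \eqref{eq:11} guarantees only the \emph{existence}, not the canonicity, of a filling on the third row. What makes the argument go through is precisely the uniqueness clause of \cite{Kov13}*{Theorem~B.1} — every ambiguous arrow in \eqref{eq:11} is pinned down because the diagram has a $0$ in the corner — and this is also what upgrades the statement from ``$\varkappa_p$ identifies the vertices of the two triangles'' to ``$\varkappa_p$ is a morphism of triangles'', which is what \autoref{rem:Neeman} insists on. A secondary, purely bookkeeping, point to settle is that the single integer $a_f(D)$ serves simultaneously in \autoref{item:6}, \autoref{item:7} and \autoref{item:14}; this is built into the proof of \autoref{thm:pullback}.
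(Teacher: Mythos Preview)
Your proposal is correct and follows essentially the same approach as the paper: the paper's proof simply notes that \autoref{thm:pullback} furnishes natural morphisms between the terms of \eqref{eq:11} which commute with the first two rows and all the columns, and then invokes the mapping-cone description of the third row (via \autoref{rem:Neeman}) to conclude. Your version is more explicit about the triangulated-category subtleties, but the argument is the same.
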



\begin{proof}
  \autoref{thm:pullback} implies that there exist natural morphisms between the terms
  of the diagram \autoref{eq:11} for $f: (X,\emptyset)\to (B,\emptyset)$ and for
  $fg: (Y,\Gamma)\to (B,D)$.  It follows from \autoref{thm:pullback}\autoref{item:13}
  that these morphisms commute with the first two rows and all the columns.
  Then it follows that they also commute with the third row as well, which is exactly
  the desired statement.

  The naturality of $\varkappa_p$ follows from the naturality of the morphisms in
  \autoref{thm:pullback} and the fact that the morphisms in $\underline\Kosz^p_{X/B}$
  are given by the mapping cone construction, as explained in \autoref{rem:Neeman}.
\end{proof}

\begin{remark}\label{rk:refined}
  One can slightly generalize \autoref{cor:pullback} by considering
  \[
    \varkappa_p: \underline \Kosz^p_f (\log \Delta') \longrightarrow \dR g_*
    \underline \Kosz^p_{fg} (\log\Gamma) \big( a_f(D-D') \cdot f^*(D-D') \big),
  \]
  where $0\leq D'\leq D$ and $\Delta'= f^{-1}D' \leq \Delta$.  The proof is the same
  as the one for \autoref{cor:pullback}.
\end{remark}

\begin{not-rem}\label{rem:triangles}
  For an snc morphism, $f: (X, \Delta) \to (B, D)$, we use the notation
  $\Kosz^p_f(\log \Delta)$ to denote the triangle defined by the standard Koszul
  filtration $F^{\kdot}_K$. Note that for such morphisms, using
  \autoref{thm:log-DB}\autoref{item:5}, there is a natural isomorphism of triangles
  $ \underline \Kosz^p_f (\log \Delta) \longrightarrow \Kosz^p_f (\log \Delta)$,
  defined explicitly by
  \[
    \xymatrix{%
      f^* \Omega^1_B (\log D) \otimes \underline\Omega^{p-1}_{X/B} (\log \Delta/D)
      \ar[r] \ar[d]^{\qis} & \bG r^{0,2}_\felta \ar[r] \ar[d] & \underline
      \Omega^p_{X/B} (\log \Delta/D)  \ar[r]^(0.7){+1} \ar[d]^{\qis}  & \\
      f^*\Omega^1_B(\log D) \otimes \Omega^{p-1}_{X/B} (\log \Delta) \ar[r] &
      \Gr^{0,2}_{F_K^{\kdot}} \ar[r] & \Omega^p_{X/B} (\log \Delta) \ar[r]^(0.7){+1}
      & , }
  \]
  where the vertical quasi-isomorphims are the ones defined by
  \autoref{thm:log-DB}\autoref{item:5} (see also \autoref{rem:Neeman}).
 \end{not-rem}

\section{Systems of Hodge sheaves 
  and derived extensions}\label{sect:Section3-DBExtension}

\noin Our aim is now to use the complexes of relative logarithmic DB forms from
\autoref{sec:rel-db-cxs} to construct systems of Hodge sheaves for arbitrary flat
families. Since the proofs of \autoref{thm:DBI} and \autoref{thm:DBI'} are
interdependent, they will be presented together.  First we need to introduce the
notion of \emph{discrepancy} for flat families that appears in the setting of
\autoref{thm:DBI'}.

\begin{notation}
  Given a flat projective morphism $g: X\to B$ of regular schemes, we denote the
  reduced divisorial part of the discriminant locus $\disc(g)$ of g by $D_g$.
\end{notation}

\begin{notation}
  \label{notation:disc}
  Given a flat, projective family of schemes $f: X\to B$ with regular base $B$, let
  $\pi:\wtilde X\to X$ be a log resolution of the pair and $\wtilde f:\wtilde X\to B$
  the induced family. That is, assume that
  $\Delta_{\wtilde f}:= \wtilde f^{-1}(D_{\wtilde f})$ is a divisor with normal
  crossings. By \autoref{thm:pullback}\ref{item:6} and
  \autoref{thm:log-DB}\ref{item:4} (or \autoref{cor:pullback}), over the flat locus
  of $\wtilde f$, there is a morphism
  
  \begin{equation}\label{eq:star}
    \varkappa_p: \underline\Omega^p_{X/B}  \longrightarrow  
    \dR \pi_*  \underline\Omega^p_{\wtilde X/B}  (\log \Delta_{\wtilde f}/ D_{\wtilde
      f}) ( a_p \tightdot f^*D_{\wtilde f} ). 
  \end{equation}
  Following \autoref{def:discrep}, we use the notation $a_{\wtilde f,p}$ to denote
  the smallest integer for which the morphism \autoref{eq:star}, with the choice of
  $a_p = a_{\wtilde f,p}$, exists.
  Furthermore, we use $\af$ to denote the discrepancy $a_f(D_{\wtilde f})$ of
  $D_{\wtilde f}$ with respect to $f$ cf.~\autoref{def:discrep}.
\end{notation}

In the course of the proof of \autoref{thm:DBI} and \autoref{thm:DBI'} it is helpful
to differentiate between the various properties of systems of Hodge sheaves.  To do
so we introduce the following terminology.

\begin{definition}
  \label{def:system}
  Let $\sW$ be an $\sO_B$-module on a regular scheme $B$, $w\in\bN$.  Then a
  \emph{$\sW$-valued system of weight $w$} is a pair $(\sE, \tau)$ where $\sE$ is an
  $\sO_B$-module and $\tau: \sE \to \sW\otimes \sE$ is sheaf homomorphism, such that
  there exists an $\sO_B$-module splitting $\sE = \bigoplus_{i=0}^w \sE_i$ with
  respect to which $\tau$ is Griffiths-transversal, that is, for every $i=0,\dots,w$,
  \[
    \tau: \sE_i \to \sW \otimes \sE_{i+1} .
  \]
  Using this terminology an $\Omega^1_B$-valued system with an integrable and
  $\sO_B$-linear map $\tau$ is a \emph{system of Hodge sheaves (of weight $w$)}. When
  $\sE$ is reflexive, we call $(\sE=\bigoplus \sE_i, \theta)$ a \emph{system of
    reflexive Hodge sheaves}.
\end{definition}

\subsection{Proof of \autoref{thm:DBI} and \autoref{thm:DBI'}}
\label{subsect:System}
Let $\pi: \wtilde X \to X$ be a good resolution with the induced map
$\wtilde f: \wtilde X\to B$.  As introduced in \autoref{notation:disc}, $\af$ denotes
the discrepancy of the family with respect to $f$.  Set $m:= n-d$, where $n= \dim X$
and $d= \dim B$.  After removing a subscheme from $B$ of $\codim_B \geq 2$, defined
by the complement of the flat locus of $\wtilde f$, for every $0 \leq i \leq m$,
consider the map of distinguished triangles
\begin{equation}\label{triangle1}
  \varkappa_{m-i} : \underline \Kosz^{m-i}_{X/B}  \longrightarrow  
  \dR \pi_* \underline \Kosz^{m-i}_{\wtilde f} (\log \Delta_{\wtilde f}) \big(
  \af \tightdot f^* D_{\wtilde f} \big)  
\end{equation}
established in~\autoref{cor:pullback}. By applying $\dR f_*$ to (\ref{triangle1}) we
find
\[
  \dR f_* \varkappa_{m-i} : \dR f_* \underline \Kosz_{X/B}^{m-i} \longrightarrow \dR
  \wtilde f_* \big( \underline \Kosz_{\wtilde f}^{m-i}(\log \Delta_{\wtilde f}) \big)
  ( \af \tightdot D_{\wtilde f} ) .
\]
From the resulting cohomology sequence and the filtered quasi-isomorphism 
\[
  ( \underline \Omega^{m-i}_{\wtilde X}(\log \Delta_{\wtilde f}) , \bF^{\kdot}_f )
  \simeq_{qis} ( \Omega^{m-i}_{\wtilde X} (\log \Delta_{\wtilde f}) , F^{\kdot}_K )
\]
in \autoref{thm:log-DB}\autoref{item:5}, and \autoref{rem:triangles} we find
connecting homomorphisms $\tau_i$ and $\theta_i^0 \otimes \id$ with the commutative
diagram
\begin{equation}\label{eq:ComSquare}
  \begin{aligned}
    \xymatrix{ \myR^i f_* \bGr_{\bF_{X/B}^{\kdot}}^0 \ar[d] \ar[rr]^{\tau_i} &&
      \myR^{i+1}
      f_* \bGr^1_{\bF_{X/B}^{\kdot}}  \ar[d] \\
      \myR^i \wtilde f_* \bG r^0_{\wtilde f} \; (\af\tightdot
      D_{\wtilde f}) \ar[d]^{\simeq} \ar[rr] && \myR^{i+1} \wtilde f_* \bG
      r^1_{\wtilde f} \; ( \af \tightdot
      D_{\wtilde f} )\ar[d]^{\simeq}  \\
      \myR^i \wtilde f_* \Gr^0_{F_K^{\kdot}} (\af \tightdot D_{\wtilde
        f}) \ar[rr]^{\theta_i^0 \otimes \id} && \myR^{i+1} \wtilde f_*
      \Gr^1_{F_K^{\kdot}} (\af \tightdot
      D_{\wtilde f})  .   \\
    }
  \end{aligned}
\end{equation}
Here, by
$\theta_i^0: \dR^i \wtilde f_* \Gr^0_{F^{\kdot}_K} \to \dR^{i+1}\wtilde f_*
\Gr^1_{F^{\kdot}_K}$ we denote the connecting map arising from the cohomology
sequence associated to $\dR \wtilde f_* \Kosz^{m-i}_{\wtilde f} (\log \Delta)$.

Next, we define the two systems $(\sF= \bigoplus \sF_i, \tau = \bigoplus \tau_i)$ and
$(\sE^{\bf 0} = \bigoplus \sE_i^{\bf 0}, \theta^{\bf 0} = \bigoplus \theta^{\bf 0}_i
)$ by
\begin{equation}\label{eq:SysDefs}
  \sF_i: = \myR^i f_* \underline \Omega^{m-i}_{X/B}  \;\;\; \text{and} \;\;\;
  \sE_i^{\bf 0}: = \myR^i \wtilde f_* \Omega^{m-i}_{\wtilde X/B} (\log \Delta_{\wtilde
    f}).  
\end{equation}
By construction, the two systems $(\sF, \tau)$ and $(\sE^{\bf 0}, \theta^{\bf 0})$
are $\Omega^1_B$-valued and $\Omega^1_B (\log D_{\wtilde f})$-valued, respectively.
Set
\[
  (\sE^{\af 
  } , \theta^{\af} ) : = (\sE^{\bf 0} ,
  \theta^{\bf 0}) (\af \tightdot D_{\wtilde f} ) .
\]
It follows from (\ref{eq:ComSquare}) that there are sheaf morphisms
$\psi_i: \sF_i \to \sE_i^{\af}$ fitting in the commutative diagram,
\[
  \xymatrix{ \sF_i \ar[rr]^{\tau_i} \ar[d]^{\psi_i} && \Omega^1_B \otimes \sF_{i+1}
    \ar[d]^{\psi_{i+1}}  \\
    \sE_i^{\af} \ar[rr]^(0.4){\theta^{\af}_i} &&
    \Omega^1_B (\log D_{\wtilde f}) \otimes \sE_{i+1}^{\af} .  }
\]
Define the system map 
\begin{equation}\label{eq:psi}
  \psi = \bigoplus  \psi_i : (\sF, \tau) \longrightarrow (\sE^{\af},
  \theta^{\af}), 
  \end{equation}
  and let $(\sG=\bigoplus \sG_i, \theta_{\sG})$ denote its image.

  By extending the result of Katz-Oda \cite{KO68} to the case of logarithmic relative
  de Rham complex and using \cite{Ste76}*{2.18} (cf.~\cite{Gri84}*{p.131}) one finds
  that $(\sE^0, \theta^0)$ is the logarithmic system of Hodge bundles underlying the
  Deligne canonical extension of the flat bundle
  $( \myR^m f_* \bC_{\wtilde X\setminus \Delta_{\wtilde f}} \otimes \sO_{B\setminus
    D_{\wtilde f}} , \nabla )$, with $\nabla$ denoting the Gauss-Manin connection.
  In particular $\theta^{\af}$ is $\sO_B$-linear and integrable.  Consequently, so is
  $\theta_{\sG}$, that is $(\sG, \theta_{\sG})$ is a system of Hodge sheaves.
  Furthermore, with $(\sE^{\af}, \theta^{\af})$ being locally free, the morphism
  $\psi: \sF \to \sE^{\af}$ factors through $\sG \to \sG^{**}$. We now define
\begin{equation}\label{eq:extension}
  ( \overline \sE, \overline \theta ) : = ( \sG , \theta_{\sG})^{**} . 
\end{equation}

The last part of \autoref{thm:DBI} follows from the construction of
$(\overline \sE, \overline \theta)$ and \autoref{thm:log-DB}\autoref{item:8}.  More
precisely, we have that
$\sF_0 \simeq \dR^0 f_* \underline\Omega^m_{X/B} \simeq f_*\omega_{X/B}$.
Furthermore, using the isomorphism $\pi_* \omega_{\wtilde X} \simeq \omega_X$, we
find that
\[
  \psi_0 : \sF_0 |_{B\setminus D_{\wtilde f}} \longrightarrow \sE_0^{a_{\wtilde
      f}}|_{B\setminus D_{\wtilde f}}
\]
is an isomorphism.  As $\sF_0$ is torsion free, this implies that $\psi_0$ is
injective. Therefore, $\sF_0$ can be identified with its image under $\psi_0$, which
we have denoted by $\sG_0$. In particular we have
$\sG_0^{**} \simeq \sF_0^{**} \simeq (f_*\omega_{X/B})^{**}$.  This completes the
proof of \autoref{thm:DBI} and \autoref{thm:DBI'}.  \qed

\subsection{Explanation for \autoref{thm:DBII}}
Let $f_U: U\to V$ be the smooth locus of
$f: X\to B$ and $i: U \to X$ and $j: V\to B$ the natural inclusion maps.

\begin{claim}\label{claim:ISOM}
$\overline \sE_l|_V  \simeq \myR^l f_* \Omega^{m-l}_{U/V}$ .
\end{claim}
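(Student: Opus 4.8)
The plan is to restrict the construction of $(\overline\sE,\overline\theta)$ from the proof of \autoref{thm:DBI'} to the open set $V$ and to observe that there it reduces to the classical Hodge bundle of the smooth projective family $f_U$. First I would note that $V$ is disjoint from the codimension $\geq 2$ subset of $B$ that was removed during the proof of \autoref{thm:DBI} and \autoref{thm:DBI'}: over $V$ the induced family $\wtilde f$ agrees with $f_U$, which is smooth and hence flat, so in particular $D_{\wtilde f}|_V=\emptyset$, the twist $\af\tightdot D_{\wtilde f}$ restricts to $0$ on $V$, and $\Delta_{\wtilde f}$ restricts to $\emptyset$ on $\wtilde f^{-1}(V)$. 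By the extension of Katz--Oda and the results of Steenbrink recalled in the proof of \autoref{thm:DBI'}, $(\sE^{\af},\theta^{\af})|_V=(\sE^{\bf 0},\theta^{\bf 0})|_V$ is the system of Hodge bundles underlying the variation of Hodge structure $\myR^m (f_U)_*\bC_U$; since $f_U$ is smooth \emph{and} projective, this is the classical one, namely $\sE^{\bf 0}_l|_V\simeq \myR^l (f_U)_*\Omega^{m-l}_{U/V}$ with its Griffiths Higgs field, and in particular each summand is locally free by the degeneration of the relative Hodge--de~Rham spectral sequence.

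Next I would identify $\sF_l|_V$. Since $\dR f_*$ commutes with restriction to open subsets of $B$, $\sF_l|_V\simeq \myR^l (f_U)_*\bigl(\underline\Omega^{m-l}_{X/B}|_U\bigr)$. Over $U$ the morphism $f_U\colon (U,\emptyset)\to (V,\emptyset)$ is an snc morphism, and the formation of $\underline\Omega^{m-l}_{X/B}$ is local on $X$, so $\underline\Omega^{m-l}_{X/B}|_U\simeq \underline\Omega^{m-l}_{U/V}$; combining \autoref{thm:log-DB}\autoref{item:5} with \autoref{thm:log-DB}\autoref{item:2} and \eqref{eq:3} for $j=0$ then gives $\underline\Omega^{m-l}_{U/V}\simeq\Omega^{m-l}_{U/V}$, an ordinary locally free sheaf in degree $0$. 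Hence $\sF_l|_V\simeq \myR^l (f_U)_*\Omega^{m-l}_{U/V}$, the same sheaf as $\sE^{\bf 0}_l|_V$.

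The key step, and the main obstacle, is to show that the system homomorphism $\psi=\bigoplus\psi_i$ of \eqref{eq:psi} restricts to an isomorphism over $V$. Recall that $\psi_i$ is obtained by applying $\dR f_*$ to the morphism of Koszul triangles $\varkappa_{m-i}$ of \autoref{cor:pullback}, passing to the long exact cohomology sequences, and comparing connecting maps, all under the identifications of \autoref{rem:triangles}. Over $V$, where the resolution $\pi$ is an isomorphism (by our choice of $\pi$, or in general via the vanishing theorem of Deligne and Esnault--Viehweg), the boundary divisors are empty and the twist is trivial, so $\varkappa_{m-l}|_V$ is, via the canonical quasi-isomorphisms of \autoref{thm:log-DB}\autoref{item:5} and \autoref{rem:triangles}, the identity morphism of the Koszul triangle $\underline\Kosz^{m-l}_{f_U}$; hence $\psi_l|_V$ is an isomorphism. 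Consequently its image $\sG_l|_V$ coincides with $\sF_l|_V\simeq \myR^l (f_U)_*\Omega^{m-l}_{U/V}$, which is locally free, hence reflexive. Since the reflexive hull commutes with restriction to open subsets, we conclude
\[
  \overline\sE_l|_V=(\sG_l)^{**}|_V\simeq (\sG_l|_V)^{**}\simeq \sG_l|_V\simeq \myR^l (f_U)_*\Omega^{m-l}_{U/V},
\]
which is the asserted isomorphism. The one point that needs care is bookkeeping the compatibility of all of these canonical identifications --- the filtered quasi-isomorphisms of \autoref{thm:log-DB}\autoref{item:5}, those of \autoref{rem:triangles}, and the base change maps for $\dR f_*$ along the open immersion $V\hookrightarrow B$ --- so that $\psi_l|_V$ is literally the identity; for the statement, however, any isomorphism is enough.
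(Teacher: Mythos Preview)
Your proof is correct and follows essentially the same approach as the paper: identify $\sF_l|_V$ with $\myR^l(f_U)_*\Omega^{m-l}_{U/V}$ via flat base change for the open immersion $j:V\hookrightarrow B$, the local nature of the construction of $\underline\Omega^{m-l}_{X/B}$, and \autoref{thm:log-DB}\autoref{item:5}, and then pass from $\sF_l|_V$ to $\overline\sE_l|_V$. The paper compresses the passage $\overline\sE_l|_V\simeq\sF_l|_V$ into the phrase ``by the definitions of $\overline\sE_l$ and $\psi$'', whereas you spell it out by checking that $\psi_l|_V$ is an isomorphism (using that $\pi$ is an isomorphism over $U$ and the twist vanishes on $V$) and that the resulting sheaf is locally free, hence already reflexive; this extra care is warranted but does not change the route.
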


\noindent
\emph{Proof of Claim~\ref{claim:ISOM}.}
This directly follows from flat base change and properties of complexes of relative
DB forms.  More precisely, we have

\begin{align*}
  \pushQED{\qed} 
  \overline \sE_l |_V    &  \simeq  j^* \myR^l f_*  \underline \Omega^{m-l}_{X/B}  
                           \text{, by the definitions of $\overline \sE_l$ in
                           (\ref{eq:SysDefs}) and $\psi$ in (\ref{eq:psi})}\\ 
                         &  \simeq  \myR^l f_* ( i^*  \underline \Omega^{m-l}_{X/B} )
                           \text{, using flat base change}\\ 
                         &   \simeq  \myR^l f_* \underline \Omega_{U/V}^{m-l}   
                           \text{, according to the construction}  \\ 
                         &   \simeq \myR^l f_*  \Omega^{m-l}_{U/V}   
                           \text{, by the quasi-isomorphism in
                           \autoref{thm:log-DB}\autoref{item:5}}
                           .  
                           \qedhere
                           \popQED
\end{align*}

Now, by construction we have
$i^* \underline \Kosz_{X/B}^p \simeq \underline\Kosz^p_{U/V}$.  Moreover, by
\autoref{rem:triangles}, we have a natural isomorphism of triangles
$\underline \Kosz^p_{U/V} \to \Kosz^p_{U/V}$, inducing the isomorphism
\begin{equation}\label{eq:LAST}
j^*\myR f_* \underline\Kosz^p_{X/B}  \longrightarrow  \myR f_* \Kosz^p_{U/V}  .
\end{equation}
Thanks to \cite{KO68} we know that the Higgs field underlying $\nabla$ (the
Gauss-Manin connection) is defined by the connecting homomorphisms of the long exact
cohomology sequence associated to the right-hand-side of \autoref{eq:LAST}. On the
other hand, by the construction of $(\overline \sE, \overline \theta)$, base change,
and \autoref{claim:ISOM}, the left-hand-side of \autoref{eq:LAST} similarly
determines $\overline \theta|_V$ in $(\overline \sE,\overline \theta)|_V$.
Therefore, we find that $(\overline\sE, \overline \theta)$ defines an extension of
the Hodge bundle underlying $\nabla$.  \qed

\subsection{Functorial properties} 
\label{ssect:funct}
The proof of \autoref{thm:DBII} already exhibits some of the 
functorial properties of 
the construction of $(\overline \sE, \overline \theta)$ in \autoref{thm:DBI}. 
This can be further formalized in the following way. 

Let $\mathfrak{Fam}(n,d)$ be the category of 
projective surjective morphisms $f:X\to B$ with connected fibers between an
$n$-dimensional reduced scheme $X$ and a smooth quasi-projective scheme $B$ of
dimension $d$. A morphism $(f': X'\to B') \to (f:X\to B)$ in $\mathfrak{Fam}(n,d)$ is
defined by a commutative diagram
$$
\xymatrix{
X'  \ar[rr]^{\gamma'}  \ar[d]_{f'}  &&   X  \ar[d]^f  \\
B'   \ar[rr]^{\gamma}           &&   B .
}
$$

Further let $\mathfrak{Hodge}(d,w)$ denote the category of triples
$(B, \sE, \theta)$, where $(\sE, \theta)$ is a system of reflexive Hodge sheaves of
weight $w$ on the smooth quasi-projective scheme $B$ of dimension $d$. A morphism
$\Gamma: (B', \sE,\theta') \to (B, \sE, \theta)$ in this category consists of a
morphism $\gamma: B'\to B$, such that the induced morphism
$\sE\to \dR \gamma_* \sE'$, fits into the commutative diagram
$$
\xymatrix{
\sE \ar[rr]  \ar[d]_{\theta}  &&   \dR \gamma_* \sE'  \ar[d]^{\dR \gamma_* \theta}  \\
\Omega^1_B\otimes \sE   \ar[rr]           &&   \dR\gamma_* (\Omega^1_B\otimes \sE').
}
$$

\subsubsection{Proof of \autoref{thm:funct}}
This directly follows from the construction of Koszul triangles in \ref{subsect:Koszul}
and the system $(\overline \sE, \overline \theta)$ in \ref{subsect:System}. 
More precisely, consider a log-resolution $\pi: \wtilde X\to X$
as in \autoref{notation:disc} . 
Similarly, let $\pi': \wtilde{X'} \to X'$ be a log-resolution 
factoring through the projection $\wtilde X \times_X X' \to X'$. By $\wtilde{f'}: \wtilde{X'}\to B'$ 
we denote the induced 
family. 
By construction, we have a commutative diagram of distinguished triangles 

$$
\xymatrix{ \underline \Kosz^p_{X/B} \ar[d]_{\varkappa_p} \ar[rr] && \dR
  \gamma'_*(\underline \Kosz^p_{X'/B'}) (a_f \cdot f^*D_{\wtilde f})
  \ar[d]^{\dR \gamma'_*\varkappa_p} \\
  \dR \pi_*\underline \Kosz^p_{\wtilde X/B} (\log \Delta_{\wtilde {f'}} ) \ar[rr] &&
  \dR \gamma'_* \big( \dR \pi'_* \underline \Kosz^p_{\wtilde{X'}/B'} (\log
  \Delta_{\wtilde {f'}} ) (a_{f'} \cdot {f'}^*D_{\wtilde{f'}})\big) .  }
$$
The theorem now follows by applying $\dR f_*$ to this diagram.

\section{Positivity of direct image sheaves and the discrepancy of the family}
\label{sect:Section4-Application}
\noindent
We will continue using the notation introduced in \autoref{notation:disc}.  First we
show a somewhat weaker version of \autoref{thm:Cons}.

\begin{proposition}
  \label{thm:App}
  Let $X$ and $B$ be two projective varieties of dimension $n$ and $d$, respectively,
  $f:X\to B$ a flat family of geometrically integral varieties with only Gorenstein
  Du~Bois singularities, such that $B$ is smooth and the generic fiber of $f$ has
  rational singularities.  Further let $D\subset B$ be an effective divisor
  satisfying $D\leq D_{\wtilde f}$, $D':= D_{\wtilde f}- D$, and let
  $r:= \rank(f_*\omega_{X/B})$.  Then, one of the following holds.
  \begin{enumerate}
  \item Either
    $c_1(\det (f_*\omega_{X/B}) ( - rn  D - D')) \cdot H^{d-1} \leq 0$, for
    some ample divisor $H\subset B$, or
  \item there exists a pseudo-effective line bundle $\sB$ on $B$ for which there is
    an injection
    \begin{equation*}
      \big(  (\det f_* \omega_{X/B} )  (-rn  D - D') \big)^t  \otimes \sB
      \hooklongrightarrow  (\Omega^1_B (\log  D') )^{\otimes N} , 
    \end{equation*}
    for some $t,N \in \bN$.
  \end{enumerate}
\end{proposition}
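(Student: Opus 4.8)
The plan is to run a Viehweg--Zuo iteration on the pole-free system of Hodge sheaves produced by \autoref{thm:DBI}, importing the required positivity through \autoref{thm:DBI'}. First I would apply \autoref{thm:DBI} and \autoref{thm:DBI'} with $w=m:=\dim(X/B)$ to a good resolution $\pi\colon\wtilde X\to X$ as in \autoref{notation:disc}, obtaining a system of reflexive Hodge sheaves $(\overline\sE=\bigoplus_{i=0}^m\overline\sE_i,\overline\theta)$ on $B$ whose Higgs field $\overline\theta\colon\overline\sE_i\to\Omega^1_B\otimes\overline\sE_{i+1}$ \emph{has no poles}. Since the generic fiber has rational singularities, the last clause of \autoref{thm:DBI} gives $\overline\sE_0\simeq(f_*\omega_{X/B})^{**}$ (any discrepancy between $\overline\sE_0$ and $(f_*\omega_{X/B})^{**}$ would be supported on $D_{\wtilde f}$ and absorbed by the twists below), so $\det\overline\sE_0\simeq\det f_*\omega_{X/B}$ and $r=\rank\overline\sE_0$; moreover \autoref{thm:DBI'} gives an inclusion of systems
\[
  (\overline\sE,\overline\theta)\ \subseteq\ (\sE^0,\theta^0)(\af\cdot D_{\wtilde f}),\qquad 0\le\af\le n=\dim X,
\]
where $(\sE^0,\theta^0)$ is the logarithmic Higgs bundle underlying the Deligne canonical extension of the geometric polarized $\bZ$-VHS of weight $m$ attached to the smooth locus of $\wtilde f$ (compare \parref{sssec:order-poles-lefsch}). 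Throughout I would work over the complement of a codimension $\ge 2$ subset of $B$ (the non-flat locus of $\wtilde f$ and the non-locally-free loci) and extend by reflexivity at the end, which is harmless for the determinant and intersection-number assertions.

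Two classical facts feed the argument: (i) $f_*\omega_{X/B}$, hence $\overline\sE_0$ and the line bundle $\det f_*\omega_{X/B}$, is weakly positive \cite{Viehweg95}; (ii) for a polarized VHS the kernels $\ker(\theta^0_i)\subseteq\sE^0_i$ are weakly negative (\cite{Gri84}, \cite{MR1040197}, \cite{Zuo00}, exactly as used in \parref{sssec:order-poles-lefsch}). Feeding (ii) through the inclusion above, each $\ker(\overline\theta_i)$ becomes weakly negative \emph{after twisting down by $\af\cdot D_{\wtilde f}$}; consequently, for any coherent subsheaf $\sK$ of such a kernel, the inverse determinant $(\det\sK)^{-1}$, twisted by at most $r\af\cdot D_{\wtilde f}\le rn\cdot D_{\wtilde f}$, is pseudoeffective.

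Now for the iteration. Passing to the $r$-th exterior power Higgs bundle $\big(\bigwedge^r\overline\sE,\bigwedge^r\overline\theta\big)$ (still pole-free) and twisting by $\sO_B(-rnD-D')$ realizes $\sN:=(\det f_*\omega_{X/B})(-rnD-D')$ as its degree-$0$ graded piece. Let $q\ge 0$ be the largest integer for which the $q$-fold iterate of $\bigwedge^r\overline\theta$ — which factors through $\operatorname{Sym}^q$ by integrability — restricts to a nonzero map
\[
  \rho\colon\ \sN\ \longrightarrow\ \operatorname{Sym}^q\Omega^1_B\otimes\sK_q ,
\]
where $\sK_q$ is the image of $\sN$ under that iterate; by maximality $\sK_q\subseteq\ker\big((\bigwedge^r\overline\theta)_q\big)$, so it carries the weak-negativity estimate of the previous paragraph. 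If $q=0$, then $\bigwedge^r\overline\theta$ annihilates $\sN$, which forces $\overline\theta_0=0$; then $\overline\sE_0$ is simultaneously weakly positive and — after the $\af\cdot D_{\wtilde f}$ twist — weakly negative, confining $c_1(\det f_*\omega_{X/B})$ to the cone spanned by the components of $D_{\wtilde f}=D+D'$ with coefficients $\le r\af\le rn$; intersecting with a suitable ample $H$ and using the extra $-D'$ in the definition of $\sN$ then yields alternative~(1). If $q\ge 1$, I would pass to the adjoint $\sN\otimes\sK_q^\vee\to\operatorname{Sym}^q\Omega^1_B\subseteq(\Omega^1_B(\log D'))^{\otimes q}$, take the exterior power of this morphism along the rank of $\sK_q^\vee$ (after checking that it is generically of maximal rank, so that this exterior power is injective), and absorb the twist $\sO_B(-rnD-D')$ into appropriate powers of $\sN$; the outcome is an injection $\sN^{t}\otimes\sB\hooklongrightarrow(\Omega^1_B(\log D'))^{\otimes N}$ with $\sB$ the pseudoeffective line bundle supplied above — alternative~(2).

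The conceptual skeleton is routine; the real work is the accounting in the last step — tracking the discrepancy bound $\af\le n$, the rank factor $r$ introduced by passing to determinants, and the splitting $D_{\wtilde f}=D+D'$ — so that one lands \emph{exactly} on $(\det f_*\omega_{X/B})(-rnD-D')$ with a genuinely pseudoeffective $\sB$ (the role of the seemingly \emph{ad hoc} $-D'$ being precisely to buy this), together with the genericity argument ensuring that the exterior-power map used to peel off the weakly negative factor $\sK_q$ is an honest injection rather than merely nonzero. That $\overline\theta$ is pole-free — the content of \autoref{thm:DBI} — is exactly what allows the iteration to live on $B$ itself and the target to be $(\Omega^1_B(\log D'))^{\otimes N}$ with the \emph{smaller} boundary $D'$; this is then combined with \cite{CP16} in the proof of \autoref{thm:Cons}.
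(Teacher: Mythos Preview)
Your approach differs from the paper's in two structural respects, and the twist accounting you flag as ``the real work'' does not close.

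The paper does \emph{not} iterate on the pole-free system $(\overline\sE,\overline\theta)$ of \autoref{thm:DBI} for $f:X\to B$ and then pass to $\bigwedge^r$. Instead it (a) replaces $f$ by the $r$-fold fiber product $f^r:X^r\to B$, using the inclusion $\det f_*\omega_{X/B}\hookrightarrow f^r_*\omega_{X^r/B}$ (valid since $f$ is flat Gorenstein; $X^r$ still has rational singularities), and (b) builds the system for $X^r$ using the \emph{refined} logarithmic variant of \autoref{rk:refined} with boundary $D'$: one sets $\sF_i=\dR^i f^r_*\big(\underline\Omega^{rn'-i}_{X^r/B}(\log\Delta'/D')\big)$, so that $\tau$ already lands in $\Omega^1_B(\log D')$. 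The comparison map to the Deligne extension then needs a twist only along $D$ (not $D_{\wtilde f}$), bounded by $\dim X^r\le rn$, and its image $(\sG,\theta)$ sits \emph{directly} inside $(\sE^0,\theta^0)$ with no twist on the target. Hence the kernels $\sN_k\subseteq\ker(\theta^0|_{\sE^0_k})$ are weakly negative with no correction term, and $(\det\sN_k)^{-1}$ is pseudoeffective on the nose.

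Your exterior-power route misses both ingredients, and the bookkeeping breaks. Taking $\bigwedge^r$ of the inclusion in \autoref{thm:DBI'} yields $\bigwedge^r\overline\sE\subseteq(\bigwedge^r\sE^0)(r\af D_{\wtilde f})$; a subsheaf $\sK$ of a kernel of $\bigwedge^r\overline\theta$ is then weakly negative only after twisting down by $r\af D_{\wtilde f}$, and the correction on $(\det\sK)^{-1}$ is $\rank(\sK)\cdot r\af\,D_{\wtilde f}$, not $r\af\,D_{\wtilde f}$ as you wrote. More fundamentally, working with the pole-free system forces the \emph{entire} discriminant $D_{\wtilde f}=D+D'$ into the discrepancy twist; unwinding gives at best $(\det f_*\omega_{X/B})(-rn\,D_{\wtilde f})$, not $(\det f_*\omega_{X/B})(-rnD-D')$. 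The asymmetric coefficients ($rn$ on $D$, $1$ on $D'$) in the statement are precisely what the refined $\log\Delta'/D'$ construction buys, and that is the missing idea. Your $q=0$ case has the same defect: from $\overline\sE_0\subseteq\ker(\theta^0_0)(\af D_{\wtilde f})$ you only get $c_1(\det f_*\omega_{X/B})\le r\af\,D_{\wtilde f}\le rn(D+D')$ numerically, which does not imply $c_1\big((\det f_*\omega_{X/B})(-rnD-D')\big)\cdot H^{d-1}\le 0$.
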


\begin{remark}
  The above result remains valid if we replace $nr$ by the discrepancy of
  $D_{\wtilde f}$ with respect to $f^r$. We opted to avoid a cumbersome notation, and
  instead use the upperbound $rn$, cf.~\autoref{thm:DBI'}.
\end{remark}

\begin{remark} 
  One may also replace $f_*\omega_{X/B}$ in \autoref{thm:App} by any of its
  subsheaves (and of course replace $r$ with the corresponding rank).  This is of
  interest for example in the setting of Fujita's Second Main Theorem (see
  \cite{CD17} and references therein).
\end{remark}

Before proving~\autoref{thm:App}, we recall the following well-known fact
regarding the functoriality of canonical extensions.

\begin{fact}\label{Functorial}
  Let $f:X\to B$ be a projective morphism of smooth quasi-projective varieties $X$
  and $B$.  Assume that $D_f$ and $\Delta:= f^{-1} D_f$ are simple normal crossing
  divisors.  Let $\gamma: C\to B$ be a morphism of smooth quasi-projective
  varieties. Let $X_C$ be a strong resolution of $X\times_B C$, with $f_C: X_C \to C$
  being the naturally induced family.  Assume that the support of $D_{f_C}$ and
  $\Delta_{f_C}$ are simple normal crossing divisors.  Let
  $(\sE^0_C = \bigoplus \sE^0_{C,i}, \theta^0_C)$ be the associated system of Hodge sheaves underlying 
  Deligne extension of the
  local system $\R^j f_* \bC_{X_C \setminus \Delta_{f_C}}$ of any given weight
  $j$. Then, as systems of Hodge sheaves, we have an inclusion

  \[
    \gamma ^* (\sE^0, \theta^0) \subseteq ( \sE^0_C, \theta^0_C ) ,
  \]
  which is an isomorphism over the flat locus of $\gamma|_{C\setminus D_{f_C}}$.
\end{fact}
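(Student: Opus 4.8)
The plan is to reduce the statement to the functoriality of the Deligne canonical extension with respect to a base change, which is classical, and then upgrade that to a statement about the underlying Hodge sheaves (the graded pieces $\sE^0_i$ together with the Higgs field $\theta^0$) by keeping track of how the de Rham realization interacts with pullback. First I would recall that over the open sets $B\setminus D_f$ and $C\setminus D_{f_C}$, the systems $(\sE^0,\theta^0)$ and $(\sE^0_C,\theta^0_C)$ are the Hodge bundles underlying the variations $\R^j f_* \bC$ and $\R^j (f_C)_* \bC$, respectively, and that over $\gamma^{-1}(B\setminus D_f)\cap(C\setminus D_{f_C})$ one has a canonical isomorphism $\gamma^* \R^j f_* \bC \simeq \R^j (f_C)_* \bC$ compatible with the Gauss--Manin connections, hence an isomorphism of the pulled-back Hodge filtration with the Hodge filtration of the base-changed VHS. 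This gives the asserted isomorphism over the flat locus of $\gamma|_{C\setminus D_{f_C}}$ for free. The content of the statement is therefore entirely about what happens along $D_{f_C}$, i.e.\ how the two canonical extensions compare when one crosses the discriminant.

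Next I would invoke the compatibility of Deligne's canonical extension with pullback. Concretely: if $(\mathcal V,\nabla)$ is a flat bundle on $B\setminus D_f$ with unipotent (or, in general, quasi-unipotent with eigenvalues in the chosen half-open interval) local monodromy, and $\gamma\colon C\to B$ is a morphism of smooth varieties with $\gamma^{-1}D_f$ a normal crossing divisor contained in $D_{f_C}$, then the canonical extension $\mathcal V^0$ pulls back to a subsheaf of the canonical extension on $C$ of $\gamma^*\mathcal V$ — with equality away from the extra boundary components of $D_{f_C}$ not coming from $\gamma^{-1}D_f$, and a genuine inclusion along those extra components because the residue eigenvalue normalization forces the canonical extension on $C$ to be ``no smaller''. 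This is the eigenvalue bookkeeping underlying the parabolic-filtration formalism recalled in the introduction (the $\mathcal V^{\boldsymbol\beta}$ notation), applied with the fixed interval $[\beta_i,\beta_{i+1})$; one compares $\gamma^*(\mathcal V^{-a_f})$ with $(\gamma^*\mathcal V)^{-a_{f_C}}$. Since the Hodge filtration also extends (by Schmid's nilpotent orbit theorem and the Cattani--Kaplan--Schmid results, as cited) and its graded pieces are vector bundles whose formation commutes with the pullback on the canonical-extension level, the inclusion $\gamma^*\mathcal V^0 \hookrightarrow \mathcal V^0_C$ induces, filtered piece by filtered piece, inclusions $\gamma^*\sE^0_i \hookrightarrow \sE^0_{C,i}$, and these assemble to a morphism of graded sheaves.

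Finally I would check compatibility with the Higgs fields, i.e.\ that $\theta^0_C$ restricted to the image of $\gamma^*\sE^0$ agrees with $\gamma^*\theta^0$ up to the inclusion $\gamma^*\Omega^1_B(\log D_f)\to\Omega^1_C(\log D_{f_C})$. This is immediate over the flat locus, where both Higgs fields are $\gr$ of the Gauss--Manin connection and the isomorphism of VHS's is flat; and since $\sE^0_{C,i}$ is locally free (hence torsion-free) and the locus where we do not yet know the identity has codimension $\ge 1$, the equality of the two maps on a dense open set forces it everywhere. Thus the morphism $\gamma^*(\sE^0,\theta^0)\to(\sE^0_C,\theta^0_C)$ is a morphism of systems of Hodge sheaves, it is injective (being injective on the flat locus with torsion-free target), and it is an isomorphism over the flat locus of $\gamma|_{C\setminus D_{f_C}}$, as claimed. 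The main obstacle I anticipate is the second step: being careful about the eigenvalue normalization so that the comparison of canonical extensions is genuinely an \emph{inclusion} in the right direction (rather than the reverse) along the boundary divisors of $D_{f_C}$ that are not dominated by $\gamma^{-1}D_f$ — this is where one must use that $[\beta_i,\beta_{i+1})$ is half-open and fixed, and that a strong resolution $X_C$ of $X\times_B C$ has been chosen so that its discriminant and the preimage divisor are snc, so that the local monodromies one is comparing are themselves in the correct normalized form.
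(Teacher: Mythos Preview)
The paper records this statement as a ``well-known fact regarding the functoriality of canonical extensions'' and gives no proof, so there is no argument in the paper to compare against. Your outline is the standard one and is essentially correct: smooth-proper base change identifies the two VHSs over the common open locus; the residue-eigenvalue normalization gives the inclusion $\gamma^*\mathcal V^0\subseteq(\gamma^*\mathcal V)^0$ of Deligne extensions in the right direction; Schmid and Cattani--Kaplan--Schmid make the Hodge filtration a subbundle filtration of the extension so that the inclusion passes to the graded pieces; and the torsion-freeness argument handles compatibility of $\theta$. The only point you slide over is the identification of $(\gamma^*\mathcal V)^0$ with $\mathcal V^0_C$---these are canonical extensions of a priori different local systems, namely the pullback of $\R^jf_*\bC$ versus $\R^j(f_C)_*\bC$---but this follows once you note that a strong resolution $X_C\to X\times_B C$ is an isomorphism over the smooth locus of the fibre product, so the two local systems agree on the complement of the snc divisor $D_{f_C}$, and the canonical extension across a fixed snc boundary depends only on that restriction.
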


\noindent
\emph{Proof of \autoref{thm:App}.}  Let
$\sL: = (\det f_*\omega_{X/B}) (- r n D - D')$ and assume that for some ample
divisor $H\subset B$ we have
\begin{equation}\label{Assump}
  c_1(\sL) \cdot H^{d-1}>0 .
\end{equation}
Let $X^r$ denote the $r$th fiber product $X\times_B \dots \times_B X$ ($r$ times)
with the resulting morphism $f^r: X^r\to B$.  Now, as $f$ is Gorenstein and flat we
have
\[
  \bigotimes^r f_*\omega_{X/B} \simeq f_*^r \omega_{X^r/B}.
\]
Note, that $X^r$ has rational singularities by \cite{KS13}*{Theorem~E}
(cf.~\cite{Elkik78}).

Let $n':= n-d$. By slightly modifying $\sF_i$ in \autoref{eq:SysDefs} we set 
$\sF_i:= \dR^i f^r_* ( \underline \Omega^{rn'-i}_{X^r/B} (\log \Delta'/D') )$, 
where $\Delta':= f^{-1} D'$.
Since $X^r$ has only rational singularities, by 
\autoref{thm:log-DB}\autoref{item:8} we have 
$$  f^r_* \omega_{X^r/B}\big( \Delta' - (f^r)^* D' \big)    \hookrightarrow \sF_0  .$$
From the natural inclusion
$\det f_*\omega_{X/B} \subseteq f^r_* \omega_{X^r/B}$ it follows that there is an
injection
\begin{equation}\label{INJ}
  \sL \longrightarrow \sF_0 (- rn D) .
\end{equation}

To simplify our notation, we will replace $f$ by $f^r$ in the sequel. Similar to
\autoref{triangle1} we have a morphism of triangles (\autoref{rk:refined})
\[
  \varkappa_{rn'-i}: \underline\Kosz_f^{rn'-i} (\log \Delta') \longrightarrow \dR
  \pi_* \underline \Kosz^{rn'-i}_{\wtilde f} (\log \Delta_{\wtilde f}) \otimes
  f^*(rn D),
\]
inducing a morphism of systems 
\[  
  \psi = \bigoplus \psi_i : (\sF, \tau) ( - rn D ) \longrightarrow (\sE^0, \theta^0),
\]
whose weight is equal to the relative dimension. 
Denote the image of $\psi$ by
$(\sG=\bigoplus \sG_i, \theta) \subseteq (\sE^0, \theta^0)$,
$\theta: \sG_i \to \Omega^1_B(\log D')\otimes \sG_{i+1}$.  By construction
$\psi_0$ is injective and thus $\sL \hooklongrightarrow \sG_0$.

\begin{claim}\label{claim:maps}
  $\theta(\sL) \neq 0$.
\end{claim}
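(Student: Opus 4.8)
The plan is to prove the claim by contradiction, exploiting the weak negativity of kernels of Higgs fields together with the numerical positivity \eqref{Assump} of $\sL$. So suppose that $\theta(\sL)=0$ and work towards a contradiction.

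First I would unwind what $\theta$ is. By construction $(\sG,\theta)$ is the image of the morphism of systems $\psi$, hence a sub-system of $(\sE^0,\theta^0)$, the system of Hodge sheaves underlying the Deligne canonical extension of the variation $\myR^{rn'}\wtilde f_*\bC_{\wtilde X\setminus\Delta_{\wtilde f}}$ on $B\setminus D'$; in particular $\sG$ is $\theta^0$-stable and $\theta$ is the restriction of the logarithmic Higgs field $\theta^0$ to $\sG$. Since $\psi_0$ is injective we have $\sL\hooklongrightarrow\sG_0\subseteq\sE^0_{0}$, and so the hypothesis $\theta(\sL)=0$ says precisely that the composite $\sL\hooklongrightarrow\sE^0_{0}\xrightarrow{\ \theta^0\ }\Omega^1_B(\log D')\otimes\sE^0_{1}$ vanishes. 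Equivalently, $\sL$ is a rank-one subsheaf of
\[
\cK:=\ker\bigl(\theta^0|_{\sE^0_{0}}\colon\sE^0_{0}\longrightarrow\Omega^1_B(\log D')\otimes\sE^0_{1}\bigr),
\]
which is a direct summand of the kernel of the full Higgs field $\theta^0$.

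Next I would invoke the fundamental negativity input: the kernel of the Higgs field of a polarized variation of Hodge structures of geometric origin is a weakly negative sheaf (Griffiths' curvature computation; see \cite{Gri84}, \cite{MR1040197}, \cite{Zuo00}, and compare the footnote and the argument in \ref{sssec:order-poles-lefsch} as well as \cite{Viehweg95}*{2.3}). Hence $\cK$ is weakly negative, i.e.\ $\cK^\vee$ is weakly positive. Restricting to a general complete intersection curve $C=H_1\cap\dots\cap H_{d-1}$ with $H_i\in|H|$, weak positivity of $\cK^\vee$ implies that $\cK^\vee|_C$ is nef, so (passing to the saturation of $\sL|_C$ in $\cK|_C$, which only increases degree) every rank-one subsheaf of $\cK|_C$ has non-positive degree. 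For general $C$ the inclusion $\sL|_C\hooklongrightarrow\cK|_C$ is still a rank-one subsheaf, and therefore $c_1(\sL)\cdot H^{d-1}=\deg(\sL|_C)\leq 0$, contradicting \eqref{Assump}. This will establish the claim.

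The only non-formal point — and the step where I expect the most care to be needed — is verifying that the quoted weak-negativity statement applies in our precise situation: $\sE^0$ is only the logarithmic canonical extension across the snc divisor $D'$, the local monodromy of the variation around $D'$ need not be trivial, and $X^r$ is singular. This is not a genuine obstruction: the variation itself lives on the smooth quasi-projective base $B\setminus D'$ and is of geometric origin, as it arises from the honest smooth projective family given by the smooth locus of $\wtilde f$, so Griffiths' negativity of $\cK$ holds over $B\setminus D'$ and propagates to the canonical extension $\cK$ over $B$ by the Schmid / Cattani–Kaplan–Schmid norm estimates underpinning the cited references. I would also stress that, unlike the Lefschetz-pencil discussion in \ref{sssec:order-poles-lefsch} where $\sL_j$ is merely nef and one must bring in a local Torelli theorem, here $\sL$ already satisfies $c_1(\sL)\cdot H^{d-1}>0$, so mere membership of $\sL$ in a weakly negative sheaf suffices to reach the contradiction with no appeal to any (infinitesimal or local) Torelli statement.
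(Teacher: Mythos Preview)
Your argument is correct and follows the same core strategy as the paper: assume $\sL$ lands in the kernel of the Higgs field, invoke Zuo's weak negativity for that kernel, and contradict the positivity \eqref{Assump} on a general complete-intersection curve $C\in |H|^{d-1}$.

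The one difference worth noting is exactly the technical point you flag in your last paragraph. You apply Zuo's weak negativity directly to $\ker(\theta^0)$ on $B$ and then restrict to $C$, relying on the Schmid/Cattani--Kaplan--Schmid norm estimates to justify that negativity extends across the logarithmic boundary. The paper instead takes the curve first and invokes \autoref{Functorial}: it base-changes the family to $C$, resolves, and obtains a new logarithmic Hodge system $(\sE^0_C,\theta^0_C)$ on $C$ into which $\gamma^*(\sE^0,\theta^0)$ embeds compatibly; then Zuo's result is applied on the curve $C$ to $\ker(\theta^0_C)$. This buys a cleaner citation---on $C$ one is squarely in the setting of \cite{Zuo00} without having to discuss how weak negativity of the extended kernel behaves across the full divisor $D_{\wtilde f}$---whereas your route is more direct but requires the extra justification you supply. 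Both are valid.
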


\begin{proof}[{Proof of \autoref{claim:maps}}]
  Aiming for a contradiction, assume that $\theta ( \sL) =0$.  Let
  $\overline \sL$ denote the saturation of the image of the injection
  \autoref{INJ}. Set $C\subseteq B$ to be the smooth, complete intersection curve
  defined by $H^{d-1}$ with the natural inclusion map $\gamma: C\to B$.  For a
  suitable choice of $C$ we can ensure that $C$ is in the locus of $B$ over which
  $\overline\sE_0 / \overline \sL$ is locally free.  Let $(\sE_C, \theta_C)$ be the
  logarithmic Hodge system defined in \autoref{Functorial}. According to
  \autoref{Functorial} we have an injection
  \begin{equation}\label{eq:compatible}
    \gamma^* (\sE^0 = \bigoplus \sE_i^0 , \theta^0)  \hooklongrightarrow  (\sE_C^0 =
    \bigoplus \sE^0_{C,i}, \theta_C^0). 
  \end{equation}
  In particular we have an injection 
  \[
    \gamma^* \overline \sL \hooklongrightarrow \sE^0_{C,0} .
  \]
  From our initial assumption it follows that $\theta_{C,0} (\gamma^*\overline \sL) =0$.
  On the other hand, since $\ker ( \theta^0_C |_{\sE^0_{C,i}} )$ is weakly negative
  by \cite{Zuo00}, this implies that $\deg(\gamma^* \overline \sL|_C ) \leq 0$ and thus
  contradicting our initial assumption \autoref{Assump}. This finishes the proof of
  the claim.
\end{proof}

Now, by applying $\theta$ to $\sL$ we can find an integer $k\geq 1$ such that
\[
  \underbrace{(\id \otimes \theta) \circ \dots \circ (\id \otimes
    \theta)}_{\text{$k-1$ times}} \otimes \theta :\sL \hooklongrightarrow
  (\Omega^1_B(\log D'))^{\otimes k} \otimes \sN_k ,
\]
where $\sN_k: = \ker ( \theta|_{\sG_k} )$.  
As $\sN_k\subseteq \ker (\theta^0|_{\sE^0_k})$
and since $\ker (\theta^0|_{\sE^0_k})$ is weakly
negative \cite{Zuo00}, we find that there is an injection
$\sL^t \otimes \sB \hooklongrightarrow (\Omega^1_B (\log D'))^{\otimes N}$, for some $t\in \bN$
and pseudo-effective $\sB:= (\det \sN_k)^{-1}$.

\subsection{The general case}
\label{subsec:case-mth-direct}
By using a cyclic covering construction (see also \cite{Vie-Zuo03a}, \cite{Taj18} and
\cite{Taj20}), combined with the constructions in
\autoref{sect:Section3-DBExtension}, we generalize \autoref{thm:App} to the
pluricanonical case.

\begin{theorem}\label{thm:TheEnd}
  Using the notation and assumptions of \autoref{thm:App}, for every $m\in \bN$, let
  $r_m =\rank( f_*\omega^m_{X/B} )$ and set $t_m: = m r_m n$.  Then, for any
  $a\in \bN$, either
  \begin{enumerate}
  \item
    $c_1( (\det f_*\omega^m_{X/B})^a( - a t_m  D - D' )) \cdot H^{d-1} \leq 0 $,
    or
  \item \label{2Inj}there exists a pseudo-effective line bundle $\sB$ on $B$ for
    which there exists an injection
    \[
      \big( ( \det f_*\omega^m_{X/B} )^a( - a t_m D - D' ) \big)^t \otimes \sB
      \hooklongrightarrow (\Omega^1_B (\log D') )^{\otimes N},
    \]
  \end{enumerate}
  for some $t, N \in \bN$.
\end{theorem}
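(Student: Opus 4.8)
The plan is to reduce Theorem~\ref{thm:TheEnd} to Proposition~\ref{thm:App} by a cyclic-covering construction that replaces the pluricanonical sheaf $f_*\omega^m_{X/B}$ with a genuine relative dualizing sheaf of an auxiliary family. First I would work with the $a r_m$-fold fiber product $X^{(ar_m)} = X \times_B \cdots \times_B X$ with induced map $f^{(ar_m)}$; since $f$ is flat and Gorenstein, $(f^{(ar_m)})_*\omega^m_{X^{(ar_m)}/B} \simeq \bigotimes^{ar_m} f_*\omega^m_{X/B}$, and by \cite{KS13}*{Theorem~E} (cf.~\cite{Elkik78}) $X^{(ar_m)}$ still has rational singularities on the generic fiber and Gorenstein Du~Bois singularities overall, so the hypotheses of \autoref{thm:App} are preserved. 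The point of passing to this fiber product is that $\det f_*\omega^m_{X/B}$ (to the power $ar_m \cdot \text{something}$, after adjusting) appears as a subsheaf of a tensor power of the relative dualizing sheaf — more precisely $(\det f_*\omega^m_{X/B})^{a} \hookrightarrow \bigotimes^{a r_m} f_*\omega^m_{X/B} \simeq (f^{(ar_m)})_*\omega^m_{X^{(ar_m)}/B}$ via the natural "determinant into tensor power" inclusion followed by the multiplicativity above.

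**The cyclic cover.** The next step, and the technical heart, is to convert $\omega^m$ into $\omega$. Following \cite{Vie-Zuo03a}, \cite{Taj18}, \cite{Taj20}: choose a general divisor in a suitable multiple of the relevant line bundle and take the associated $m$-th cyclic covering $Z \to X^{(ar_m)}$, normalized; then a resolution $Z' \to Z$ gives a new flat family $h: Z' \to B$ whose relative dualizing sheaf $h_*\omega_{Z'/B}$ contains (a twist of) $(f^{(ar_m)})_*\omega^m_{X^{(ar_m)}/B}$ as a direct summand, after removing from $B$ a closed subset of codimension $\geq 2$. One must check that $Z'$ can be arranged to have Gorenstein Du~Bois singularities with rational singularities on the generic fiber so that \autoref{thm:App} applies to $h$ directly; the known references handle exactly this kind of bookkeeping. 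Then I would feed $h: Z' \to B$, together with the effective divisor $D$ (pulled back / adjusted appropriately) and $D' = D_{\widetilde{h}} - D$ matched up with the original $D'$, into \autoref{thm:App}. The output dichotomy for $h$ — either $c_1(\det h_*\omega_{Z'/B}(-r_h n_h D - D'))\cdot H^{d-1}\leq 0$ or an injection of a power of that twisted determinant, tensored by a pseudo-effective line bundle, into $(\Omega^1_B(\log D'))^{\otimes N}$ — then gets translated back: the determinant $\det h_*\omega_{Z'/B}$ is, up to a fixed positive twist and a bounded correction from the covering, a positive multiple of $\det f_*\omega^m_{X/B}$, and the rank $r_h$ and relative dimension combine to give precisely the coefficient $t_m = m r_m n$ in the statement (this is where one must verify the arithmetic of ranks under fiber product and cyclic cover, using $\rank(f_*\omega^m_{X/B}) = r_m$).

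**Conclusion and the main obstacle.** Once the translation is done, option (1) of \autoref{thm:App} for $h$ yields option (1) of Theorem~\ref{thm:TheEnd}, and option (2) yields option (2): the injection $\sL_h^{t}\otimes \sB \hookrightarrow (\Omega^1_B(\log D'))^{\otimes N}$ becomes, after substituting $\sL_h = (\det f_*\omega^m_{X/B})^a(-at_m D - D')$ up to the covering correction and absorbing that correction into the pseudo-effective line bundle $\sB$, exactly the claimed injection \eqref{2Inj}; pseudo-effectivity is preserved because the correction term is a pullback of a nef/semiample bundle coming from the branch divisor choice, or can be made so by a general choice. I expect the main obstacle to be the precise control of the cyclic-covering construction in the \emph{relative} and \emph{singular} setting: one needs $Z' \to B$ to remain flat with Gorenstein Du~Bois total space and rational generic fiber, and one needs the inclusion $f_*\omega^m_{X/B} \hookrightarrow h_*\omega_{Z'/B}$ (twisted) to be compatible with the determinant and with the divisor $D'$ bookkeeping — the numerology of $t_m = m r_m n$ must come out exactly, which constrains how the auxiliary divisor in the covering is chosen. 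This is the step where the cited works \cite{Vie-Zuo03a}, \cite{Taj18}, \cite{Taj20} do the analogous labor in nearby situations, and the task is to push it through verbatim in the present Du~Bois/Gorenstein generality, using \cite{KS13} and \cite{Elkik78} for the singularity inheritance and \autoref{thm:log-DB}\autoref{item:8} together with the rational-singularities hypothesis for the identification $h_*\omega_{Z'/B}(\cdots) \hookrightarrow \sF_0$.
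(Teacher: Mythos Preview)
Your proposal captures the right ingredients (fiber product, cyclic cover) but has a genuine gap in the ``translate back'' step. You propose to apply \autoref{thm:App} as a black box to the resolved cyclic cover $h:Z'\to B$ and then relate $\det h_*\omega_{Z'/B}$ to $(\det f_*\omega^m_{X/B})^a$. That relation does not hold in the form you need. What one actually obtains from the cyclic-cover eigenspace decomposition is an inclusion $\sA_m \hookrightarrow g_*\omega_{Z/B}$ for the \emph{unresolved} cover $g:Z\to B$; after resolving, one only has $h_*\omega_{Z'/B}\hookrightarrow g_*\omega_{Z/B}$, which goes the wrong way (and $Z$ has no reason to have rational singularities, since the branch divisor $\mathrm{div}(s)\in|\mu^*\sM^m|$ is uncontrolled). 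So neither the determinant comparison nor the rank-one subsheaf variant of \autoref{thm:App} applies cleanly to $h$. A secondary issue: the fiber product should be $m r_m a$-fold, not $a r_m$-fold, so that $(\det f_*\omega^m_{X/B})^{ma}=\sA_m^m$ embeds and the line bundle $\sM=\omega_{X^{mr_ma}/B}\otimes f^*\sA_m^{-1}$ genuinely has $H^0(\sM^m)\neq 0$.

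The paper does not treat \autoref{thm:App} as a black box. It reruns the argument with a \emph{twisted} Du~Bois system on the fiber product itself (not on $Z'$): set $\sF'_i:=\R^i f_*\big(\underline\Omega^{at'_m-i}_{X/B}(\log\Delta'/D')\otimes\sM^{-1}\big)$. The twist is designed so that $\omega_{X^{mr_ma}/B}\otimes\sM^{-1}=f^*\sA_m$, whence \autoref{thm:log-DB}\autoref{item:8} and rationality of singularities give $\sA_m(-D')\hookrightarrow\sF'_0$ tautologically. The cyclic cover is used only to manufacture the \emph{ambient} Hodge system: because $\sigma^*\mu^*\sM^{-1}\hookrightarrow\sO_Z$ by construction, the twisted Koszul triangle on $\wtilde X$ maps to the untwisted one on $Z$, yielding $\psi^{\sM}:(\sF',\tau')(-at_m D)\to(\sE^0,\theta^0)$ with $(\sE^0,\theta^0)$ the Deligne extension for $g:Z\to B$. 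From there the endgame of \autoref{thm:App} (Zuo negativity of $\ker\theta^0$, iterate $\theta$) runs verbatim. The conceptual point you are missing is that the cyclic cover enters only as a receptacle for the map of Koszul triangles, not as a family to which \autoref{thm:App} is reapplied.
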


\begin{proof}
  Using the notation of the proof of \autoref{thm:App}, consider the natural
  injection
  \[
    \big( \det f_*\omega^m_{X/B} \big)^{ma} \hooklongrightarrow f_*^{m r_m a}
    \omega^m_{X^{m r_m a}/B},
  \]
  which, with $\sA_m: =(\det f_*\omega^m_{X/B})^a$, implies that for the line bundle
  $\sM$ defined by
  \[
    \sM : = \omega_{X^{m r_m a}/B} \otimes ( f^{m r_m a} )^* ( \sA_m )^{-1}
  \]
  we have $H^0( X^{m r_m a} , \sM^m )\neq 0$.
  Following \autoref{notation:disc}, $\pi:\wtilde X\to X$ denotes a good resolution
  of $X$ and further let $\wtilde X^{(m r_m a)}$ denote a strong resolution of
  $(\wtilde X)^{m r_m a}$.  The composition of this latter resolution with
  $\pi^{m r_m a}$ induces a projective birational morphism
  $\mu: \wtilde X^{(m r_m a)} \to X^{m r_m a}$.

  Note that the assumptions on the singularities of the fibers of $f$ remain true for
  the fibers of $f^{m r_m a}$, so
  for ease of notation, let us replace $f^{m r_m a}$ by $f$ and, similarly, replace
  $\wtilde X^{(m r_m a)}$ by $\wtilde X$.

  Next, define $n':= n-d$, $t'_m:= m r_m n'$ and modify the system $(\sF, \tau)$,
  defined in \autoref{eq:SysDefs}, by setting
  \[
    \sF_i' : = \R^i f_* ( \underline \Omega_{X/B}^{at'_m -i} (\log \Delta'/D')
    \otimes \sM^{-1} ) ,
  \]
  where $\Delta': = f^{-1} (D')$. We get a morphism of triangles
  \begin{equation}\label{eq:STAR}
    \begin{aligned}
      \varkappa_{at'_m-i} : \underline \Kosz_f^{at'_m-i} (\log \Delta') \otimes
      \sM^{-1} & \longrightarrow \\ \longrightarrow \dR \mu_* & \big( \underline
      \Kosz^{at'_m-i}_{\wtilde f} (\log \Delta_{\wtilde f}) \otimes \mu^*\sM^{-1}
      \big) (at_m\, f^* D),
    \end{aligned}
  \end{equation}
  cf.~ \autoref{rk:refined}.  Now, let $\sigma: Z\to \wtilde X$ be a resolution of
  singularities of the cyclic covering associated to a global section of
  $\mu^* \sM^m$ (cf.~\cite{Laz04-I}*{4.1.6}) and let $g: Z\to B$ be the induced
  map. 
  After removing a subscheme of $B$ of $\codim_B\geq 2$ we may assume 
  that $g:(Z, \Delta_g) \to (B,D_g)$ is snc. 
  
  \begin{claim}\label{anti-effective}
  In the setting above, there is a natural morphism of triangles 
  \begin{equation}\label{eq:2ndTri}
    \underline \Kosz^{at'_m-i}_{\wtilde f} (\log \Delta_{\wtilde f}) \otimes \mu^* \sM^{-1} 
    \longrightarrow  \dR \sigma_*   \underline \Kosz_g^{at'_m-i} (\log \Delta_g)
    .
  \end{equation}
  \end{claim}
  
  \noindent
  \emph{Proof of \autoref{anti-effective}.}
  By using \autoref{rk:refined} with $\Delta_g$,
  $\Delta_{\wtilde f}$, $D_g$ and $D_{\wtilde f}$ playing the role of $\Gamma$,
  $\Delta=\Delta'$, $D$ and $D'$, and the fact that $\af(D_g-D_{\wtilde f}) =0$, we
  get a morphism of triangles
  
  \begin{equation}\label{eq:yek}
  \underline \Kosz^{at'_m-i}_{\wtilde f}  (\log \Delta_{\wtilde f})  \longrightarrow  \dR \sigma_*  \underline\Kosz^{at'_m-i}_g (\log \Delta_g).
  \end{equation}
  Since the associated morphisms are snc, the two Koszul triangles in \autoref{eq:yek} are isomorphic 
  to the two complexes (short exaxt sequences) of locally free sheaves $\Kosz^{at'_m-i}_{\wtilde f}  (\log \Delta_{\wtilde f})$,  
  $\Kosz^{at'_m-i}_g (\log \Delta_g)$, 
  with the morphism \autoref{eq:yek} naturally arising from 
  \begin{equation}\label{eq:DO}
  \sigma^* \Kosz_{\wtilde f}^{at'_m-i} (\log \Delta_{\wtilde f})  \longrightarrow  \Kosz^{at'_m-i}_g (\log \Delta_g). 
  \end{equation}
  On the other hand, by the construction of $\sigma$ we have $h^0( \sigma^* \mu^* \sM  ) \neq 0$, i.e. 
  $\sigma^*\mu^* \sM^{-1} \hookrightarrow \sO_Z$. 
  Combining this with \autoref{eq:DO} we find 
  $$
  \sigma^* \big(  \Kosz_{\wtilde f}^{at'_m-i} (\log \Delta_{\wtilde f})  \otimes \mu^*\sM^{-1}  \big)
       \longrightarrow   \Kosz^{at'_m-i}_g (\log \Delta_g).
  $$
  Applying $\dR \sigma_*$, the projection formula gives  
  $$
   \Kosz_{\wtilde f}^{at'_m-i} (\log \Delta_{\wtilde f})  \otimes \mu^*\sM^{-1}   \longrightarrow  
   \dR \sigma_* \Kosz^{at'_m-i}_g (\log \Delta_g)  ,
  $$
  as required. \qed
 
  Combining \autoref{eq:2ndTri} and \autoref{eq:STAR} leads to the morphism of
  triangles:
  \[
    \underline \Kosz_f^{at'_m-i} (\log \Delta') \otimes \sM^{-1} \longrightarrow \dR
    \eta_* \big( \underline \Kosz_g^{at'_m-i} (\log \Delta_g) \big) \otimes (at_m
    \, f^*D),
  \]
  where $\eta: = \mu \circ \sigma$.  Similarly to \autoref{eq:psi} it follows that
  there exists a morphism of systems of equal weight (equal to the relative dimension
  of $f$):
  \[
    \psi^{\sM} = \bigoplus \psi_i^{\sM} : (\sF', \tau') (-at_m D )
    \longrightarrow (\sE^0, \theta^0) ,
  \]
  where $(\sE^0, \theta^0)$ is the system underlying the Deligne extension for
  $\dR^{at'_m} g_* \bC_{Z\setminus \Delta_g}$.  On the other hand, since $X$ has
  rational singularities, by \autoref{thm:log-DB}\autoref{item:8} and construction we have 
  $\sA_m(-D') \hookrightarrow \sF'_0$. 
  Consequently we find 
  $$
  \sA_m(-D') (-at_m D) \hookrightarrow \sF'_0 (-at_m D)
  \xhookrightarrow{\psi^{\sM}_0} \sG_0 \subseteq \sE_0 ,
  $$
  where $(\sG=\bigoplus \sG_i, \theta)$ is the image of $(\sF', \tau')(-at_m D)$
  under $\psi^{\sM}$.  The rest of the argument follows as in the $m=1$ case
  (\autoref{thm:App}).
\end{proof}

\noindent
The following corollary now directly follows from \autoref{thm:TheEnd} and
\cite{CP16}*{7.11}.
 
\begin{corollary}
  [= \autoref{thm:Cons}]
  \label{cor:END}
  Using the notation and assumptions of \autoref{thm:TheEnd}, further assume that
  $(\det f_*\omega^m_{X/B})(- t_m D)$ is big. Then, the pair $(B, D_{\wtilde f}- D)$
  is of log-general type.
\end{corollary}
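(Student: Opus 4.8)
The plan is to derive \autoref{cor:END} directly from \autoref{thm:TheEnd} by choosing a suitable auxiliary exponent and then invoking the orbifold bigness criterion of Campana and P\u{a}un \cite{CP16}*{7.11}. Throughout, write $D' := D_{\wtilde f} - D$ and, for the fixed $m$ of the statement, set $\sL := (\det f_*\omega^m_{X/B})(-t_m D)$, which is big by hypothesis; recall $t_m = m r_m n$ with $n = \dim X$.

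First I would fix the exponent $a$ appearing in \autoref{thm:TheEnd}. Since $\sL$ is big, $c_1(\sL)$ lies in the (open) big cone of the projective variety $B$, and
\[
  \tfrac1a\, c_1\!\big((\det f_*\omega^m_{X/B})^a(-a t_m D - D')\big) = c_1(\sL) - \tfrac1a\,[D'] \longrightarrow c_1(\sL) \quad (a\to\infty),
\]
so for all $a \gg 0$ the line bundle $(\det f_*\omega^m_{X/B})^a(-a t_m D - D') \simeq \sL^{a}(-D')$ is again big. Fix one such $a$ and apply \autoref{thm:TheEnd} with this $m$ and this $a$.

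Next I would rule out the first alternative of \autoref{thm:TheEnd}. On a projective variety $B$ of dimension $d$, any big line bundle $\sM$ satisfies $c_1(\sM)\cdot H^{d-1} > 0$ for every ample divisor $H$: by Kodaira's lemma $\sM \equiv A + E$ with $A$ an ample $\bQ$-divisor and $E$ effective, hence $c_1(\sM)\cdot H^{d-1} \ge A\cdot H^{d-1} > 0$. Applying this with $\sM = \sL^{a}(-D')$ contradicts alternative (1), so the second alternative must hold: there are $t, N\in\bN$ and a pseudo-effective line bundle $\sB$ on $B$ with an injection $\big(\sL^{a}(-D')\big)^{t}\otimes\sB \hooklongrightarrow (\Omega^1_B(\log D'))^{\otimes N}$. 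Since $\sL^{a}(-D')$ is big, so is $\big(\sL^{a}(-D')\big)^{t}$, and tensoring by the pseudo-effective $\sB$ preserves bigness; thus $(\Omega^1_B(\log D'))^{\otimes N}$ admits a big rank-one subsheaf. As $(B, D')$ is a log smooth projective pair --- $B$ is smooth projective and $D'$, being a sub-divisor of the snc divisor $D_{\wtilde f}$, is reduced and snc --- \cite{CP16}*{7.11} yields that $K_B + D'$ is big, i.e.\ $(B, D') = (B, D_{\wtilde f} - D)$ is of log-general type, which is the assertion of \autoref{cor:END}.

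The only two steps requiring any care are (i) the elementary openness argument showing $\sL^{a}(-D')$ is big for $a\gg 0$, and (ii) verifying that the precise hypotheses of \cite{CP16}*{7.11} are met, in particular log smoothness of $(B,D')$ and that a big rank-one subsheaf of a tensor power of $\Omega^1_B(\log D')$ is exactly what triggers log-bigness of $K_B + D'$. Neither is a genuine obstacle; all the substance of the corollary is already contained in \autoref{thm:TheEnd}, and this proof merely packages it together with the Campana--P\u{a}un theorem.
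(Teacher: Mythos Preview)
Your proof is correct and follows essentially the same route as the paper: choose $a\gg 0$ so that $\sL^a(-D')$ is big, observe that this forces alternative~\autoref{2Inj} of \autoref{thm:TheEnd} to hold, and then apply \cite{CP16}*{7.11}. The paper's own argument is terser but identical in substance; your version simply spells out the openness of the big cone, the Kodaira-lemma reason why bigness excludes alternative~(1), and the log-smoothness of $(B,D')$.
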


\begin{proof}
Since $(\det f_*\omega^m_{X/B})(- t_m  D)$ is big, for any sufficiently large $a\in \bN$, 
the line bundle $(\det f_*\omega^m_{X/B})^a(-at_m D)(-D')$ is also big. 
Therefore, for a pseudo-effective line bundle $\beta$ and $t,N \in \bN$, 
there is an injection as in \autoref{thm:TheEnd} \autoref{2Inj}.  
The rest now follows from \cite{CP16}*{7.11}.
\end{proof}

\section{Appendix: Summary of wedge products and filtration
  diagrams}\label{sec:appendix}

\newcommand{\ww}[2]{\curlywedge _{#1}^{#2}}%
\newcommand{\wws}[2]{\scriptstyle \curlywedge _{#1}^{#2}}%

\newcommand\scat{\frS\frc\frh_{S}}%
\newcommand\slf{\frL\fro\frc\frF\frr\fre\fre_{S}}

\newcommand\nsmorphd{Let $\Phi_X$ and $\Psi_X$ be locally free sheaves on $X$ of rank
  $k$ and $n$ respectively, and let $\theta_X:\Phi_X\to\Psi_X$ be a morphism}

\newcommand\nsmorphdfunc{Let $\Phi$ and $\Psi$ be honest functors from $\scat$ to
  $\slf$, and $\theta:\Phi\to\Psi$ a natural transformation}

\newcommand\mywedge{\displaystyle\bigwedge}

\newcommand\Ox{\Psi_X}
\newcommand\Oxi[1]{\mywedge^{#1}\Psi_X}
\newcommand\Oxs{{\frQ}}
\newcommand\Oxsi[1]{{\frQ}^{#1}}
\newcommand\Oxsui[1]{\underline{\frQ}^{#1}_{\,\theta_X}} 
\newcommand\Oxsu{\underline{\frQ}}
\newcommand\fos{\Phi_X}
\newcommand\tfos{\otimes \Phi_X}
\newcommand\foms{\det\Phi_X}
\newcommand\tfoms{\otimes \det\Phi_X}
\newcommand\fosi[1]{\mywedge^{#1}\Phi_X}
\newcommand\tfosi[1]{\otimes \mywedge^{#1}\Phi_X}
\newcommand\fomsi[1]{(\det\Phi_X)^{#1}}
\newcommand\fomsj[1]{(\det\Phi_X)^{#1}}
\newcommand\tfomsi[1]{\otimes (\det\Phi_X)^{\otimes #1}}
\newcommand\tfomsj[1]{\otimes (\det\Phi_X)^{#1}}
\newcommand\Osi[1]{\mywedge^{#1}\Phi_X}
\newcommand\Oy{\Psi_Y}
\newcommand\Oyu{\underline\Psi_Y}
\newcommand\Oysu{\underline{\frQ}_{\,\nattr_Y}} 
\newcommand\Oysui[1]{\underline{\frQ}^{#1}_{\,\nattr_Y}} 
\newcommand\Oyi[1]{\mywedge^{#1}\Psi_Y}
\newcommand\Oyui[1]{\mywedge^{#1}\Psi_Y}
\newcommand\ff[2]{\frF_{#1}^{#2}}
\newcommand\ffb[2]{\boxed{\frF_{#1}^{#2}}}
\newcommand\ffbi[3]{\boxed{\frF_{#1}^{#2}}^{\ #3}}
\newcommand\ffbx[3]{\boxed{\frF_{#1}^{#2}(#3)}}
\newcommand\ffs[2]{\scriptstyle\frF_{#1}^{#2}}
\newcommand\ffsb[2]{\boxed{\ffs{#1}{#2}}}
\newcommand\ffsbx[3]{\boxed{\scriptstyle\frF_{#1}^{#2}(#3)}}
\newcommand\fa[2]{\frf_{#1}^{#2}}
\newcommand\fab[2]{\boxed{\frf_{#1}^{#2}}}
\newcommand\ffa[2]{\frF\fri\frl\frt_{#1}^{#2}}
\newcommand\ffab[2]{\boxed{\frF\fri\frl\frt_{#1}^{#2}}}
\newcommand\hypf[1]{\mathbb F^{#1}\hskip-2pt}
\newcommand\hypg[1]{\mathbb G^{#1}\hskip-2pt}
\newcommand\hype[1]{\mathbb E^{#1}\hskip-2pt}
\newcommand\hypgre[1]{\mathbb Gr^{#1}_{\mathbb E}\hskip-2pt}
\newcommand\hypgr[1]{\mathbb Gr^{#1}_{\mathbb F}\hskip-2pt}
\newcommand\hypgrr[2]{\mathbb Gr^{#1}_{#2}\hskip-2pt}

\newcommand\dist[3]{\xymatrix{#1\ar[r] & #2 \ar[r] & #3\ar[r]^-{+1} & }}
\newcommand\shortses[3]{0 \rightarrow #1 \rightarrow #2 \rightarrow #3 \rightarrow 0}
\newcommand\shortdist[3]{\dist{#1}{#2}{#3}}

\noin
We recall some definitions and constructions from \cite{Kovacs05a} for the reader's
convenience.

\subsection{Wedge products}
\nsmorphd.

\begin{definition}\label{wwdef}
  Let $\eta$ be a section of $\Oxi p$ over an open set and $\xi_1,\dots,\xi_k$
  sections of $\fos$ over the same set. Then $\eta\otimes(\xi_1\land\dots\land\xi_k)$
  is a section of $\Oxi p\otimes\foms$. For any $\sigma\in S_k$ let
  \[
    \xi_{\sigma,
      q}=\theta_X(\xi_{\sigma(1)})\land\dots\land\theta_X(\xi_{\sigma(q)}),
  \]
  and
  \[
  \xi^{\sigma, q}=\xi_{\sigma(q+1)}\land\dots\land\xi_{\sigma(k)}.
  \]

  Further let
  \[
    S_{k, q}= \{\sigma\in S_k | \sigma(1)<\dots<\sigma(q)\text{ and
    }\sigma(q+1)<\dots<\sigma(k)\},
  \] and
  \[
  I_{\sigma}=\{\sigma(1),\dots,\sigma(q)\}.
  \] 

  It is easy to see that every $\sigma\in S_{k, q}$ is determined by $I_\sigma$. Now
  define
  \[
    \ww q\theta(\eta\otimes(\xi_1\land\dots\land\xi_k))\in\Oxi{p+q}\otimes\fosi {k-q}
  \]
  by the formula
  \[
    \ww q\theta(\eta\otimes(\xi_1\land\dots\land\xi_k))= \sum_{\sigma\in S_{k, q}}
    (-1)^{\sgn\sigma}(\xi_{\sigma, q}\land\eta)\otimes\xi^{\sigma, q},
  \]
  and extend it linearly.
\end{definition}

To see that
\[
  \ww q\theta: {\Oxi p\otimes\foms}\to{\Oxi {p+q}\otimes\fosi {k-q}}
\]
is a well-defined morphism of sheaves, it is enough to verify the multi-linear and
alternating properties. This is left to the reader.

\begin{lemma}\label{wedgecomm}
  Let $\id$ denote $\id_{\Phi_X}:\Phi_X\to\Phi_X$. Then
  \[
    \xymatrix{%
      \ar[d]_{\ww {q+r}\theta} \Oxi p\otimes\foms\otimes\foms \ar[r]^{\ww q\theta} &
      \Oxi {p+q}\otimes \foms\otimes \fosi {k-q}
      \ar[d]^{\ww r\theta}\\
      \Oxi {p+q+r}\otimes \fosi {k-q-r}\otimes \foms \ar[r]^{\ww q\id} &
      \Oxi {p+q+r}\otimes \fosi {k-r}\otimes \fosi {k-q} \\
    }
  \]
  is a commutative diagram, i.e.,
  $ \ww r\theta\circ\ww q\theta=\ww q\id\circ\ww {q+r}\theta$.
\end{lemma}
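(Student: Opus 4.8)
The plan is to verify commutativity of the square in \autoref{wedgecomm} directly from the explicit formula in \autoref{wwdef}, after unwinding all the signs. Both composites send a generator $\eta\otimes(\xi_1\wedge\dots\wedge\xi_k)\otimes(\zeta_1\wedge\dots\wedge\zeta_k)$ (where the two $\det\Phi_X$-factors are labeled by the $\xi$'s and $\zeta$'s respectively) to an element of $\Oxi{p+q+r}\otimes\fosi{k-r}\otimes\fosi{k-q}$, and since $\ww\bullet\theta$ and $\ww\bullet\id$ are already known to be well-defined sheaf maps, it suffices to check equality on such generators over an open set.

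First I would expand the top-then-right composite. Applying $\ww q\theta$ to the first $\det\Phi_X$-factor produces $\sum_{\sigma\in S_{k,q}}(-1)^{\sgn\sigma}(\xi_{\sigma,q}\wedge\eta)\otimes\xi^{\sigma,q}\otimes(\zeta_1\wedge\dots\wedge\zeta_k)$, where $\xi^{\sigma,q}$ lives in $\fosi{k-q}$ and the untouched $\zeta$-factor still lives in $\det\Phi_X=\fosi{k}$. Then $\ww r\theta$ acts on the $\zeta$-factor (the one in slot $\foms$), giving $\sum_{\tau\in S_{k,r}}(-1)^{\sgn\tau}(\zeta_{\tau,r}\wedge\xi_{\sigma,q}\wedge\eta)\otimes\xi^{\sigma,q}\otimes\zeta^{\tau,r}$. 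Then I would expand the left-then-bottom composite: $\ww{q+r}\theta$ applied to the doubled $\det\Phi_X$-factor — which I interpret via the natural identification $\foms\otimes\foms$ with a single $\fosi{2k}$ built from the concatenated list $(\xi_1,\dots,\xi_k,\zeta_1,\dots,\zeta_k)$, or more honestly by first multiplying out $\ww{q+r}\theta$ on one factor and keeping track — then applying $\ww q\id$ afterwards. The key bookkeeping is that $S_{k,q}$-shuffles applied after $S_{k,q+r}$-shuffles reassemble, with a controlled sign, into the nested double shuffle sum indexed by choosing first a $q$-subset and then an $r$-subset from the complement, which is exactly what the right-then-top side produces. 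Keeping a fixed reference ordering on each $\fosi j$ factor is essential so that the two sign tallies can be matched term by term.

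I would organize the sign comparison combinatorially: on each side, the total sign attached to a term indexed by an ordered pair (chosen $q$-subset $A$, chosen $r$-subset $B$ disjoint from $A$) is a product of a shuffle sign coming from pulling the $\theta$-images past $\eta$ and each other, and a rearrangement sign coming from sorting the surviving $\Phi_X$-wedge factors into the reference order. The claim reduces to the identity that these two products agree, which is a purely symmetric-group computation of the standard type: the associativity of iterated shuffle products, together with the fact that $\wedge$ on $\Phi_X$ is graded-commutative. I would cite the analogous computation in \cite{Kovacs05a} where this wedge calculus was set up, noting that the present lemma is the ``associativity up to a flip of the two $\Phi_X$-factors'' statement and that its proof is the routine verification indicated there.

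The main obstacle is purely the sign bookkeeping — specifically, making sure the implicit identification of $\foms\otimes\foms$ used in the source $\ww{q+r}\theta$ is the one that makes the diagram commute on the nose, and that the final target's factor ordering $\fosi{k-r}\otimes\fosi{k-q}$ (note: $k-r$ first, then $k-q$) is matched by the order in which $\theta$ is applied on each side. I expect no conceptual difficulty; the lemma follows once a consistent convention for ordering wedge factors is fixed and the two shuffle-sum sign tallies are seen to coincide, so in the write-up I would fix such a convention, expand both composites on a generator, and leave the elementary symmetric-group sign identity to the reader as in \autoref{wwdef}.
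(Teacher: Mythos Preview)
Your overall plan---expand both composites on a generator and match shuffle-indexed terms---is exactly what the paper does, but two concrete missteps would prevent your write-up from going through as stated.

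First, the remark that $\foms\otimes\foms$ is ``a single $\fosi{2k}$ built from the concatenated list'' is simply false: $\fosi{2k}=\bigwedge^{2k}\Phi_X=0$ since $\Phi_X$ has rank $k$. The map $\ww{q+r}\theta$ in the left column acts on $\Oxi p$ and \emph{one} copy of $\foms$; the other $\foms$ is carried along untouched, and only at the bottom does $\ww q\id$ act on the surviving $\fosi{k-q-r}$-factor together with that second $\foms$. Your hedge (``or more honestly\dots'') is the correct reading; commit to it and drop the $\fosi{2k}$ idea entirely.

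Second, and more importantly, labeling the two $\foms$-factors with \emph{different} sections $\xi_i$ and $\zeta_i$ throws away the mechanism that makes the computation tractable. Since $\foms$ is a line bundle, it suffices to check on a single local generator, so you may and should take $\zeta_i=\xi_i$. The paper does this, and then the expansion of $\ww r\theta\circ\ww q\theta$ has the feature that any term with $I_\tau\cap I_\sigma\neq\emptyset$ vanishes because $\xi_{\tau,r}\wedge\xi_{\sigma,q}$ contains a repeated wedge factor; similarly, on the other side, $\xi_{\nu,q}\wedge\xi^{\mu,q+r}\neq 0$ forces $I_\nu\subseteq I_\mu$. So both sums are indexed by pairs (a $q$-subset $I_\sigma$, a disjoint $r$-subset $I_\tau$), and the paper writes down the explicit bijection $(\sigma,\tau)\leftrightarrow(\mu,\nu)$ with $I_\mu=I_\sigma\cup I_\tau$, $\nu=\sigma$, and verifies one sign identity. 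With different $\xi$'s and $\zeta$'s there is no such vanishing, the two sums have different shapes, and the ``routine shuffle associativity'' you invoke does not directly apply; you would have to reduce to $\zeta=\xi$ anyway. Finally, citing \cite{Kovacs05a} for the computation is circular here: this appendix \emph{is} reproducing that computation.
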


\begin{proof}
  Use the same notation as in \autoref{wwdef}. Then
  \begin{multline*}
    \ww r\theta\circ\ww q\theta(\eta\otimes(\xi_1\land\dots\land\xi_k)
    \otimes(\xi_1\land\dots\land\xi_k))=\\
    =\sum_{\tau\in S_{k, r}}\sum_{\sigma\in S_{k, q}}
    (-1)^{\sgn\tau+\sgn\sigma}(\xi_{\tau,r}\land\xi_{\sigma, q} \land \eta)\otimes
    \xi^{\sigma,q}\otimes\xi^{\tau,r}.
  \end{multline*}

  Let $\sigma\in S_{k,q}$, $\tau\in S_{k, r}$. If
  \[
    I_\tau\cap I_\sigma=\{\tau(1),\dots,\tau(r)\}\cap
    \{\sigma(1),\dots,\sigma(q)\}\neq\emptyset,
  \]
  then $\xi_{\tau,r}\land \xi_{\sigma, q}=0$. Otherwise let
  $\mu=\mu(\sigma, \tau)\in S_{k, q+r}$ be defined by $I_\mu=I_\tau\cup I_\sigma$ and
  let $\nu=\nu(\sigma, \tau)=\sigma\in S_{k,q}$.
  \begin{multline*}
    \ww q\id\circ\ww {q+r}\theta(\eta\otimes(\xi_1\land\dots\land\xi_k)
    \otimes(\xi_1\land\dots\land\xi_k))=\\
    \sum_{\nu\in S_{k, q}}\sum_{\mu\in S_{k, q+r}}(-1)^{\sgn\mu+\sgn\nu}(\xi_{\mu,
      q+r}\land\eta)\otimes\xi^{\nu,q}\otimes (\xi_{\nu,q}\land\xi^{\mu,q+r})
  \end{multline*}
  and for $\nu\in S_{k, q},\ \mu\in S_{k, q+r},\ \xi_{\nu,q}\land\xi^{\mu,q+r}\neq 0$
  let $\sigma=\sigma(\mu, \nu)=\nu$ and $\tau=\tau(\mu, \nu)\in S_{k,r}$ be defined
  by $I_\tau=I_\mu\setminus I_\nu$.

  This gives a one-to-one correspondence between the pairs $(\sigma, \tau)$ and the
  pairs $(\mu, \nu)$. Further observe that
  \[
    (-1)^{\sgn\tau}(\underbrace{\xi_{\tau,r}\land \xi_{\sigma,q}}_{\pm\xi_{\mu,
        q+r}})\otimes\xi^{\tau, r} = (-1)^{\sgn\mu}\xi_{\mu, q+r} \otimes
    (\underbrace{\xi_{\nu,q}\land\xi^{\mu,q+r}}_{\pm\xi^{\tau, r}}),
  \]
  so
  \begin{multline*}
    (-1)^{\sgn\tau+\sgn\sigma}(\xi_{\tau,r}\land\xi_{\sigma, q} \land
    \eta)\otimes \xi^{\sigma,q}\otimes\xi^{\tau,r} =\\
    (-1)^{\sgn\mu+\sgn\nu}(\xi_{\mu, q+r}\land\eta)\otimes\xi^{\nu,q}\otimes
    (\xi_{\nu,q}\land\xi^{\mu,q+r}).\qedhere
  \end{multline*}
\end{proof}

\subsection{Filtration diagrams}

Let $X$ be a scheme. As usual, $C(X)$ will denote the category of complexes of
$\sO_{X}$-modules and for $u\in\mor(C(X))$, $M(u)\in\ob(C(X))$ will denote the
mapping cone of $u$. $K(X)$ is the category of homotopy equivalence classes of
objects of $C(X)$. A diagram in $C(X)$ will be called a \emph{preditinguished
  triangle} if its image in $K(X)$ is a distinguished triangle.  $D(X)$ will denote
the derived category of complexes of $\sO_{X}$-modules. The superscripts $+, -, b$
carry the usual meaning (bounded below, bounded above and bounded). Regarding these
notions the basic reference will be \cite{Hartshorne66}.  $S_k$ denotes the symmetric
group of degree $k$.

\nsmorphd.

Let $p, i\in\bN$. We are going to define an object,
$\ff ip=\ff ip(\theta_X)\in\ob(C(X))$ and a \emph{$(p,i)$-filtration diagram of
  $\theta_X$} diagram, $\ffb ip=\boxed{\ff ip(\theta_X)}$.  This will be done
recursively, starting with $i=0$ and then increasing $i$.

\begin{definition}
  The \emph{$(p,0)$-filtration diagram of $\theta_X$} is
  \[
    \ffb 0p=\ff 0p= \Oxi {n-p}.
  \]
  A \emph{$0$-filtration morphism} for some $p, q$, consists of locally free sheaves
  $\sE, \sF$ and a morphism between $\Oxi {n-p}\otimes\sE$ and $\Oxi {n-q}\otimes\sF$.
\end{definition}
\noin
For instance,
\[
  \ww p\theta:\Oxi {n-p}.\tfoms \longrightarrow \Oxi n\tfosi {k-p}
\]
is a $0$-filtration morphism. Let
\[
  \ff 1p=M(\ww p\theta)[-1].
\]

\begin{definition}
  The \emph{$(p,1)$-filtration diagram of $\theta_X$} consists of the
  predistinguished triangle,
  \[
    \dist {\ff 1p}{\Oxi {n-p}\tfoms}{\Oxi n\tfosi {k-p}}
  \] 
  It is denoted by $\ffb 1p$. A \emph{$1$-filtration morphism} for some $p, r$,
  consists of locally free sheaves $\sE, \sF$ and morphisms between the corresponding
  terms of $\ffb 1p\otimes\sE$ and $\ffb 1r\otimes\sF$ such that the resulting
  diagram is commutative:
  \[
    \xymatrix{%
      {\ff 1p \otimes\sE} \ar[d]\ar[r] & {\ff 1r \otimes\sF} \ar[d]
      \\
      {\Oxi {n-p}\tfoms\otimes\sE} \ar[r]\ar[d]_{\ww p\theta} & {\Oxi
        {n-r}\tfoms\otimes\sF} \ar[d]^{\ww r\theta}
      \\
      {\Oxi n\tfosi {k-p}\otimes\sE} \ar[r] \ar[d]_{+1} & {\Oxi n\tfosi
        {k-r}\otimes\sF} \ar[d]^{+1} & \\ & .}
  \]
\end{definition}

\noin Consider the following commutative diagram (cf.~\autoref{wedgecomm}).
\begin{equation}
  \label{preegy}
  \begin{gathered}
    \xymatrix{%
      {\ff 1p\tfoms} \ar[d] & {\ff 1{p-q} \tfosi {k-q}} \ar[d]
      \\
      {\Oxi {n-p}\tfoms\tfoms} \ar[d]_{\ww p\theta} \ar[r]^-{\ww q\theta} & {\Oxi
        {n-p+q}\tfoms\tfosi {k-q}} \ar[d]^{\ww {p-q}\theta}
      \\
      {\Oxi n\tfosi {k-p}\tfoms} \ar[r]^-{\ww q.\id.}  \ar[d]_{+1} & {\Oxi n\tfosi
        {k-p+q}\tfosi {k-q}} \ar[d]^{+1} \\ & }
  \end{gathered}
\end{equation}

\noin There exists a morphism,
\[
  \alpha: \ff 1p\tfoms\to \ff 1{p-q}  \tfosi {k-q},
\]

\noin 
that makes the above diagram commutative. 

\noindent
The diagram \autoref{preegy}, combined with $\alpha$ gives a $1$-filtration morphism
\[
  \ffb 1p\tfoms \longrightarrow \ffb 1{p-q}\tfosi {k-q},
\]
with $r=p-q, \sE=\foms, \sF=\fosi {k-q}$.

Let
\[
  \ff 2p=M\bigg( {\ff 1p\tfoms}\to{\ff 11\tfosi {k-p+1}}\bigg)[-1].
\]
Then there exists a distinguished triangle,
\[
  \dist{\ff 2p}{\ff 1p\tfoms}{\ff 11\tfosi {k-p+1}}
\]

\begin{definition}
  The \emph{$(p,2)$-filtration diagram of $\theta_X$} consists of the diagram,
  \[
    \ff 2p \longrightarrow \ffb 1p\tfoms \longrightarrow \ffb 11\tfosi {k-p+1}.
  \]
  It is denoted by $\ffb 2p$. A \emph{$2$-filtration morphism} for some $p, r$,
  consists of locally free sheaves $\sE, \sF$ and morphisms between the corresponding
  terms of $\ffb 2p\otimes\sE$ and $\ffb 2r\otimes\sF$ such that the resulting
  diagram is commutative.
\end{definition}

\noin
More explicitly, the $(p,2)$-filtration diagram of $\theta_X$ is:
\[
  \xymatrix@C3em{%
    \ff 2p \ar[r] & \ar[d] {\ff 1p \tfoms} \ar[r] & \ff 11 \tfosi {k-p+1} \ar[d] \\
    &{\Oxi {n-p}\tfoms\tfoms} \ar[d]_{\ww p\theta}\ar[r]^-{\ww {p-1}\theta} & \Oxi
    {n-1}\tfoms\tfosi    {k-p+1} \ar[d]^{\ww 1\theta}\\
    & \Oxi n\tfosi {k-p}\tfoms \ar[r]^{\ww {p-1}\id} \ar[d]_{+1} & \Oxi
    n\tfosi {k-1}\tfosi {k-p+1} \ar[d]^{+1}\\
    && }
\]

\noin Similarly, a $2$-filtration morphism is:
\[
  \xymatrix@C1em{%
    \ffbi 2p{0,0}\!\otimes\sE\ar@{->}[rr] \ar[rd] && \ffbi 2p{0,1}
    \!\otimes\sE\ar@{->} [rr]\ar@{->}'[d] [dd] \ar[rd] && \ffbi 2p{0,2}
    \!\otimes\sE\ar@{->}'[d] [dd]\ar[rd]
    \\
        &\ffbi 2r{0,0}\!\otimes\sF\ar@{->}[rr] && \ffbi 2r{0,1} \!\otimes\sF\ar@{->}
    [rr]\ar@{->} [dd]
    && \ffbi 2r{0,2} \!\otimes\sF\ar@{->} [dd]\\
    && \ffbi 2p{1,1}\!\otimes\sE \ar[rd] \ar@{->}'[r][rr] \ar@{->} [dd] && \ffbi
    2p{1,2} \!\otimes\sE\ar[rd] \ar@{->}'[d] [dd]
    \\
    &&& \ffbi 2r{1,1}\!\otimes\sF\ar@{->}[rr] \ar@{->} [dd] && \ffbi 2r{1,2}
    \!\otimes\sF\ar@{->} [dd]
    \\
    && \ffbi 2p{2,1}\!\otimes\sE\ar[rd] \ar@{->}'[r][rr]
    && \ffbi 2p{2,2}\!\otimes\sE\ar[rd]
    \\
    &&& \ffbi 2r{2,1}\!\otimes\sF\ar@{->}[rr] && \ffbi 2r{2,2}\!\otimes\sF,
  }
\]

\noin
where the $(p,2)$-filtration diagram,
\[
  \xymatrix@R3em{%
    \ff 2p \ar@{->}[r] & {\ff 1p\tfoms} \ar@{->}[r]\ar@{->}[d]
    & \ff 11\tfosi {k-p+1}\ar@{->}[d]\\
    & \Oxi {n-p}\tfoms\tfoms \ar@{->}[r] \ar@{->}[d] & \Oxi {n-1}\tfoms\tfosi {k-p+1}
    \ar@{->}[d]
    \\
    & \Oxi n\tfosi {k-p}\tfoms \ar[d]^-{+1}\ar@{->}[r]
    & \Oxi n\tfosi {k-1}\tfosi {k-p+1}\ar[d]^-{+1}\\
    && }
\]

\noin
is represented by the simplified diagram,
\[
  \xymatrix@C5em{%
    \ffb 2p^{\ 0,0}\ar@{->}[r] & \ffb 2p^{\ 0,1} \ar@{->} [r]\ar@{->} @<-1.5ex> [d]
    & \ffb 2p^{\ 0,2}  \ar@{->} @<-1.5ex> [d]\\
    & \ffb 2p^{\ 1,1}\ar@{->}[r] \ar@{->} @<-1.5ex> [d] & \ffb 2p^{\ 1,2} \ar@{->}
    @<-1.5ex> [d]
    \\
    & \ffb 2p^{\ 2,1}\ar@{->}[r] & \ffb 2p^{\ 2,2}.  }
\]

To define the $(p,i)$-filtration diagram of $\theta_X$ and the
$i$-filtration morphisms we will iterate this construction.

\begin{num}{\bf Inductive Hypotheses.} \rm 
  \label{indstep}%
  For a given $i$ the following hold for each $p, q, r\in\bN$.
  \begin{enumerate}
  \item\label{indstepegy} The \emph{$(p,i)$-filtration diagram of $\theta_X$} is
    defined and denoted by $\ffb ip$.
  \item\label{indstepket} An \emph{$i$-filtration morphism}, by definition, consists
    of locally free sheaves $\sE, \sF$ and a morphism between the corresponding terms
    of $\ffb ip\otimes\sE$ and $\ffb ir\otimes\sF$ such that the resulting diagram is
    commutative.
  \item\label{indstephar} $\ffb ip$ has a unique object, $\ff ip$, with only one
    adjacent arrow pointing out.
  \item\label{indstepketfel} $\ff ip = 0$ for $p<i$.  
  \item\label{indstepnegy} There exists an $i$-filtration morphism,
    \[
      \ww q{\theta,i}:\ffb ip\tfoms \longrightarrow \ffb i{p-q}\tfosi {k-q}.
    \]
  \item\label{indstepot} The diagram,
    \[
      \xymatrix{%
        \ar[d]_{\ww {q+r}{\theta,i}}\ffb ip\otimes\foms\otimes\foms \ar[r]^{\ww
          q{\theta,i}} & \ffb i{p-q}\otimes \foms \otimes \fosi {k-q} \ar[d]^{\ww
          r{\theta,i}}        \\
        \ffb i{p-q-r}\otimes \fosi {k-q-r}\otimes \foms \ar[r]^{\ww q\id} & \ffb
        i{p-q-r}\otimes \fosi {k-r}\otimes \fosi {k-q} }
    \]
    is commutative.
  \end{enumerate}
\end{num}

\begin{lemma}\label{filtrationlemma}
  If \eqref{indstep} holds for $i=0,\dots, j$, then $\ffb {j+1}p$ can be defined so
  that \eqref{indstep} holds for $i=j+1$.
\end{lemma}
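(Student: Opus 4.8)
The plan is to carry out the inductive step from the case $i=j$ to the case $i=j+1$ in exact parallel with the passage from the $(p,1)$- to the $(p,2)$-filtration diagram displayed above. By \eqref{indstep}\ref{indstephar} for $i=j$, the diagram $\ffb jp$ has a unique object $\ff jp$ from which a single arrow emanates, and by \eqref{indstep}\ref{indstepnegy} applied with $q=p-j$ there is a $j$-filtration morphism $\ww{p-j}{\theta,j}\colon\ffb jp\tfoms\to\ffb jj\tfosi{k-p+j}$. Its restriction to the distinguished terminal objects is a genuine morphism of complexes $u_p\colon\ff jp\tfoms\to\ff jj\tfosi{k-p+j}$, and I would \emph{define}
\[
  \ff{j+1}p\leteq M(u_p)[-1],
\]
letting the $(p,j+1)$-filtration diagram $\ffb{j+1}p$ consist of $\ff{j+1}p$ together with the two sub-diagrams $\ffb jp\tfoms$ and $\ffb jj\tfosi{k-p+j}$, glued along $\ww{p-j}{\theta,j}$ and along the mapping-cone map out of $\ff{j+1}p$. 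This gives \eqref{indstep}\ref{indstepegy}, and \eqref{indstep}\ref{indstepket} (the notion of a $(j+1)$-filtration morphism) is then the obvious diagram of morphisms between two such data. Item \eqref{indstep}\ref{indstephar} is immediate since $\ff{j+1}p$ is the unique newly adjoined object. For \eqref{indstep}\ref{indstepketfel}: if $p<j$ then $k-p+j>k$, so $\fosi{k-p+j}=0$ and hence $\ff{j+1}p=0$; and if $p=j$ then $u_j$ is the identity under the canonical identification $\fosi{k}=\foms$, so its cone is acyclic.

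Next I would produce the filtration morphisms required by \eqref{indstep}\ref{indstepnegy}, i.e. for all $p,q$ a $(j+1)$-filtration morphism $\ww q{\theta,j+1}\colon\ffb{j+1}p\tfoms\to\ffb{j+1}{p-q}\tfosi{k-q}$. On the sub-diagram $\ffb jp\tfoms\tfoms$ I would apply $\ww q{\theta,j}\otimes\id_{\foms}$, landing in $\ffb j{p-q}\tfosi{k-q}\tfoms\subseteq\ffb{j+1}{p-q}\tfosi{k-q}$ (after the evident reordering of tensor factors); on the sub-diagram $\ffb jj\tfosi{k-p+j}\tfoms$ I would apply $\id_{\ffb jj}\otimes\ww q\id$, with $\ww q\id\colon\fosi{k-p+j}\tfoms\to\fosi{k-p+q+j}\tfosi{k-q}$, landing in $\ffb jj\tfosi{k-(p-q)+j}\tfosi{k-q}\subseteq\ffb{j+1}{p-q}\tfosi{k-q}$ (this requires $q\le p-j$; for larger $q$ the target term $\ff j{p-q}$ vanishes and the morphism is taken to be zero). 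The square relating these two choices along their common edge is exactly \eqref{indstep}\ref{indstepot} for $i=j$, with the auxiliary parameter $r$ there set equal to $p-j-q$. Since everything lives in $C(X)$, where the mapping-cone functor $M(-)$ is functorial for genuinely commuting squares of complexes — not merely up to homotopy — this square induces the needed map $\ff{j+1}p\tfoms\to\ff{j+1}{p-q}\tfosi{k-q}$ on cones, just as the morphism $\alpha$ did in the passage to $\ffb 2p$. The commutativity \eqref{indstep}\ref{indstepot} for $i=j+1$, i.e. $\ww r{\theta,j+1}\circ\ww q{\theta,j+1}=\ww q\id\circ\ww{q+r}{\theta,j+1}$, is then checked the same way: on the $\ffb j{\bullet}$-components it is \eqref{indstep}\ref{indstepot} for $i=j$, on the $\fosi{\bullet}$-tensor factors it is \lemmaref{wedgecomm} specialized to $\theta_X=\id$, and on the newly adjoined cone objects it follows by functoriality of $M(-)[-1]$ from the previous two. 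This verifies all of \eqref{indstep} for $i=j+1$.

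The one genuinely delicate point — and the main obstacle — is the bookkeeping of tensor factors: one must track precisely which copy of $\foms$ or $\fosi{\bullet}$ each instance of $\ww q{\theta,j}$, $\ww q\id$ and $\ww r\id$ acts on, and arrange these choices so that \emph{every} square appearing in the construction of $\ww q{\theta,j+1}$ and in the verification of \eqref{indstep}\ref{indstepot} commutes \emph{strictly} at the level of complexes. Once that is pinned down there is nothing further to prove beyond \lemmaref{wedgecomm} and the inductive instance of \eqref{indstep}\ref{indstepot}, because $M(-)$ on $C(X)$ carries strictly commuting squares to strictly commuting squares. Finally, iterating \lemmaref{filtrationlemma} starting from the cases $i=0,1,2$ treated explicitly above produces the $(p,i)$-filtration diagrams and all $i$-filtration morphisms for every $i\in\bN$.
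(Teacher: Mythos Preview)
Your proposal is correct and follows exactly the inductive construction of the cited \cite{Kovacs05a}*{2.5}, which is the natural continuation of the explicit cases $i=0,1,2$ displayed in the appendix: define $\ff{j+1}p$ as the shifted cone of the terminal-object component of $\ww{p-j}{\theta,j}$, build $\ww q{\theta,j+1}$ from $\ww q{\theta,j}$ and $\ww q\id$ on the two sub-diagrams, and use strict functoriality of $M(-)[-1]$ on commuting squares in $C(X)$ to produce the map on the new cone object and to propagate \ref{indstepot}.

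Two small points are worth tightening. First, in verifying \ref{indstepketfel} for $p<j$: the vanishing of $\fosi{k-p+j}$ alone only yields $\ff{j+1}p\simeq\ff jp\tfoms$; you must also invoke the inductive instance $\ff jp=0$ to conclude. Second, for $p=j$: your claim that $u_j$ is the identity is precisely the assertion $\ww 0{\theta,j}=\id$, which is not among the listed hypotheses in \autoref{indstep}. It does hold---$\ww 0\theta=\id$ directly from \autoref{wwdef}, and your own recipe for $\ww q{\theta,j+1}$ (built from $\ww 0{\theta,j}$ and $\ww 0\id$ when $q=0$) shows it is preserved under $j\mapsto j+1$---but you should carry this identity along explicitly as part of the inductive package.
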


\begin{proof}
  For the proof the reader is referred to \cite{Kovacs05a}*{2.5}.
\end{proof}

\noin Now we are ready to define $\Oxsui p\in\ob(D(X))$.

\begin{definition}\label{def:Q}
  Let $p\in\bZ$. For $p>n-k$ let $\Oxsui p=0$, and for $-k\leq p\leq n-k$
  let $\Oxsui p$ be the class of
  \[
    \ff {n-k-p}{n-k-p}\otimes(\foms)^{-(n-k-p+1)}
  \]
  in $\ob(D(X))$. It follows that
  \[
    \Oxsui {n-k}=\det\Psi_X\otimes(\foms)^{-1}
  \]
  and that there is a distinguished triangle:
  \[ 
    \dist {\Oxsui {n-k-1}\otimes\foms}{\Oxi {n-1}}{\Oxsui {n-k}\otimes\fosi {k-1}}.
  \]
  Furthermore, for $j\geq p-n+k$ let $\hypf j\Oxi p$ be the class of
  \[
    \ff {n-k-p+j}{n-p}\otimes(\foms)^{-(n-k-p+j)}
  \]
  in $\ob(D(X))$.  The predistinguished triangle,
\[
  \dist {\ff {n-k-p+j+1}{n-p}}{\ff {n-k-p+j}{n-p}\tfoms}{\ff {n-k-p+j}{n-k-p+j}\tfosi
    j}
\]
obtained during the definition of these filtration diagrams
(cf.\cite{Kovacs05a}*{(2.5.7)}) gives the distinguished triangle,
\[ 
  \dist {\hypf{j+1}\Oxi p}{\hypf j\Oxi p}{\Oxsui {p-j}\tfosi j}.
\]
Observe that $\hypf{k+1}\Oxi p=0$ by \autoref{indstepketfel} and
$\hypf{p-n+k}\Oxi p=\Oxi p$ by definition.  Furthermore, if $p-n+k<0$, then
\[ 
  \hypf{0}\Oxi p\simeq\hypf{-1}\Oxi p\simeq\dots\simeq \hypf{p-n+k}\Oxi p=\Oxi p,
\] 
because $\fosi j=0$ for $j<0$. If $p-n+k\geq 0$, define $\hypf{j}\Oxi p=\Oxi p$ for
$j=0,\dots, p-n+k$.
\end{definition}

\noin
The following theorem summarizes the above observations.

\begin{theorem}\cite{Kovacs05a}*{2.7}\label{spseqegy}
  \nsmorphd.  Then there exists an object $\Oxsui r\in\ob(D(X))$ for each
  $r\in\bZ, r\geq -k$ with the following property. For each $p\in\bN$ there exists a
  hyperfiltration $\hypf j\Oxi p$ of $\Oxi p$ with $j=0,\dots, k+1$, such that
  \[
    \hypf{0}\Oxi p\simeq\Oxi p,
  \]
  \[
    \hypf{k+1}\Oxi p\simeq 0\] and 
  \[
    \bG r^j\Oxi p\simeq\Oxsui {p-j}\tfosi j.
  \]
  Furthermore, for $r> n-k$,
  \[
    \Oxsui r\simeq 0.
  \]
\end{theorem}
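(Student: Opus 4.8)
The plan is to assemble the theorem directly from the recursive machinery of filtration diagrams developed above; granted \autoref{filtrationlemma}, everything else is bookkeeping. First I would fix the morphism $\theta_X:\Phi_X\to\Psi_X$ and construct the objects $\ff ip=\ff ip(\theta_X)$ together with the $(p,i)$-filtration diagrams $\ffb ip$ by induction on $i$, starting from $\ffb 0p=\Oxi{n-p}$ and applying \autoref{filtrationlemma} (that is, \cite{Kovacs05a}*{2.5}) at each step. This simultaneously secures the Inductive Hypotheses \eqref{indstep} for every $i$: in particular the vanishing $\ff ip=0$ for $p<i$, the existence of the $i$-filtration morphisms $\ww q{\theta,i}:\ffb ip\tfoms\to\ffb i{p-q}\tfosi{k-q}$, and their mutual compatibility, which ultimately rests on \autoref{wedgecomm}.

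Next I would \emph{define} the objects $\Oxsui r$ and the hyperfiltration terms $\hypf j\Oxi p$ verbatim as in \autoref{def:Q}: set $\Oxsui r=0$ for $r>n-k$, and for $-k\le r\le n-k$ let $\Oxsui r$ be the class in $D^b(X)$ of $\ff{n-k-r}{n-k-r}\otimes(\foms)^{-(n-k-r+1)}$; likewise let $\hypf j\Oxi p$ be the class of $\ff{n-k-p+j}{n-p}\otimes(\foms)^{-(n-k-p+j)}$ for $j\ge p-n+k$, with $\hypf j\Oxi p=\Oxi p$ for the remaining (small) $j$. The predistinguished triangles $\ff{n-k-p+j+1}{n-p}\to\ff{n-k-p+j}{n-p}\tfoms\to\ff{n-k-p+j}{n-k-p+j}\tfosi j$ produced inside the filtration-diagram construction (cf.~\cite{Kovacs05a}*{(2.5.7)}) then twist down to distinguished triangles $\hypf{j+1}\Oxi p\to\hypf j\Oxi p\to\Oxsui{p-j}\tfosi j\to$, which is exactly a hyperfiltration of $\Oxi p$ indexed by $j=0,\dots,k+1$ whose associated graded pieces satisfy $\bG r^j\Oxi p\simeq\Oxsui{p-j}\tfosi j$. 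For the boundary values: $\hypf{k+1}\Oxi p\simeq0$ since the relevant $\ff{n-p+1}{n-p}$ vanishes by \autoref{indstepketfel}; $\hypf 0\Oxi p\simeq\Oxi p$ since $\ff 0{n-p}=\Oxi p$ and $\fosi j=0$ for $j<0$, so the bottom of the chain collapses onto $\Oxi p$; and $\Oxsui r\simeq0$ for $r>n-k$ holds by the definition in \autoref{def:Q}.

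The one genuinely nontrivial ingredient is \autoref{filtrationlemma}: the inductive step passing from the $(p,j)$-data to a $(p,j+1)$-filtration diagram while preserving all of \eqref{indstep}. The obstacle there is twofold — first, building the connecting morphism $\alpha:\ff 1p\tfoms\to\ff 1{p-q}\tfosi{k-q}$ that makes the commutative array \eqref{preegy} close up, which uses \autoref{wedgecomm}; and second, and more seriously, turning the resulting lattice of mapping cones into honest predistinguished triangles. Since one works in the homotopy/derived category, the $3\times3$ lemma is not automatic, so the cones must be matched by repeated, carefully oriented applications of the octahedral axiom (this is the same subtlety flagged in \autoref{rem:Neeman}). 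I would not reprove this step but invoke \cite{Kovacs05a}*{2.5}, as in the present paper the theorem is recalled only as a tool and, once that lemma is in hand, the statement is purely formal.
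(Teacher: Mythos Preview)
Your proposal is correct and matches the paper's approach exactly: the paper presents this theorem as a summary of the preceding observations (the recursive construction via \autoref{filtrationlemma} together with the definitions and predistinguished triangles in \autoref{def:Q}), and you have faithfully unpacked precisely those ingredients, including the boundary checks and the deferral of the nontrivial inductive step to \cite{Kovacs05a}*{2.5}.
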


\begin{proposition}\cite{Kovacs05a}*{2.9}\label{easy}
  Assume that $\theta_X$ is injective. Then if $\Xi_X$, the cokernel of $\theta_X$,
  is locally free, then $\Oxsui p$ is isomorphic to the $p$-th exterior power of
  $\Xi_X$.  The filtration is given by
  \[ 
    \Oxi p=F^0\supset F^1 \supset \dots \supset F^p \supset F^{p+1}=0,
  \] 
  with quotients
  \[ 
    F^j/F^{j+1}\simeq \bigwedge^{p-j}\Xi_X\tfosi j  ,
  \] 
  for each $j$.  
\end{proposition}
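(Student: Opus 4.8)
The hypothesis furnishes a short exact sequence of locally free sheaves
\[
  0\longrightarrow \Phi_X\xrightarrow{\theta_X}\Psi_X\longrightarrow\Xi_X\longrightarrow 0,
\]
and the plan is to show that in the presence of such a sequence the recursive machinery of \eqref{indstep}--\autoref{def:Q} collapses onto the classical Koszul filtration of $\bigwedge^p\Psi_X$. First I would record that classical filtration. Since the sequence splits Zariski-locally, locally $\Psi_X\cong\Phi_X\oplus\Xi_X$ and hence $\bigwedge^p\Psi_X\cong\bigoplus_{j\geq 0}\bigwedge^j\Phi_X\otimes\bigwedge^{p-j}\Xi_X$; setting $F^j$ to be the image of the natural map $\bigwedge^j\Phi_X\otimes\bigwedge^{p-j}\Psi_X\to\bigwedge^p\Psi_X$ (induced by $\bigwedge^j\theta_X$ and the wedge product) gives an intrinsically defined, hence globally well-defined, decreasing filtration $\bigwedge^p\Psi_X=F^0\supset\dots\supset F^{p+1}=0$ by locally free subsheaves with $F^j/F^{j+1}\cong\bigwedge^j\Phi_X\otimes\bigwedge^{p-j}\Xi_X$ canonically, as one checks locally in the splitting. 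This already isolates the last assertion of the proposition, and it remains to identify this filtration with the output of \autoref{spseqegy}.

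The substance is the claim, proved by induction on $i$ following the inductive setup \eqref{indstep}: \emph{under our hypothesis, for every $p$ and $i$ the object $\ff ip(\theta_X)\in\ob(D(X))$ is represented by a locally free sheaf concentrated in degree $0$, and every morphism occurring in an $i$-filtration morphism is a surjection of such sheaves with locally free kernel.} The cases $i=0,1$ are handled directly from the formula of \autoref{wwdef}: in a local splitting $\Psi_X\cong\Phi_X\oplus\Xi_X$ the morphism $\ww q\theta\colon\bigwedge^a\Psi_X\otimes\foms\to\bigwedge^{a+q}\Psi_X\otimes\fosi{k-q}$ annihilates each bigraded summand $\bigwedge^s\Phi_X\otimes\bigwedge^{a-s}\Xi_X\otimes\foms$ with $s>k-q$ purely for degree reasons in $\bigwedge\Phi_X$, while on the remaining summands it restricts to a split map with locally free cokernel; in particular $\ww p\theta$ is surjective, so $\ff 1p=M(\ww p\theta)[-1]$ is its locally free kernel, placed in degree $0$. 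The inductive step is then organizational rather than conceptual: by \autoref{filtrationlemma} the object $\ff{i+1}p$ is built only by forming mapping cones of twists of the maps $\ww q{\theta,i}$ between the $\ff ip$, which by induction are surjections of degree-$0$ locally free sheaves with locally free kernels, so $\ff{i+1}p$ is again a locally free sheaf in degree $0$; the compatibility identities of \autoref{wedgecomm} guarantee that the new filtration morphisms retain the same property.

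Granting the claim, every distinguished triangle
\[
  \dist{\hypf{j+1}\Oxi p}{\hypf j\Oxi p}{\Oxsui{p-j}\tfosi j}
\]
produced in \autoref{def:Q}/\autoref{spseqegy} has all three terms in degree $0$ with its first arrow (split) injective, hence is a short exact sequence of locally free sheaves. Therefore $\hypf\kdot\Oxi p$ is an honest decreasing filtration of $\bigwedge^p\Psi_X$ with $\hypf 0\Oxi p=\bigwedge^p\Psi_X$, $\hypf{k+1}\Oxi p=0$, and successive quotients $\Oxsui{p-j}\otimes\bigwedge^j\Phi_X$. I would then identify this filtration with $F^\kdot$ by descending induction on $j$: unwinding the wedge formulas in a local splitting, the surjection $\hypf j\Oxi p\to\Oxsui{p-j}\otimes\bigwedge^j\Phi_X$ is precisely the classical quotient $F^j\to F^j/F^{j+1}\cong\bigwedge^j\Phi_X\otimes\bigwedge^{p-j}\Xi_X$, which forces $\hypf j\Oxi p=F^j$ and $\Oxsui{p-j}\otimes\bigwedge^j\Phi_X\cong F^j/F^{j+1}$. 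Taking $j=0$ yields $\Oxsui p\cong\bigwedge^p\Xi_X$, and the resulting identification of graded pieces gives $F^j/F^{j+1}\cong\bigwedge^{p-j}\Xi_X\tfosi j$ for all $j$, which is the assertion of the proposition.

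The main obstacle will not be the degeneration to degree $0$---that is immediate once the sequence is split locally---but the bookkeeping required to certify that the recursively produced morphisms really are the classical Koszul maps, so that the two filtrations of $\bigwedge^p\Psi_X$ coincide \emph{as filtered objects} and not merely have isomorphic associated graded sheaves. Keeping track of the numerous twists by $\foms=\det\Phi_X$ and the index shifts built into \eqref{indstep} and \autoref{def:Q} is where care is needed. An alternative presentation that suppresses most of this computation is to observe that the two-term complex $[\Phi_X\xrightarrow{\theta_X}\Psi_X]$ is quasi-isomorphic to $\Xi_X[0]$, that the construction of \cite{Kovacs05a}*{\S 2} is functorial in such two-term complexes, and that it computes their ``$p$-th derived exterior power'' equipped with its b\^ete filtration---which for a locally free sheaf in degree $0$ is just $\bigwedge^p\Xi_X$ with the classical filtration; but making ``derived exterior power'' precise and verifying that the construction computes it amounts to essentially the same work.
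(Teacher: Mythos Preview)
Your approach is correct and is essentially an expanded version of the paper's own argument. The paper's proof is a two-line sketch: it records the base case $\Oxsui{n-k}\simeq\det\Psi_X\otimes(\det\Phi_X)^{-1}\simeq\det\Xi_X$ and then appeals to ``descending induction, the filtration associated to the short exact sequence of locally free sheaves and the distinguished triangle $\hypf{j+1}\Oxi p\to\hypf j\Oxi p\to\Oxsui{p-j}\tfosi j$.'' What you have written is precisely the content that makes this sketch rigorous: your induction on $i$ (showing that each $\ff ip$ is a locally free sheaf in degree~$0$ and that each cone in the recursion is an honest kernel) is exactly what is needed to know that the hyperfiltration $\hypf\kdot\Oxi p$ lives in degree~$0$ and may be compared term-by-term with the classical Koszul filtration; your subsequent descending induction on $j$ is the paper's ``descending induction.''

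The only organizational difference is the starting point. The paper anchors the induction at the top exterior power $\Oxsui{n-k}=\det\Xi_X$, while you anchor it at the bottom of the recursive construction ($i=0,1$) and then climb. Both reach the same place. Your honest remark about the bookkeeping---tracking the $\det\Phi_X$-twists and making sure the recursively produced maps really coincide with the classical Koszul surjections, so that the two filtrations agree as \emph{filtered} objects and not merely on associated gradeds---is well taken; the paper suppresses this entirely, but it is the only point where real care is required, and your local-splitting analysis of $\ww q\theta$ handles it.
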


\begin{proof}
  By definition one has that
  \[
    \Oxsui {n-k}\simeq \det\Psi_X\otimes (\det\Phi_X)^{-1}\simeq \det\Xi_X.
  \]
  Then the statement follows using descending induction, the filtration associated to
  the short exact sequence of locally free sheaves and the distinguished triangle,
  \[
    \dist {\hypf{j+1}\Oxi p}{\hypf j\Oxi p}{\Oxsui {p-j}\tfosi j}.\qedhere
  \]
\end{proof}

\begin{example}
  Let $k=1$, i.e., assume that $\Phi_X$ is a line bundle. Then the hyperfiltration in
  \autoref{spseqegy} is simply a distinguished triangle
  \[
    \dist{\Oxsui {p-1}\otimes\Phi_X}{\Oxi p}{\Oxsui p}.
  \]
  \end{example}

\bibliographystyle{bibliography/skalpha.bst} 
\bibliography{bibliography/general}

\begin{bibdiv}
\begin{biblist}

\bib{CD17}{article}{
      author={Catanese, Fabrizio},
      author={Dettweiler, Michael},
       title={Answer to a question by fujita on variation of hodge structures},
        date={2017},
     journal={Adv. in Pure. Math},
      volume={74},
       pages={73\ndash 102},
}

\bib{CKS}{article}{
      author={Cattani, Eduardo},
      author={Kaplan, Aroldo},
      author={Schmid, Wilfried},
       title={Degeneration of {H}odge structures},
        date={1986},
     journal={Ann. Math.},
      volume={123},
      number={3},
       pages={457\ndash 535},
         url={http://www.jstor.org/stable/1971333},
}

\bib{CP16}{article}{
      author={Campana, Fr{\'e}d{\'e}ric},
      author={P\u{a}un, Mihai},
       title={Foliations with positive slopes and birational stability of
  orbifold cotangent bundles},
        date={2019},
     journal={Publ. Math. IHES},
      volume={129},
       pages={1\ndash 49},
         url={http://aif.cedram.org/item?id=AIF_2015__65_2_835_0},
}

\bib{DuBois81}{article}{
      author={{D}u Bois, Philippe},
       title={Complexe de de {R}ham f{i}ltr{\'e} d'une vari{\'e}t{\'e}
  singuli{\`e}re},
        date={1981},
        ISSN={0037-9484},
     journal={Bull. Soc. Math. France},
      volume={109},
      number={1},
       pages={41\ndash 81},
      review={\MR{613848 (82j:14006)}},
}

\bib{Deligne70}{book}{
      author={Deligne, Pierre},
       title={{\'E}quations diff{\'e}rentielles {\`a} points singuliers
  r{\'e}guliers},
   publisher={Springer-Verlag},
     address={Berlin},
        date={1970},
        note={Lecture Notes in Mathematics, Vol. 163},
      review={\MR{54 \#5232}},
}

\bib{DeligneHodgeII}{article}{
      author={Deligne, Pierre},
       title={Th{\'e}orie de {H}odge. {II}},
        date={1971},
        ISSN={0073-8301},
     journal={Inst. Hautes {\'E}tudes Sci. Publ. Math.},
      number={40},
       pages={5\ndash 57},
      review={\MR{0498551 (58 \#16653a)}},
}

\bib{Elkik78}{article}{
      author={Elkik, Ren{\'e}e},
       title={Singularit{\'e}s rationnelles et d{\'e}formations},
        date={1978},
        ISSN={0020-9910},
     journal={Invent. Math.},
      volume={47},
      number={2},
       pages={139\ndash 147},
      review={\MR{501926 (80c:14004)}},
}

\bib{revI}{incollection}{
      author={Esnault, H{\'e}l{\`e}ne},
      author={Viehweg, Eckart},
       title={Rev\^etements cycliques},
        date={1982},
   booktitle={Algebraic threefolds ({V}arenna, 1981)},
      series={Lecture Notes in Math.},
      volume={947},
   publisher={Springer},
     address={Berlin},
       pages={241\ndash 250},
      review={\MR{672621 (84m:14015)}},
}

\bib{GNPP88}{book}{
      author={Guill{\'e}n, F.},
      author={Navarro{\ }Aznar, V.},
      author={Pascual{\ }Gainza, P.},
      author={Puerta, F.},
       title={Hyperr{\'e}solutions cubiques et descente cohomologique},
      series={Lecture Notes in Mathematics},
   publisher={Springer-Verlag},
     address={Berlin},
        date={1988},
      volume={1335},
        ISBN={3-540-50023-5},
        note={Papers from the Seminar on Hodge-Deligne Theory held in
  Barcelona, 1982},
      review={\MR{MR972983 (90a:14024)}},
}

\bib{Gri68I}{article}{
      author={Griffiths, Phillip~A.},
       title={Periods of integrals on algebraic manifolds, i. (construction and
  properties of the modular varieties)},
        date={1968},
        ISSN={00029327, 10806377},
     journal={American Journal of Mathematics},
      volume={90},
      number={2},
       pages={568\ndash 626},
         url={http://www.jstor.org/stable/2373545},
}

\bib{Gri68II}{article}{
      author={Griffiths, Phillip~A.},
       title={Periods of integrals on algebraic manifolds, ii: (local study of
  the period mapping)},
        date={1968},
        ISSN={00029327, 10806377},
     journal={American Journal of Mathematics},
      volume={90},
      number={3},
       pages={805\ndash 865},
         url={http://www.jstor.org/stable/2373485},
}

\bib{Gri70}{article}{
      author={Griffiths, Phillip~A.},
       title={Periods of integrals on algebraic manifolds, iii (some global
  differential-geometric properties of the period mapping)},
    language={en},
        date={1970},
     journal={Publications Math\'ematiques de l'IH\'ES},
      volume={38},
       pages={125\ndash 180},
         url={www.numdam.org/item/PMIHES_1970__38__125_0/},
      review={\MR{44 \#224}},
}

\bib{Gri84}{book}{
      editor={Griffiths, Phillip},
       title={Topics in transcendental algebraic geometry},
   publisher={Princeton University Press},
        date={1984},
      number={AM-106},
}

\bib{Hartshorne66}{book}{
      author={Hartshorne, Robin},
       title={Residues and duality},
      series={Lecture notes of a seminar on the work of A. Grothendieck, given
  at Harvard 1963/64. With an appendix by P. Deligne. Lecture Notes in
  Mathematics, No. 20},
   publisher={Springer-Verlag},
     address={Berlin},
        date={1966},
      review={\MR{0222093}},
}

\bib{KK08}{article}{
      author={Kebekus, Stefan},
      author={Kov{\'a}cs, S{\'a}ndor~J.},
       title={Families of canonically polarized varieties over surfaces},
        date={2008},
        ISSN={0020-9910},
     journal={Invent. Math.},
      volume={172},
      number={3},
       pages={657\ndash 682},
  note={\href{http://dx.doi.org/10.1007/s00222-008-0128-8}{DOI:10.1007/s00222-008-0128-8}.
  Preprint \href{http://arxiv.org/abs/0707.2054}{arXiv:0707.2054}},
      review={\MR{2393082}},
}

\bib{KK08b}{article}{
      author={Kebekus, Stefan},
      author={Kov{\'a}cs, S{\'a}ndor~J},
       title={Families of varieties of general type over compact bases},
        date={2008},
        ISSN={0001-8708},
     journal={Adv. Math.},
      volume={218},
      number={3},
       pages={649\ndash 652},
  note={\href{http://dx.doi.org/10.1016/j.aim.2008.01.005}{DOI:10.1016/j.aim.2008.01.005}},
      review={\MR{2414316 (2009d:14042)}},
}

\bib{KK10}{article}{
      author={Kebekus, Stefan},
      author={Kov{\'a}cs, S{\'a}ndor~J.},
       title={The structure of surfaces and threefolds mapping to the moduli
  stack of canonically polarized varieties},
        date={2010},
        ISSN={0012-7094},
     journal={Duke Math. J.},
      volume={155},
      number={1},
       pages={1\ndash 33},
         url={http://dx.doi.org/10.1215/00127094-2010-049},
      review={\MR{2730371 (2011i:14060)}},
    }

\bib{KK10b}{article}{
    AUTHOR = {Koll{\'a}r, J{\'a}nos},
      author={Kov{\'a}cs, S{\'a}ndor~J.},
     TITLE = {Log canonical singularities are {D}u {B}ois},
   JOURNAL = {J. Amer. Math. Soc.},
  FJOURNAL = {Journal of the American Mathematical Society},
    VOLUME = {23},
      YEAR = {2010},
    NUMBER = {3},
     PAGES = {791--813},
      ISSN = {0894-0347},
   MRCLASS = {14J17 (14B07 14E30)},
  MRNUMBER = {2629988 (2011m:14061)},
MRREVIEWER = {Ali Sinan Sert{\"o}z},
       DOI = {10.1090/S0894-0347-10-00663-6},
       URL = {http://dx.doi.org/10.1090/S0894-0347-10-00663-6},
}

\bib{KO68}{article}{
      author={Katz, Nicholas~M.},
      author={Oda, Tadao},
       title={On the differentiation of de {R}ham cohomology classes with
  respect to parameters},
        date={1968},
        ISSN={0023-608X},
     journal={J. Math. Kyoto Univ.},
      volume={8},
       pages={199\ndash 213},
         url={https://doi.org/10.1215/kjm/1250524135},
}

\bib{KollarSingsOfTheMMP}{book}{
      author={Koll{\'a}r, J{\'a}nos},
       title={Singularities of the minimal model program},
      series={Cambridge Tracts in Mathematics},
   publisher={Cambridge University Press, Cambridge},
        date={2013},
      volume={200},
        ISBN={978-1-107-03534-8},
         url={http://dx.doi.org/10.1017/CBO9781139547895},
        note={With a collaboration of S{\'a}ndor Kov{\'a}cs},
      review={\MR{3057950}},
}

\bib{Kovacs00a}{article}{
      author={Kov{\'a}cs, S{\'a}ndor~J},
       title={Algebraic hyperbolicity of f{i}ne moduli spaces},
        date={2000},
        ISSN={1056-3911},
     journal={J. Algebraic Geom.},
      volume={9},
      number={1},
       pages={165\ndash 174},
      review={\MR{1713524 (2000i:14017)}},
}

\bib{Kovacs02}{article}{
      author={Kov{\'a}cs, S{\'a}ndor~J},
       title={Logarithmic vanishing theorems and {A}rakelov-{P}arshin
  boundedness for singular varieties},
        date={2002},
        ISSN={0010-437X},
     journal={Compositio Math.},
      volume={131},
      number={3},
       pages={291\ndash 317},
      review={\MR{2003a:14025}},
}

\bib{Kovacs03b}{article}{
      author={Kov{\'a}cs, S{\'a}ndor~J},
       title={Viehweg's conjecture for families over {$\Bbb P\sp n$}},
        date={2003},
        ISSN={0092-7872},
     journal={Comm. Algebra},
      volume={31},
      number={8},
       pages={3983\ndash 3991},
        note={Special issue in honor of Steven L. Kleiman},
      review={\MR{2007392 (2004h:14038)}},
}

\bib{Kovacs05a}{incollection}{
      author={Kov{\'a}cs, S{\'a}ndor~J},
       title={Spectral sequences associated to morphisms of locally free
  sheaves},
        date={2005},
   booktitle={Recent progress in arithmetic and algebraic geometry},
      series={Contemp. Math.},
      volume={386},
   publisher={Amer. Math. Soc.},
     address={Providence, RI},
       pages={57\ndash 85},
      review={\MR{2182771 (2006j:14016)}},
}

\bib{MR2918171}{article}{
      author={Kov{\'a}cs, S{\'a}ndor~J},
       title={Irrational centers},
        date={2011},
        ISSN={1558-8599},
     journal={Pure Appl. Math. Q.},
      volume={7},
      number={4, Special Issue: In memory of Eckart Viehweg},
       pages={1495\ndash 1515},
      review={\MR{2918171}},
}

\bib{Kov13}{book}{
      author={Kov{\'a}cs, S{\'a}ndor~J},
       title={Singularities of stable varieties},
      series={Handbook of moduli. Adv. Lect. Math. (ALM), 25},
   publisher={Int. Press, Somerville, MA},
        date={2013},
      volume={II},
         url={http://dx.doi.org/10.1007/978-94-017-0717-6},
}

\bib{Kovacs96}{article}{
      author={Kov{\'a}cs, S{\'a}ndor~J},
       title={Smooth families over rational and elliptic curves},
        date={1996},
        ISSN={1056-3911},
     journal={J. Algebraic Geom.},
      volume={5},
      number={2},
       pages={369\ndash 385},
      review={\MR{1374712 (97c:14035)}},
}

\bib{Kovacs97b}{incollection}{
      author={Kov{\'a}cs, S{\'a}ndor~J},
       title={Relative de {R}ham complex for non-smooth morphisms},
        date={1997},
   booktitle={Birational algebraic geometry ({B}altimore, {MD}, 1996)},
      series={Contemp. Math.},
      volume={207},
   publisher={Amer. Math. Soc.},
     address={Providence, RI},
       pages={89\ndash 100},
      review={\MR{1462927 (98f:14014)}},
}

\bib{Kovacs-Schwede11}{incollection}{
      author={Kov{\'a}cs, S{\'a}ndor~J.},
      author={Schwede, Karl~E.},
       title={Hodge theory meets the minimal model program: a survey of log
  canonical and {D}u {B}ois singularities},
        date={2011},
   booktitle={Topology of stratified spaces},
      series={Math. Sci. Res. Inst. Publ.},
      volume={58},
   publisher={Cambridge Univ. Press},
     address={Cambridge},
       pages={51\ndash 94},
      review={\MR{2796408 (2012k:14003)}},
}

\bib{Laz04-I}{book}{
      author={Lazarsfeld, Robert},
       title={Positivity in algebraic geometry. {I}},
      series={Ergebnisse der Mathematik und ihrer Grenzgebiete. 3. Folge. A
  Series of Modern Surveys in Mathematics [Results in Mathematics and Related
  Areas. 3rd Series. A Series of Modern Surveys in Mathematics]},
   publisher={Springer-Verlag},
     address={Berlin},
        date={2004},
      volume={48},
        ISBN={3-540-22533-1},
        note={Classical setting: line bundles and linear series},
      review={\MR{2095471 (2005k:14001a)}},
}

\bib{Man65}{article}{
      author={Manin, Yuri},
       title={Moduli fuchsiani},
        date={1965},
     journal={Ann. Scuola Norm. Sup. Pisa (3)},
      volume={19},
       pages={113\ndash 126},
}

\bib{MR2310103}{article}{
      author={Mochizuki, Takuro},
       title={Kobayashi-{H}itchin correspondence for tame harmonic bundles and
  an application},
        date={2006},
        ISSN={0303-1179},
     journal={Astérisque},
      number={309},
       pages={viii+117},
      review={\MR{2310103}},
}

\bib{MR2283665}{article}{
      author={Mochizuki, Takuro},
       title={Asymptotic behaviour of tame harmonic bundles and an application
  to pure twistor {$D$}-modules. {II}},
        date={2007},
        ISSN={0065-9266},
     journal={Mem. Amer. Math. Soc.},
      volume={185},
      number={870},
       pages={xii+565},
         url={https://doi.org/10.1090/memo/0870},
        note={\href{https://doi.org/10.1090/memo/0870}{DOI:10.1090/memo/0870}},
      review={\MR{2283665}},
}

\bib{MR1106349}{article}{
      author={Neeman, Amnon},
       title={Some new axioms for triangulated categories},
        date={1991},
        ISSN={0021-8693},
     journal={J. Algebra},
      volume={139},
      number={1},
       pages={221\ndash 255},
         url={https://doi.org/10.1016/0021-8693(91)90292-G},
      review={\MR{1106349}},
}

\bib{Oguiso-Viehweg01}{article}{
      author={Oguiso, Keiji},
      author={Viehweg, Eckart},
       title={On the isotriviality of families of elliptic surfaces},
        date={2001},
        ISSN={1056-3911},
     journal={J. Algebraic Geom.},
      volume={10},
      number={3},
       pages={569\ndash 598},
      review={\MR{1832333 (2002d:14054)}},
}

\bib{MR2871152}{article}{
      author={Patakfalvi, {\relax Zs}olt},
       title={Viehweg's hyperbolicity conjecture is true over compact bases},
        date={2012},
        ISSN={0001-8708},
     journal={Adv. Math.},
      volume={229},
      number={3},
       pages={1640\ndash 1642},
         url={http://dx.doi.org/10.1016/j.aim.2011.12.013},
      review={\MR{2871152 (2012m:14072)}},
}

\bib{PetersSteenbrinkBook}{book}{
      author={Peters, Chris A.~M.},
      author={Steenbrink, Joseph H.~M.},
       title={Mixed {H}odge structures},
      series={Ergebnisse der Mathematik und ihrer Grenzgebiete. 3. Folge. A
  Series of Modern Surveys in Mathematics [Results in Mathematics and Related
  Areas. 3rd Series. A Series of Modern Surveys in Mathematics]},
   publisher={Springer-Verlag},
     address={Berlin},
        date={2008},
      volume={52},
        ISBN={978-3-540-77015-2},
      review={\MR{2393625}},
}

\bib{PS15}{article}{
      author={Popa, M.},
      author={Schnell, Christian},
       title={Viehweg's hyperbolicity conjecture for families with maximal
  variation},
        date={2017},
     journal={Invent. Math},
      volume={208},
      number={3},
       pages={677\ndash 713},
}

\bib{MR1047415}{article}{
      author={Saito, Morihiko},
       title={Mixed {H}odge modules},
        date={1990},
        ISSN={0034-5318},
     journal={Publ. Res. Inst. Math. Sci.},
      volume={26},
      number={2},
       pages={221\ndash 333},
         url={https://doi.org/10.2977/prims/1195171082},
      review={\MR{1047415}},
}

\bib{Sch73}{article}{
      author={Schmid, Wilfried},
       title={Variation of {H}odge structures: singularities of the period
  mapping},
        date={1973},
     journal={Invent. Math.},
      volume={22},
       pages={211\ndash 319},
         url={http://www.jstor.org/stable/1971333},
}

\bib{MR1040197}{article}{
      author={Simpson, Carlos~T.},
       title={Harmonic bundles on noncompact curves},
        date={1990},
        ISSN={0894-0347},
     journal={J. Amer. Math. Soc.},
      volume={3},
      number={3},
       pages={713\ndash 770},
         url={http://dx.doi.org/10.2307/1990935},
        note={\href{http://dx.doi.org/10.2307/1990935}{DOI:10.2307/1990935}},
      review={\MR{1040197}},
}

\bib{MR1179076}{article}{
      author={Simpson, Carlos~T.},
       title={Higgs bundles and local systems},
        date={1992},
        ISSN={0073-8301},
     journal={Inst. Hautes {\'E}tudes Sci. Publ. Math.},
      volume={75},
       pages={5\ndash 95},
         url={http://www.numdam.org/item?id=PMIHES_1992__75__5_0},
      review={\MR{1179076 (94d:32027)}},
}

\bib{Ste76}{article}{
      author={Steenbrink, Joseph},
       title={Limits of {H}odge structures},
        date={1975/76},
     journal={Invent. Math.},
      volume={31},
      number={3},
       pages={229\ndash 257},
      review={\MR{0429885 (55 \#2894)}},
}

\bib{Steen76}{article}{
      author={Steenbrink, Joseph~H.M.},
       title={Mixed {H}odge structures on the vanishing cohomology},
        date={1976},
       pages={525\ndash 563},
        note={Real and complex singularities (Oslo)},
}

\bib{Taj18}{article}{
      author={Taji, Behrouz},
       title={On the {K}odaira dimension of base spaces of families of
  manifolds},
        date={2018},
     journal={J. Pure Appl. Algebra},
         url={https://arxiv.org/abs/1809.05616},
        note={To appear. Preprint
  \href{https://arxiv.org/abs/1809.05616}{arXiv:1809.05616}},
}

\bib{Taj20}{article}{
      author={Taji, Behrouz},
       title={Birational geometry of smooth families of varieties admitting
  good minimal models},
        date={2020},
        note={Preprint
  \href{https://arxiv.org/abs/2005.01025}{arxiv.org/abs/2005.01025}},
}

\bib{Viehweg95}{book}{
      author={Viehweg, E.},
       title={Quasi-projective moduli for polarized manifolds},
      series={Ergebnisse der Mathematik und ihrer Grenzgebiete (3)},
   publisher={Springer-Verlag},
     address={Berlin},
        date={1995},
      volume={30},
        ISBN={3-540-59255-5},
      review={\MR{1368632 (97j:14001)}},
}

\bib{Vie-Zuo01}{article}{
      author={Viehweg, Eckart},
      author={Zuo, Kang},
       title={On the isotriviality of families of projective manifolds over
  curves},
        date={2001},
        ISSN={1056-3911},
     journal={J. Algebraic Geom.},
     volume={10},
      number={4},
       pages={781\ndash 799},
      review={\MR{1838979 (2002g:14012)}},
}

\bib{VZ02}{incollection}{
      author={Viehweg, Eckart},
      author={Zuo, K.},
       title={Base spaces of non-isotrivial families of smooth minimal models},
        date={2002},
   booktitle={Complex geometry (g{\"o}ttingen, 2000)},
   publisher={Springer},
     address={Berlin},
       pages={279\ndash 328},
      review={\MR{1922109 (2003h:14019)}},
}

\bib{Vie-Zuo03a}{article}{
      author={Viehweg, Eckart},
      author={Zuo, Kang},
       title={On the {B}rody hyperbolicity of moduli spaces for canonically
  polarized manifolds},
        date={2003},
        ISSN={0012-7094},
     journal={Duke Math. J.},
      volume={118},
      number={1},
       pages={103\ndash 150},
      review={\MR{1978884 (2004h:14042)}},
}

\bib{Zuo00}{article}{
      author={Zuo, Kang},
       title={On the negativity of kernels of {K}odaira-{S}pencer maps on
  {H}odge bundles and applications},
        date={2000},
        ISSN={1093-6106},
     journal={Asian J. Math.},
      volume={4},
      number={1},
       pages={279\ndash 301},
        note={Kodaira's issue},
      review={\MR{1803724 (2002a:32011)}},
}

\bib{KS13}{article}{
      author={Kov{\'a}cs, S{\'a}ndor~J.},
      author={Schwede, Karl~E.},
      TITLE = {Inversion of adjunction for rational and {D}u~{B}ois pairs},
   JOURNAL = {Algebra Number Theory},
  FJOURNAL = {Algebra \& Number Theory},
    VOLUME = {10},
      YEAR = {2016},
    NUMBER = {5},
     PAGES = {969-1000},
   MRCLASS = {14J17 (14E99, 14J10, 14D06, 14B05)},
       DOI = {10.2140/ant.2016.10.969},
       URL = {http://dx.doi.org/10.2140/ant.2016.10.969},
}

\end{biblist}
\end{bibdiv}

\end{document}


It was observed by Viehweg that for a projective morphism $f:X\to B$ of smooth
projective varieties $X$ and $B$ with connected fibers, the positivity in direct
image sheaves $f_*\omega^m_{X/B}$ is closely related to the positivity of
$\omega_B(D)$, with $D$ being the discriminant locus of $f$. Specifically, he
conjectured that bigness of $\det f_*\omega^m_{X/B}$ (assuming that it is preserved
under semistable reduction in codimension one) implies bigness of
$\omega_B(D)$.\!\footnote{The original conjecture was formulated in terms of
  variation, when smooth fibers are canonically polarized.}  This conjecture
generated considerable interest and for several years. It was finally resolved, and
in fact generalized, through the culmination of work of several authors including
\cite{Kovacs96}, \cite{Kovacs00a}, \cite{Oguiso-Viehweg01}, \cite{Vie-Zuo01},
\cite{Kovacs02}, \cite{VZ02}, \cite{Kovacs03b}, \cite{KK08},\cite{KK08b},
\cite{KK10}, \cite{MR2871152}, \cite{PS15}, \cite{Taj18} and \cite{CP16}.

In higher dimensions, the minimal model program taught us that when 
 positivity of canonical sheaves are involved, it is desirable to try to extend results
to mildly singular cases. So, it is natural to ask whether Viehweg's conjecture
extends to families of minimal models. 
The simple answer is that the desired positivity fails already
if one allows Gorenstein terminal singularities, arguably the mildest possible. In
particular, the conjecture fails for Lefschetz pencils,
cf.~\ref{sssec:order-poles-lefsch}.

This could be interpreted as a sign that there is no reasonable generalization of
Viehweg's conjecture to singular families. However, here we offer a potential way to
remedy the situation. More particularly, as an application of \autoref{thm:DBI'}, we
show that for Gorenstein families it is possible to obtain a result similar to
Viehweg's conjecture. This requires that we take into account how singular the family
is. More precisely, we show that if $\det f_*\omega_{X/B}$ is positive \emph{enough}
to balance the \emph{discrepancy} of the family (discussed above), then the base of
the family is indeed necessarily of general type. Notice that this is in fact a
direct generalization of Viehweg's conjecture, i.e., if $f$ is smooth, then
\autoref{thm:Cons} below reduces to Viehweg's conjecture.